\newtheorem*{theo*}{Theorem}
\newtheorem{theo}{Theorem}
\newtheorem{lemma}{Lemma}[section]
\newtheorem{coro}[theo]{Corollary}
\newtheorem{definition}[lemma]{Definition}
\newtheorem{prop}[lemma]{Proposition}
\newtheorem{claim}[lemma]{Claim}
\newtheorem{facts}{Fact}
\newtheorem{fact}[facts]{Fact}
\newtheorem{remark}[lemma]{Remark}
\renewenvironment{proof}[1][\proofname]
{\par\pushQED{\qed}
	\normalfont\topsep6\p@\@plus6\p@\relax\trivlist
	\item[\hskip\labelsep\bfseries#1\@addpunct{.}]
	\ignorespaces}
{\popQED\endtrivlist\@endpefalse}
\newcommand{\F}{\mathcal{F}}
\renewcommand{\S}{\mathcal{S}}
\newcommand{\N}{\mathbb{N}}
\newcommand{\Ex}{\mathbb{E}}
\newcommand{\ith}{i\text{-th}}
\newcommand{\norm}[1]{\left\lVert #1 \right\rVert}
\newcommand{\floor}[1]{\left\lfloor{#1}\right\rfloor}
\newcommand{\ceil}[1]{\left\lceil #1 \right\rceil}
\renewcommand{\P}{\mathcal{P}}
\renewcommand{\Pr}{\mathbb{P}}
\newcommand{\Q}{\mathcal{Q}}
\newcommand{\R}{\mathcal{R}}
\newcommand{\E}{\mathcal{E}}
\newcommand{\A}{\mathcal{A}}
\newcommand{\B}{\mathcal{B}}
\newcommand{\C}{\mathcal{C}}
\newcommand{\T}{\mathcal{T}}
\newcommand{\V}{\mathcal{V}}
\DeclareMathOperator{\poly}{poly}
\DeclareMathOperator{\Csub}{\Psi}
\newcommand{\tower}[2]{\twr({#1},\,{#2})}
\DeclareMathOperator{\twr}{T}
\DeclareMathOperator{\wow}{W}
\renewcommand{\a}{\alpha}
\renewcommand{\b}{\beta}
\renewcommand{\d}{\delta}
\newcommand{\g}{\gamma}
\renewcommand{\L}{\mathcal{L}}
\newcommand{\sub}{\subseteq}
\newcommand{\sm}{\setminus}
\newcommand{\e}{\epsilon}
\newcommand{\Z}{\mathcal{Z}}
\renewcommand{\l}{\ell}
\newcommand{\HH}{\mathcal{H}}
\newcommand{\X}{\mathcal{X}}
\newcommand{\Y}{\mathcal{Y}}
\newcommand{\D}{\mathcal{D}}
\newcommand{\K}{\mathcal{K}}
\newcommand{\G}{\mathcal{G}}
\newcommand{\I}{\mathcal{I}}
\newcommand{\h}[1]{\widehat{#1}}
\renewcommand{\c}[1]{{#1}}
\newcommand{\Aside}{\mathbf{A}}
\newcommand{\Bside}{\mathbf{B}}
\newcommand{\Cside}{\mathbf{C}}
\newcommand{\Lside}{\mathbf{L}}
\newcommand{\Rside}{\mathbf{R}}
\newcommand{\Hy}[1]{H}
\DeclareMathOperator{\Exp}{\mathbb{E}}
\DeclareMathOperator{\codeg}{codeg}
\DeclareMathOperator{\Var}{Var}
\DeclareMathOperator{\dist}{dist}
\DeclareMathOperator{\Ack}{Ack}
\DeclareMathOperator{\Cross}{Cross}
\renewcommand{\k}{\kappa}
\newcommand{\Vside}{\mathbf{V}}
\DeclareMathOperator{\under}{U}
\date{}
\title{A Tight Bound for Hypergraph Regularity II}
\author{Guy Moshkovitz\thanks{School of Mathematics, Tel Aviv University, Tel Aviv 69978, Israel.  Email: {\tt guymosko@tau.ac.il}. Supported in part by ERC Starting Grant 633509.}
\and Asaf Shapira\thanks{School of Mathematics, Tel Aviv University, Tel Aviv 69978, Israel.
Email: {\tt asafico$@$tau.ac.il}. Supported in part by ISF Grant 1028/16 and ERC Starting Grant 633509.}}
\begin{document}

\maketitle
\begin{abstract}
The hypergraph regularity lemma -- the extension of Szemer\'edi's graph regularity lemma to the setting of $k$-uniform hypergraphs -- is one of the most celebrated combinatorial results obtained in the past decade.
By now there are several (very different) proofs of this lemma, obtained by Gowers, by Nagle-R\"odl-Schacht-Skokan and by Tao.
Unfortunately, what all these proofs have in common is that they yield regular partitions whose order is given by the $k$-th Ackermann function.

In a recent paper we have shown that these bounds are unavoidable for $3$-uniform hypergraphs. 
In this paper we extend this result by showing that such Ackermann-type bounds are unavoidable for every $k \ge 2$, thus confirming a prediction of Tao.
\end{abstract}

\section{Introduction}

One of the most surprising applications of Szemer\'edi's regularity lemma is the proof of Roth's theorem~\cite{Roth54}. 
This ingenuous proof, due to Ruzsa and Szemer\'edi~\cite{RuzsaSz76}, implicitly relies on what is now known as the
{\em triangle removal lemma}. Erd\H{o}s, Frankl and R\"odl~\cite{ErdosFrRo86} asked if this lemma can be extended
to the setting of $k$-uniform hypergraphs, and Frankl and R\"odl~\cite{FrankRo02} observed that such a result would allow one to extend the Ruzsa--Szemer\'edi~\cite{RuzsaSz76} argument and thus obtain an alternative proof of Szemer\'edi's theorem~\cite{Szemeredi75} for progressions of arbitrary length (see also~\cite{Solymosi04}). Frankl and R\"odl~\cite{FrankRo02} further initiated a programme for proving such a hypergraph removal lemma
via a {\em hypergraph regularity lemma} and proved such a lemma for $3$-uniform hypergraphs. This task was completed only 10 years later when R\"odl, Skokan, Nagle and Schacht~\cite{NagleRoSc06, RodlSk04} and independently Gowers~\cite{Gowers07} obtained regularity lemmas $k$-uniform hypergraphs (from now on we will use $k$-graphs instead of $k$-uniform hypergraphs).
Shortly after, Tao~\cite{Tao06} and R\"odl and Schacht~\cite{RodlSc07,RodlSc07-B} obtained two more versions of the lemma.

The above-mentioned variants of the hypergraph regularity lemma relied on four different notions of {\em quasi-randomness}, which
are not known to be equivalent; see R\"odl's recent ICM survey~\cite{Rodl14} and our recent paper \cite{MS3} for more on this. 
What all of these proofs {\em do} have in common however, is that they supply only Ackermann-type bounds for the size of a regular partition. More precisely,
if we let $\Ack_1(x)=2^x$ and then define $\Ack_k(x)$ to be the $x$-times iterated\footnote{So $\Ack_2(x)$ is a tower of exponents of height $x$, i.e.\ the tower function, $\Ack_3(x)$ is the so-called wowzer function, etc.} version of $\Ack_{k-1}$, then all the above proofs guarantee to produce a regular partition of a $k$-graph whose order can be bounded from above by an $\Ack_k$-type function.
Gowers~\cite{Gowers97} famously proved that $\Ack_2$-type upper bounds for graph regularity
are unavoidable. Tao~\cite{Tao06-h} predicted that Gowers's result can be extended to the setting
of $k$-graphs, that is, that $\Ack_k$-type bounds are unavoidable for the $k$-graph regularity lemma.

Until very recently, no analogue of Gowers's lower bound was known for any $k>2$.
In a recent paper \cite{MS3} we obtained such a result for the $3$-graph regularity lemma.
We refer the reader to Section $1$ of \cite{MS3} for a thorough discussion of this result
and some of the key ideas behind it. 
In the present paper we extend this result to arbitrary $k \geq 3$, thus conforming
Tao's prediction~\cite{Tao06-h}. Our main result can be informally stated as follows.


\begin{theo}{\bf[Main result, informal statement]}\label{thm:main-informal}
The following holds for every $k\geq 2$: every regularity lemma for $k$-graphs satisfying some
mild conditions can only guarantee to produce partitions of size bounded by an $\Ack_k$-type function.
\end{theo}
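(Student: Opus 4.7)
The plan is to prove Theorem \ref{thm:main-informal} by strong induction on $k$. The base case $k=2$ is Gowers's theorem \cite{Gowers97}, and the case $k=3$ is the main result of our companion paper \cite{MS3}. For the inductive step, the strategy is to lift the construction from \cite{MS3} into a recursive scheme: a $(k-1)$-graph $H_{k-1}$ witnessing the $\Ack_{k-1}$-lower bound is used as a building block to produce a $k$-graph $H_k$ witnessing the $\Ack_k$-lower bound.

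First, I would formalize what ``mild conditions'' means for a $k$-graph regularity lemma. The output should be a hierarchical partition consisting of a vertex partition $\mathcal{P}_1$, a partition $\mathcal{P}_2$ of the pairs spanned by $\mathcal{P}_1$, a partition $\mathcal{P}_3$ of the triangles supported on $\mathcal{P}_2$, and so on up to a partition $\mathcal{P}_{k-1}$ of $(k-1)$-cliques, with $\epsilon$-regularity of the $k$-graph on top and suitable equitability/refinement conditions at every level. Second, I would construct $H_k$ as an iterated amplification of $H_{k-1}$: along $n$ nested scales, I would embed fresh copies of $H_{k-1}$ so that any regular partition of $H_k$ is forced to progressively refine until each planted copy is resolved. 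Invoking the inductive hypothesis at each scale incurs a multiplicative $\Ack_{k-1}$-type blow-up, and iterating $n$ times yields the required $\Ack_k(n)$-type lower bound, mirroring Gowers's doubling scheme in the $k=2$ case but with doubling replaced by a full $\Ack_{k-1}$-factor jump at every stage.

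The main obstacle, I expect, is proving that a regular partition of $H_k$ genuinely decodes into a regular partition of each planted $H_{k-1}$-copy, despite the substantial flexibility hierarchical hypergraph partitions enjoy. Concretely, the partitions $\mathcal{P}_j$ need not respect the scale structure of our construction: the top-level partition $\mathcal{P}_{k-1}$ of $(k-1)$-cliques could ``slice'' the planted copies in unintended ways, and lower-level partitions could mix tuples that were designed to be separated. To handle this, one needs a rigidity/stability statement asserting that any $\epsilon$-regular partition of $H_k$ must, up to small error, refine the nested hierarchy of scales we built in. The $k=3$ case of \cite{MS3} established such rigidity at a single layer by a delicate analysis of $3$-graph regularity; the challenge here is to propagate the argument up through all $k-1$ layers, tracking the correlated regularity structure at each layer with quantitative control tight enough to feed cleanly into the induction.
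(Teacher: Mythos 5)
Your high-level strategy---induction on $k$, using the level-$(k-1)$ lower bound construction as the engine for the level-$k$ one---is indeed the skeleton of the paper's argument, and your identification of the ``decoding'' step as the crux is on target. But as a plan it has two gaps that are not just technical, and a third structural mismatch worth flagging.

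First, the rigidity statement you propose to prove---that \emph{any} $\epsilon$-regular partition of $H_k$ must refine the nested hierarchy of scales---is too strong to be true in the regime you need. The paper's core graph lemma (Lemma~\ref{theo:core}) is deliberately \emph{one-sided}: one may only conclude that $\P$ refines $\L_i$ \emph{under the hypothesis} that $\Q$ already refines $\R_i$. This is unavoidable: since the inductive step must be run with partition sequences whose sizes already grow at Ackermann rate, a two-sided version would yield an Ackermann-type lower bound for graph $\langle\d\rangle$-regularity and hence for Szemer\'edi's lemma, contradicting the tower upper bound. The paper therefore proves only a one-sided implication (Lemma~\ref{lemma:ind-k}: control on $V_2(\P),\ldots,V_k(\P)$ forces control on $V_1(\P)$ one level deeper) and then removes the hypothesis by pasting $2k$ rotated copies of the one-sided construction along the edges of a tight $2k$-cycle, so that a ``first coordinate to deviate'' argument closes the circle of implications. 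Without some such device your plan stalls exactly at the obstacle you name. Second, with $\epsilon$-regularity the induction does not close: the error parameter degrades at each level, whereas the paper's whole point is to work with a notion ($\langle\d\rangle$-regularity) in which the lower bound is driven by the \emph{density} of the hypergraph at a \emph{fixed} error parameter $\d$; the density is what gets amplified across levels. Relatedly, the base case cannot be Gowers's theorem used as a black box---the induction needs the full sequence-of-refinements structure of Lemma~\ref{theo:core}, not merely the existence of one hard graph.

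Finally, the mechanism differs from your ``planted copies at nested scales'' picture. The paper does not embed $(k-1)$-graphs $H_{k-1}$ inside $H_k$. Rather, the induction hypothesis at level $k-1$ supplies a sequence of \emph{partitions} $\F_1\succ\cdots\succ\F_{s'}$ of the complete $(k-1)$-partite $(k-1)$-graph into hard pieces, and these partitions are then fed as the left-hand refinement targets into a single bipartite application of Lemma~\ref{theo:core} on the vertex classes $(\Vside^1\times\cdots\times\Vside^{k-1},\,\Vside^k)$; the resulting bipartite graph is reinterpreted as a $k$-graph via $G=G_H^k$. The chain is: regularity of $\P$ for $H$ gives regularity of $E_k(\P)\cup V_k(\P)$ for $G$, the core lemma forces $E_k(\P)$ to approximately refine some $\F_{(j')}$, and Claim~\ref{claim:uniform-refinement} converts this into a $\langle\d_{k-1}\rangle$-regular partition of a single $F\in\F_{(j')}$, at which point the induction hypothesis applies once. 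The $\Ack_k$ growth then comes from composing the index functions ($\A_k(i+1)=\A_{k-1}(\A_k^*(i))$), which is the precise form of your ``$\Ack_{k-1}$-jump at every stage'' intuition.
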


Our main result, stated formally as Theorem~\ref{theo:main}, establishes an $\Ack_k$-type lower bound
for \emph{$\langle \d \rangle$-regularity} of $k$-graphs, which is a new notion we first introduced in~\cite{MS3}. 
The main advantage of this notion is threefold:
$(i)$ It is much simpler to state compared to all other notions of $k$-graph regularity.
$(ii)$ It is weak enough to allow one to  induct on $k$, that is, to use lower bounds for $k$-graph regularity in order to obtain lower bounds for $(k+1)$-regularity\footnote{See Section $1$ of \cite{MS3} for a more detailed discussion on this aspect of the proof.}.
$(iii)$ All known notions of regularity appear to be stronger than $\langle \d \rangle$-regularity, so a lower bound
for $\langle \d \rangle$-regularity gives a lower bound for such lemmas, that is, for any lemma
whose requirements/guarantees imply those that are needed in order to satisfy $\langle \d \rangle$-regularity.

We will demonstrate the effectiveness of item~$(iii)$ above by deriving from Theorem~\ref{thm:main-informal} a lower bound for 
the $k$-graph regularity lemma of R\"odl--Schacht~\cite{RodlSc07}.

\begin{coro}[Lower bound for $k$-graph regularity]\label{coro:RS-LB}
For every $k\geq 2$, there is an $\Ack_k$-type lower bound for the $k$-graph regularity lemma of R\"odl--Schacht~\cite{RodlSc07}.
\end{coro}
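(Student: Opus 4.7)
The plan is to reduce the lower bound for the Rödl--Schacht $k$-graph regularity lemma to the lower bound for $\langle\d\rangle$-regularity furnished by Theorem~\ref{thm:main-informal} (in its formal incarnation, Theorem~\ref{theo:main}). The strategy follows the template advertised in item~$(iii)$ of the introduction: since $\langle\d\rangle$-regularity is, by design, weaker than every standard notion of $k$-graph regularity, it suffices to exhibit an explicit translation showing that a Rödl--Schacht regular partition automatically yields a $\langle\d\rangle$-regular partition of no greater complexity. Any Ackermann-type upper bound for the former would then contradict Theorem~\ref{theo:main}.

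First, I would recall the statement of the Rödl--Schacht lemma from~\cite{RodlSc07} in detail, highlighting the structure it produces: a vertex partition together with a hierarchy of partitions of the edge sets of all intermediate uniformities $2,\ldots,k-1$, equipped with $\d$-regularity of the $k$-graph relative to its $(k-1)$-uniform supporting polyads together with suitable regularity conditions on the lower levels. Fix a function $\d\mapsto\Ack_k(\poly(1/\d))$ and let $H$ be the hard $k$-graph provided by Theorem~\ref{theo:main}, whose every $\langle\d\rangle$-regular partition has complexity at least this large. The idea is to run the Rödl--Schacht lemma on $H$ with parameters $(\d_{\mathrm{RS}},r(\cdot),t_0)$ chosen as suitable functions of $\d$, so that the Rödl--Schacht conclusions at the top level imply the single unconditional quantitative estimate that defines $\langle\d\rangle$-regularity in~\cite{MS3}.

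The second step is to show that this translation is tight on complexity: after forgetting the internal hierarchical structure and keeping only the partition of $k$-tuples induced by the top-level polyads, the resulting partition is $\langle\d\rangle$-regular, and its order is bounded in terms of the Rödl--Schacht complexity. Consequently, any Ackermann-type bound of height less than $k$ on the Rödl--Schacht complexity would produce a $\langle\d\rangle$-regular partition of $H$ of complexity below $\Ack_k(\poly(1/\d))$, contradicting Theorem~\ref{theo:main}. This yields the desired $\Ack_k$-type lower bound for Rödl--Schacht.

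The main obstacle, as is typical for reductions of this kind, is bookkeeping of the parameters. The Rödl--Schacht lemma involves a vector parameter, including a function $r(\cdot)$ governing the number of polyad witnesses and densities at each uniformity level, and one must choose these carefully so that the top-level regularity is strong enough to imply the unconditional $\langle\d\rangle$-regularity bound rather than merely a conditional version depending on side data such as the densities of the lower-order polyads. Handling the auxiliary requirements --- in particular verifying that the polyad hierarchy does not spoil the counting used to define $\langle\d\rangle$-regularity, and that one can pass from the refined partition of $k$-tuples provided by Rödl--Schacht to a partition meeting the precise format of $\langle\d\rangle$-regularity --- is the technical core. Once this translation is set up, the Ackermann-type lower bound transfers verbatim from Theorem~\ref{theo:main} to the Rödl--Schacht lemma, establishing Corollary~\ref{coro:RS-LB}.
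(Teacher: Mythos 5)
Your proposal is correct and follows essentially the same route as the paper: one shows that any R\"odl--Schacht $\e$-regular, $f$-equitable partition of the hard $k$-graph $H$ supplied by Theorem~\ref{theo:main} is automatically a $\langle 2^{-16^k}\rangle$-regular partition in the sense of Definition~\ref{def:k-reg} (this translation is the content of Claim~\ref{claim:k-reduction}, whose proof via the dense counting and slicing lemmas for regular complexes is precisely the ``bookkeeping'' you defer), and then invokes Theorem~\ref{theo:main}. The one inaccuracy is your parametrization: Theorem~\ref{theo:main} gives hardness for a \emph{fixed} error parameter, with the bound $\Ack_k(s)$ growing in terms of the density $2^{-s-k}$ of $H$ rather than in $\poly(1/\d)$, so after imposing $\e\le c\cdot 2^{-s-k}$ the resulting R\"odl--Schacht lower bound takes the form $\Ack_k(\log 1/\e)$ --- still $\Ack_k$-type, so the corollary is unaffected.
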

As we discuss at the beginning of Section~\ref{sec:coro}, the lower bound stated in Corollary~\ref{coro:RS-LB} holds even for
a very weak/special case of the $k$-graph regularity lemma of~\cite{RodlSc07}.

As Theorem \ref{thm:main-informal} establishes a lower bound for
$\langle \d \rangle$-regularity, it is natural to ask if this notion is in fact equivalent to other notions. In particular,  is this notion strong enough for ``counting'', that is,
for proving the hypergraph removal lemma, which was one of the main reasons for developing
the hypergraph regularity lemma? Our final result (see Proposition \ref{claim:example} for the formal statement) 
answers both questions negatively. 
This of course makes our lower bound even stronger as it already applies to a very weak notion of regularity.

\begin{prop}{\bf[Informal statement]}\label{prop:counter}
$\langle \d \rangle$-regularity is not strong enough even for proving the graph triangle removal lemma.
\end{prop}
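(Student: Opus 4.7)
The plan is to exhibit a tripartite graph $G$ on parts $V_1,V_2,V_3$ with $|V_i|=n$ such that each bipartite subgraph $G_{ij}$ is $\langle\d\rangle$-regular with density bounded away from $0$, yet $G$ contains only $o(n^3)$ triangles. Any deduction of the triangle removal lemma from a regularity lemma must pass through a counting/embedding step asserting that $\langle\d\rangle$-regular triples of positive density span $\Omega(n^3)$ triangles; our $G$ falsifies such a statement, and a standard blow-up produces from it a graph with $o(n^3)$ triangles that is $\Omega(n^2)$-far from triangle-free, blocking any ``regularity-based'' proof.

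The natural candidate is algebraic: take $V_1=V_2=V_3=\mathbb{F}_p$ for a prime $p\approx n$, and let each $G_{ij}$ be a Cayley-type graph $\{(x,y):y-x\in S_{ij}\}$ for carefully chosen $S_{ij}\subseteq\mathbb{F}_p$. The additive identity $(y-x)+(z-y)=(z-x)$ forced by a triangle lets us arrange, for example by taking $S_{12}$ and $S_{23}$ to be short intervals and $S_{13}$ to be a union of disjoint shifts of $S_{12}+S_{23}$ with only small intersection with the natural ``triangle locus'', that all triangles of $G$ lie on a low-dimensional variety and hence number at most $O(n^2)$, while each density $|S_{ij}|/p$ stays bounded away from $0$.

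The crux of the argument is verifying $\langle\d\rangle$-regularity of each $G_{ij}$. Here I would exploit the feature highlighted in item~$(iii)$ of the introduction: $\langle\d\rangle$-regularity imposes density control only over a restricted family of test subsets---those arising from the lower-arity partition hierarchy---rather than over arbitrary subsets of size $\geq \d n$ as in Szemer\'edi regularity. A Fourier/Plancherel estimate on $\mathbb{F}_p$ then shows that the Cayley graph of $S_{ij}$ fools every admissible test pair provided the relevant Fourier coefficients of $\mathbf{1}_{S_{ij}}$ are small; since we have considerable freedom in the shape of each $S_{ij}$, this can be engineered simultaneously with the triangle-suppression property, for instance by Bohr-set-type choices.

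The main obstacle is matching the algebraic construction to the precise formal definition of $\langle\d\rangle$-regularity from \cite{MS3}: the admissible test family dictates which Fourier modes of $S_{ij}$ must remain small to preserve regularity and which may be exploited to concentrate triangles. If a clean algebraic example proves elusive, a fallback is a hybrid construction---start with a random bipartite graph of density $d$, which fools all bounded tests with high probability, and then delete edges along a sparse pseudorandom additive pattern to destroy most triangles, with regularity maintained via a standard second-moment/concentration argument.
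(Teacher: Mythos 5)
There is a genuine gap, and it begins with a misreading of Definition~\ref{def:star-regular}. You assert that $\langle\d\rangle$-regularity ``imposes density control only over a restricted family of test subsets,'' but for graphs the definition quantifies over \emph{all} pairs $A'\sub A$, $B'\sub B$ with $|A'|\ge\d|A|$, $|B'|\ge\d|B|$, exactly as in Szemer\'edi regularity; the only weakenings are that the conclusion is a one-sided lower bound ($d(A',B')\ge\frac12 d(A,B)$) and that $\d$ is decoupled from the density. This kills your dense algebraic plan outright: if all three bipartite graphs are $\langle\d\rangle$-regular with densities $d_{ij}\ge 2\d$, then all but $2\d n$ vertices $v\in V_1$ satisfy $|N(v)\cap V_2|,|N(v)\cap V_3|\ge\frac12 d\,n\ge\d n$ (otherwise the set of bad vertices, tested against all of $V_2$ or $V_3$, violates regularity), and applying $\langle\d\rangle$-regularity of the $(V_2,V_3)$ pair to these two neighborhoods yields at least $\frac12 d_{23}\cdot\frac14 d_{12}d_{13}n^2$ triangles through each such $v$, hence $\Omega(n^3)$ triangles in total. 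So in the regime ``density bounded away from $0$'' (relative to $\d$) the counting lemma \emph{does} hold for $\langle\d\rangle$-regularity, and the example you seek cannot exist. The Fourier heuristic points the same way: if all nonzero Fourier coefficients of the $S_{ij}$ are small, the triangle count is forced to be $\approx d_{12}d_{23}d_{13}n^3$, whereas concentrating $S_{12}+S_{23}$ to suppress triangles requires large Fourier coefficients, which produce $\d$-large test sets of density below $\frac12 d$.

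The paper's construction lives in the opposite, sparse regime, which is the whole point of the proposition (and of Theorem~\ref{theo:main}): $\d$ is fixed while the density $p\le 10^{-3}\d^5$ is tiny. One takes a random tripartite graph with edge probability $q=3p$ on classes of size $k$ with $k\le\frac14\d^3q^{-2}$, so that the triangle count $\approx k^3q^3=(k^2q)(kq^2)$ is at most a $\d^3$-fraction of the number of edges; Chernoff gives $d(S,T)=(1\pm\frac13\d)q$ for all $\d$-large $S,T$; deleting one edge per triangle then costs so few edges that every such pair retains density $\ge(1-\d)q$, yielding a \emph{triangle-free} $\langle\d\rangle$-regular triple, which is finally blown up (regularity of blow-ups is verified via a convex-combination lemma). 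Your fallback idea (random graph plus deletion of a sparse pattern) is closer in spirit, but as stated in the dense regime it collides with the same embedding obstruction; it only becomes viable once you let the density drop well below $\d$, at which point you have essentially rediscovered the paper's argument.
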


\subsection{Paper overview}

Broadly speaking, Section \ref{sec:define} serves as the technical introduction to this paper, while
Section \ref{sec:LB} contains the main technical proofs.
More concretely, in Section \ref{sec:define} we will first define the new notion of $k$-graph regularity, which we term $\langle \d \rangle$-regularity,
for which we will prove our main lower bound. We will then give the formal version of Theorem~\ref{thm:main-informal} (see Theorem~\ref{theo:main}).
This will be followed by the statement of the main technical result we will use in this paper, Theorem~\ref{theo:core},
and an overview of how this technical result is used in the proof of Theorem~\ref{theo:main}.
The proof of Theorem~\ref{theo:main} appears in Section~\ref{sec:LB}. 
In Section \ref{sec:coro} we describe how Theorem~\ref{theo:main} can be used in order to prove Corollary~\ref{coro:RS-LB}, thus establishing tight
$\Ack_k$-type lower bounds for a concrete version of the hypergraph regularity lemma.
Since at its core, the proof of Corollary~\ref{coro:RS-LB} is very similar to the
way we derived lower bounds for concrete regularity lemmas for $3$-uniform hypergraphs in \cite{MS3},
just with a much more elaborate set of notations (due to having to deal with arbitrary $k$),
we decided to put the proof of some technical claims only in the appendix of the Arxiv version of this paper.
Finally, in Section~\ref{sec:example} we prove Proposition \ref{prop:counter} by describing an example showing that even in the setting of graphs,
$\langle \d \rangle$-regularity is strictly weaker than the usual notion of graph regularity, as it does not allow one even to count triangles.

\paragraph{How is this paper related to~\cite{MS3}:}
For the reader's convenience we explain how this paper differs from~\cite{MS3}, in which we prove Theorem~\ref{theo:main} for $k=3$.
First, the definitions given in Section~\ref{sec:define} when specialized to $k=3$ are the same notions used in (Section~2 of)~\cite{MS3}.
The heart of the proof of Theorem~\ref{theo:main} is given by Lemma~\ref{lemma:ind-k} which is proved by induction on $k$.
The (base) case $k=2$ follows easily from Lemma~\ref{theo:core} which was proved in~\cite{MS3}.
Hence, the heart of the matter is the proof of Lemma~\ref{lemma:ind-k} by induction on $k$. Within this framework,
the argument given in~\cite{MS3} 
is precisely the deduction of Lemma~\ref{lemma:ind-k} for $k=3$ from the case $k=2$.
Hence, the reader interested in seeing the inductive proof of Lemma~\ref{lemma:ind-k} ``in action''---without the clutter
caused by the complicated definitions related to $k$-graphs---is advised to check~\cite{MS3}.

\section{$\langle \d \rangle$-regularity and Proof Overview}\label{sec:define}

\subsection{Preliminary definitions}\label{subsec:preliminaries}


%

Before giving the definition of $\langle \d \rangle$-regularity, let us start with some standard definitions regarding partitions of hypergraphs.
Formally, a \emph{$k$-graph} is a pair $H=(V,E)$, where $V=V(H)$ is the vertex set and $E=E(H) \sub \binom{V}{k}$ is the edge set of $H$.
The number of edges of $H$ is denoted $e(H)$ (i.e., $e(H)=|E|$).
We denote by $K^k_\ell$ the complete $\ell$-vertex $k$-graph (i.e., containing  all possible $\binom{\ell}{k}$ edges).
The $k$-graph $H$ is \emph{$\ell$-partite} $(\ell \ge k)$ on (disjoint) vertex classes $(V_1,\ldots,V_\ell)$ if every edge of $H$ has at most one vertex from each $V_i$.
We denote by $H[V'_1,\ldots,V'_\ell]$ the $\ell$-partite $k$-graph induced on vertex subsets $V_1' \sub V_1,\ldots,V_\ell'\sub V_\ell$; that is, $H[V'_1,\ldots,V'_\ell]=((V_1',\ldots,V_\ell'),\, \{e \in E(H) \,\vert\, \forall i \colon e \cap V_i \in V_i'\})$.
The \emph{density} of a $k$-partite $k$-graph $H$ is $e(H)/\prod_{i=1}^k |V_i|$.
The set of edges of $G$ between disjoint vertex subsets $A$ and $B$ is denoted by $E_G(A,B)$; the density of $G$ between $A$ and $B$ is denoted by $d_G(A,B)=e_G(A,B)/|A||B|$, where $e_G(A,B)=|E_G(A,B)|$. We use $d(A,B)$ if $G$ is clear from context.
When it is clear from context, we sometimes identify a hypergraph with its edge set. In particular, we will use $V_1 \times V_2$ for the complete bipartite graph on vertex classes $(V_1,V_2)$, and more generally, $V_1 \times\cdots\times V_k$ for the complete $k$-partite $k$-graph on vertex classes $(V_1,\ldots,V_k)$.




For a partition $\Z$ of a vertex set $V$,
the complete multipartite $k$-graph on $\Z$ is denoted by
$\Cross_k(\Z)=
\big\{ e \sub V \,\big\vert\, \forall\, V \in \Z \colon |e \cap V| \le 1 \text{ and } |e|=k\big\}$.
For partitions $\P,\Q$ of the same underlying set, we say that $\Q$ \emph{refines} $\P$, denoted $\Q \prec \P$, if every member of $\Q$ is contained in a member of $\P$.
We use the notation $x \pm \e$ for a number lying in the interval $[x-\e,\,x+\e]$.

We now define a \emph{$k$-partition},
which is a notion of a hypergraph partition\footnote{This is the standard notion, identical to the one used by R\"odl-Schacht (\cite{RodlSc07}, Definition10).}.
%
%
A $k$-partition $\P$ is of the form $\P=\P^{(1)} \cup\cdots\cup \P^{(k)}$ where $\P^{(1)}$ is a vertex partition, and for each $2 \le r \le k$, $\P^{(r)}$ is a partition of $\Cross_r(\P^{(1)})$ satisfying a condition we will state below.
First, to ease the reader in, let us describe here what a $k$-partition is for $1 \le k \le 3$.
A $1$-partition is simply a vertex partition.
A $2$-partition $\P=\P^{(1)} \cup \P^{(2)}$ consists of a vertex partition $\P^{(1)}$ and a partition $\P^{(2)}$ of $\Cross_2(\P^{(1)})$
such that the complete bipartite graph between any two distinct clusters of $\P^{(1)}$ is a union of parts of $\P^{(2)}$.
A $3$-partition $\P=\P^{(1)} \cup \P^{(2)} \cup \P^{(3)}$ consists of a $2$-partition $\P^{(1)} \cup \P^{(2)}$ and a partition $\P^{(3)}$ of $\Cross_3(\P^{(1)})$
such that for every tripartite graph $G$ whose three vertex clusters lie in $\P^{(1)}$ and three bipartite graphs lie in $\P^{(2)}$, the $3$-partite $3$-graph consisting of all triangles in $G$ is a union of parts of $\P^{(3)}$.


Before defining a $k$-partition in general, we need some terminology.
A \emph{$k$-polyad} is simply a $k$-partite $(k-1)$-graph.
Thus, a $2$-polyad is just a pair of disjoint vertex sets, and a $3$-polyad is a tripartite graph.
In the rest of this paragraph let $P$ be a $k$-polyad on vertex classes $(V_1,\ldots,V_k)$.
We
often identify $P$
with the $k$-tuple $(F_1,\ldots,F_k)$ where each $F_i$ is the induced $(k-1)$-partite $(k-1)$-graph $F_i=P[\bigcup_{j \neq i} V_j]$.
%
We denote by $\K(P)$ the set of $k$-element subsets of $V(P)$ that span a clique (i.e., a $K^{k-1}_{k}$) in $P$; we view $\K(P)$ as a $k$-graph on $V(P)$.
Note that $\K(P)$ is a $k$-partite $k$-graph.
For example, if $P$ is a $2$-polyad then $\K(P)$ is a complete bipartite graph (since $K^1_2$ is just a pair of vertices), and if $P$ is a $3$-polyad then $\K(P)$ is the $3$-partite $3$-graph whose edges correspond to the triangles in $P$.
For a family of hypergraphs $\P$, we say that the $k$-polyad $P=(F_1,\ldots,F_k)$ is a $k$-polyad \emph{of} $\P$ if $F_i \in \P$ for every $1 \le i \le k$.



%


%
We are now ready to define a $k$-partition for arbitrary $k$.

\begin{definition}[$k$-partition]\label{def:r-partition}
	$\P$ is a \emph{$k$-partition} $(k \ge 1)$ on $V$ if $\P=\P^{(1)} \cup\cdots\cup \P^{(k)}$ with $\P^{(1)}$ a partition of $V$,
	and for every $2 \le r \le k$, $\P^{(r)}$ is a partition of $\Cross_r(\P^{(1)})$ into $r$-partite $r$-graphs with $\P^{(r)} \prec \K_r(\P):=\{\K(P) \,\vert\, P \text{ is an $r$-polyad of } \P\}$.
\end{definition}
Note that, by Definition~\ref{def:r-partition}, each $r$-partite $r$-graph $F \in \P^{(r)}$ satisfies $F \sub \K(P)$ for a unique $r$-polyad $P$ of $\P$.
In this context, let $\under$ be the function mapping $F$ to $P$.
So for example, if $\P=\P^{(1)}\cup\P^{(2)}\cup\P^{(3)}$ is a $3$-partition, for every bipartite graph $F \in \P^{(2)}$ we have that $\under(F)$ is the pair of vertex classes of $F$;
similarly, for every $3$-partite $3$-graph $F \in \P^{(3)}$ we have that $\under(F)$ is the unique $3$-partite graph of $\P^{(2)}$ whose set of triangles contains the edges of $F$.



We encourage the reader to verify that Definition~\ref{def:r-partition} is indeed compatible with the explicit description of a $1$-, $2$- and $3$-partition given above.

%
%
%
%

\subsection{$\langle \d \rangle$-regularity of graphs and hypergraphs}

In this subsection we define our new notion of $\langle\d\rangle$-regularity, first for graphs and then for $k$-graphs for any $k \ge 2$ in Definition~\ref{def:k-reg} below. 
\begin{definition}[graph $\langle\d\rangle$-regularity]\label{def:star-regular}
	A bipartite graph $G$ on $(A,B)$ is \emph{$\langle \d \rangle$-regular} if for all subsets $A' \sub A$, $B' \sub B$ with $|A'| \ge \d|A|$, $|B'|\ge\d|B|$ we have $d_G(A',B') \ge \frac12 d_G(A,B)$.\\
	A vertex partition $\P$ of a graph $G$ is \emph{$\langle \d \rangle$-regular}
	if one can add/remove at most $\d \cdot e(G)$ edges so that the bipartite graph induced on each $(A,B)$ with $A \neq B \in \P$ is $\langle \d \rangle$-regular.
	%
\end{definition}

For the reader worried that in Definition~\ref{def:star-regular} we merely replaced the $\e$
from the definition of $\e$-regularity with $\d$, we refer to the discussion following Theorem~\ref{theo:main} below.


%
%

The definition of $\langle\d\rangle$-regularity for hypergraphs involves the $\langle\d\rangle$-regularity notion for graphs, applied to certain auxiliary graphs which are defined as follows.
Henceforth, if $P$ is a $(k-1)$-graph and $H$ is $k$-graph then we say that $H$  is \emph{underlied} by $P$ if $H \sub \K(P)$.



\begin{definition}[The auxiliary graph $G_{H}^i$]\label{def:aux}
	For a $k$-partite $k$-graph $H$ on vertex classes $(V_1,\ldots,V_k)$,
	we define a bipartite graph $G_{H}^1$ on the vertex classes $(V_2 \times\cdots\times V_k,\,V_1)$ by
	$$E(G_{H}^1) = \big\{ ((v_2,\ldots,v_k),v_1) \,\big\vert\, (v_1,\ldots,v_k) \in E(H) \big\} \;.$$
	The graphs $G_{H}^i$ for $2 \le i \le k$ are defined in an analogous manner.
%
%
More generally, if $H$ is underlied by the $k$-polyad $P=(F_1,\ldots,F_k)$ then we define $G_{H,P}^i$ as the induced subgraph $G_{H,P}^i=G_{H}^i[F_i,V_i]$.
\end{definition}



As a trivial example, if $H$ is a bipartite graph then $G_H^1$ and $G_H^2$ are both isomorphic to $H$.

%

Importantly, for a $k$-partition (as defined in Definition~\ref{def:r-partition}) to be $\langle\d\rangle$-regular it must first satisfy a requirement on the regularity of its parts.


\begin{definition}[$\langle\d\rangle$-good partition]\label{def:k-good}
A $k$-partition $\P$ on $V$ is \emph{$\langle\d\rangle$-good} if for every $2 \le r \le k$ and every $F \in \P^{(r)}$ the following holds;
letting $P=\under(F)$ be the $r$-polyad of $\P$ underlying $F$, for every $1 \le i \le r$ the bipartite graph $G_{F,P}^i$ is $\langle \d \rangle$-regular.
\end{definition}
Note that a $1$-partition is trivially $\langle \d \rangle$-good for any $\d$.
Moreover, a $2$-partition $\P$ is $\langle \d \rangle$-good if and only if every bipartite graph in $\P^{(2)}$ (between any two distinct vertex clusters of $\P^{(1)}$) is $\langle \d \rangle$-regular (recall the remark after Definition \ref{def:aux}).




For a $(k-1)$-partition $\P$ with $\P^{(1)} \prec \{V_1,\ldots,V_k\}$ we henceforth denote, for every $1 \le i \le k$,
\begin{equation}\label{eq:partition-notation}
V_i(\P) = \Big\{Z \in \P^{(1)} \,\vert\, Z \sub V_i\Big\} \quad\text{ and }\quad E_i(\P) = \Big\{E \in \P^{(k-1)} \,\vert\, E \sub \prod_{j \neq i} V_j\Big\}.\footnote{$\prod_{j \neq i} V_j = V_1\times\cdots\times V_{i-1}\times V_{i+1}\times\cdots\times V_k$.} 
\end{equation} 

\begin{definition}[$\langle\d\rangle$-regular partition]\label{def:k-reg}
	Let $H$ be a $k$-partite $k$-graph on vertex classes $(V_1,\ldots,V_k)$
	and $\P$ be a $\langle \d \rangle$-good $(k-1)$-partition with $\P^{(1)} \prec \{V_1,\ldots,V_k\}$.
	We say that $\P$ is a \emph{$\langle \d \rangle$}-regular partition of $H$ if
	for every $1 \le i \le k$,
	$E_i(\P) \cup V_i(\P)$ is a $\langle \d \rangle$-regular partition of $G_H^i$.
	%
%
%
	%
	%
\end{definition}
Note that for $k=2$, Definition~\ref{def:k-reg} reduces to Definition~\ref{def:star-regular}.
For $k=3$, a $\langle \d \rangle$-regular partition of a $3$-partite $3$-graph $H$ on vertex classes $(V_1,V_2,V_3)$ is a $2$-partition $\P=\P^{(1)} \cup \P^{(2)}$ satisfying that: $(i)$ $\P$ is $\langle \d \rangle$-good per Definition~\ref{def:k-good};
$(ii)$ from the auxiliary graph
$G_H^1$, on $(V_2 \times V_3,\,V_1)$,
one can add/remove at most $\d$-fraction of the edges such that for every graph $F \in E_1(\P)$ (so $F \sub V_2 \times V_3$) 
and every vertex cluster $V \in V_1(\P)$ (so $V \sub V_1$), 
the induced bipartite graph $G_H^1[F,V]$ is $\langle \d \rangle$-regular; and~$(iii)$ the analogues of~$(ii)$ in $G_H^2$ and $G_H^3$ hold as well.

\subsection{Formal statement of the main result}

We are now ready to formally state our Ackermann-type lower bound for $k$-graph $\langle \d \rangle$-regularity (the formal version of
Theorem~\ref{thm:main-informal} above). Recall that we set $\Ack_{1}(x)=2^x$ and then define for every $k \geq 1$ the $(k+1)^{th}$
Ackermann function $\Ack_{k+1}(n)$ to be the $n$-times composition of $\Ack_{k}(n)$, that is,
$\Ack_{k+1}(n)= \Ack_{k}(\Ack_{k}(\cdots(\Ack_{k}(1))\cdots))$.

\begin{theo}[Main result]\label{theo:main}
	The following holds for every $k \ge 2$, $s \in \N$.
	There exists a $k$-partite $k$-graph $H$ of density $2^{-s-k}$,
	and a partition $\V_0$ of $V(H)$ with $|\V_0| \le 2^{200}k$, such that for every $\langle 2^{-16^k} \rangle$-regular partition $\P$ of $H$, if $\P^{(1)} \prec \V_0$ then $|\P^{(1)}| \ge \Ack_k(s)$.
%
\end{theo}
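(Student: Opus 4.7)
My plan is to prove Theorem~\ref{theo:main} by establishing a stronger inductive statement, Lemma~\ref{lemma:ind-k}, and then inducting on $k$. This lemma would provide, for every $k\ge 2$ and every $s\in\N$, a $k$-partite $k$-graph $H_k$ of density roughly $2^{-s-k}$, together with a coarse vertex partition $\V_0$ of size $O(k)$, such that any $\langle\d\rangle$-regular $(k-1)$-partition $\P$ of $H_k$ with $\P^{(1)}\prec\V_0$ must satisfy $|\P^{(1)}|\ge\Ack_k(s)$. Beyond just this cardinality bound, the inductive statement should record enough structural information about the auxiliary bipartite graphs $G_{H_k}^i$ from Definition~\ref{def:aux} to serve as the input to the next level of the construction. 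Theorem~\ref{theo:main} is then essentially a rebranding of Lemma~\ref{lemma:ind-k}, with the only work being to check the density exponent $2^{-s-k}$, the $2^{200}k$ bound on $|\V_0|$, and the calibrated regularity parameter $2^{-16^k}$.

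The base case $k=2$ is a Gowers-type statement for graph $\langle\d\rangle$-regularity, and my plan is to extract it directly from Lemma~\ref{theo:core}, which already packages the combinatorial heart of the Gowers lower bound in a form tailored to produce $\langle\d\rangle$-regular partitions of size $\Ack_2(s)=\twr(s)$. The main content of the paper is thus the inductive step $k\to k+1$, which I would carry out in direct analogy with the $k=2\to k=3$ argument of~\cite{MS3}, but with all the extra notational apparatus needed to handle $k$-polyads and $k$-partitions of arbitrary uniformity.

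The idea of the inductive step is to lift a carefully orchestrated sequence of level-$k$ lower-bound constructions, at geometrically decreasing density scales, into a single $(k+1)$-partite $(k+1)$-graph $H_{k+1}$ through a controlled choice of the auxiliary graphs $G_{H_{k+1}}^i$. Given any $\langle\d\rangle$-regular $k$-partition $\P$ of $H_{k+1}$, Definition~\ref{def:k-reg} forces each $G_{H_{k+1}}^i$ to be $\langle\d\rangle$-regularly partitioned by $E_i(\P)\cup V_i(\P)$; restricting this data to a typical $(k+1)$-polyad of $\P$ should then recover a $\langle\d'\rangle$-regular $(k-1)$-partition of an appropriate $k$-graph $H_k$ produced by the inductive hypothesis. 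Applying the hypothesis forces $\Ack_k(\,\cdot\,)$ many parts at each of the $s$ density scales, and telescoping these $s$ applications of $\Ack_k$ yields exactly $\Ack_{k+1}(s)$.

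I expect the main obstacle to be the bookkeeping for the regularity parameter across one level of the hierarchy. The value $2^{-16^k}$ is calibrated precisely so that $\langle\d\rangle$-regularity at level $k+1$ survives restriction to a typical $(k+1)$-polyad and yields $\langle\d'\rangle$-regularity at level $k$ with $\d'$ still meeting the inductive hypothesis; showing this requires combining $\langle\d\rangle$-goodness (Definition~\ref{def:k-good}) of the intermediate partition with a pigeonhole-style averaging over polyads, and it is here that the doubly-exponential slack between $2^{-16^{k+1}}$ and $2^{-16^k}$ is consumed. Additional care will be needed to track the density exponent so as to land at $2^{-s-k}$, and to thread the coarse partition $\V_0$ through the construction so that the essential structural skeleton of $H_{k+1}$ cannot be destroyed by the choice of $\P^{(1)}$.
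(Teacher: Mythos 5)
There is a genuine gap in your plan, and it concerns the \emph{shape} of the inductive statement. You propose that Lemma~\ref{lemma:ind-k} should directly assert that every $\langle\d\rangle$-regular partition of $H_k$ has $|\P^{(1)}|\ge\Ack_k(s)$ (plus ``enough structural information'' about the $G_{H_k}^i$), and that Theorem~\ref{theo:main} is then ``essentially a rebranding'' of it. The inductive statement cannot take that form. What the induction actually needs to chain across density scales is a \emph{refinement implication}: ``if $V_h(\P)$ approximately refines the canonical partition $V_h(\V_i)$ for all $h\ge 2$, then $V_1(\P)$ approximately refines $V_1(\V_{i+1})$.'' This implication is unavoidably \emph{one-sided}: an unconditional version (``every regular partition refines the canonical partitions'') would, already at the graph level, prove an Ackermann-type lower bound for sequences of partitions growing at Ackermann rate, and hence for Szemer\'edi's regularity lemma itself --- which is false. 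Conversely, a bare cardinality bound at level $k$ is too weak to feed into level $k+1$, because the inductive step needs to know that the restriction of $\P$ to a polyad refines \emph{specific} canonical partitions $\F_{\A_k^*(j')}$ in order to invoke the level-$k$ hypothesis at the right density scale. So the ``structural information'' you gesture at is not a supplement to the cardinality bound; it \emph{replaces} it, and the cardinality bound disappears from the induction entirely.

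This creates a second missing ingredient: once the induction only yields a conditional, one-sided implication for a single ordered tuple of vertex classes, some additional device is needed to extract an unconditional size lower bound at the end. The paper does this by placing $2k$ vertex classes on a tight $2k$-cycle and superimposing $2k$ rotated copies of the one-sided construction, one per edge of the cycle; looking at the class whose partition first fails to refine the canonical hierarchy, the copy of the construction whose ``output'' class is that one has all its ``input'' classes still refining, and the one-sided implication then forces the output class to refine as well --- a contradiction. Without this pasting step (or an equivalent symmetrization), your outline does not produce Theorem~\ref{theo:main} from the inductive lemma. The rest of your sketch --- base case from Lemma~\ref{theo:core}, lifting level-$k$ partitions into a $(k+1)$-graph by applying the core graph construction with $\Lside=\Vside^1\times\cdots\times\Vside^k$, telescoping $s$ applications of $\Ack_k$ into $\Ack_{k+1}(s)$, and the parameter bookkeeping for $2^{-16^k}$ --- is consistent with the paper's argument, but the two points above are where the actual work lies.
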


Let us draw the reader's attention to an important and perhaps surprising aspect of Theorem~\ref{theo:main}.
All the known tower-type lower bounds for graph regularity depend on the error parameter $\epsilon$,
that is, they show the existence of graphs $G$ with the property that every $\epsilon$-regular partition of $G$ is of order at least $\Ack_2(\poly(1/\e))$.
This should be contrasted with the fact that our lower bounds for $\langle \d \rangle$-regularity holds for a {\em fixed} error parameter $\delta$.
Indeed, instead of the dependence on the error parameter, our lower bound depends on the {\em density} of the graph.
This delicate difference makes it possible for us to execute the inductive part of the proof of Theorem~\ref{theo:main}.

In \cite{MoshkovitzSh18} we gave a $\Ack_2(\log 1/p)$ upper bound for a notion of regularity that is 
slightly stronger than $\langle \d \rangle$-regularity, where we use $p$ to denote the edge density of a graph/$k$-graph. 
This allowed us to devise a new proof of Fox's upper bound
for the graph removal lemma. We believe that it should
be possible to match our lower bound stated in Theorem \ref{theo:main} with an $\Ack_k(\log 1/p)$ upper bound that applies even 
for a slightly stronger notion of regularity analogous to the one used in \cite{MoshkovitzSh16}. 
This should allow one to deduce an $\Ack_k(\log 1/\epsilon)$ upper bound
for the $k$-graph removal lemma. The best known bounds for this problem are (at least) $\Ack_k(\poly(1/\epsilon))$.

\subsection{The core construction and proof overview}

The graph construction in Theorem~\ref{theo:core} below is the main technical result we will need in order to prove Theorem~\ref{theo:main}.
This lemma was proved in \cite{MS3}.
We will first need to define ``approximate'' refinement (a notion that goes back to Gowers~\cite{Gowers97}).
\begin{definition}[Approximate refinements]
For sets $S,T$ we write $S \sub_\b T$ if $|S \sm T| \le \b|S|$.
For a partition $\P$ we write $S \in_\b \P$ if $S \sub_\b P$ for some $P \in \P$.
For partitions $\P,\Q$ of the same set of size $n$ we write $\Q \prec_\b \P$ if
$$\sum_{\substack{Q \in \Q\colon\\Q \notin_\b \P}} |Q| \le \b n \;.$$
\end{definition}
Note that for $\Q$ equitable, $\Q \prec_\b \P$ if and only if
all but at most $\b|\Q|$ parts $Q \in \Q$ satisfy $Q \in_\b \P$.
We note that throughout the paper we will only use approximate refinements with $\b < 1/2$, and so if $S \in_\b \P$ then $S \sub_\b P$ for a unique $P \in \P$.
We will later need the following claim. 
\begin{claim}\label{claim:refinement-size}
	If $\Q \prec_{1/2} \P$ and $\P$ is equitable then $|\Q| \ge \frac12|\P|$.
\end{claim}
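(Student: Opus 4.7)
The plan is a direct two-step estimate using only the equitability of $\P$ and the definition of $\prec_{1/2}$. Set $n=|V|$ and $p=|\P|$, so every $P\in\P$ has size $n/p$.

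First, I would establish a pointwise upper bound on the size of each ``good'' part of $\Q$. Concretely, if $Q\in\Q$ satisfies $Q\sub_{1/2} P$ for some $P\in\P$, then by definition $|Q\cap P|\ge |Q|/2$, and combining this with the trivial $|Q\cap P|\le |P|=n/p$ gives $|Q|\le 2n/p$. Second, I would lower-bound the total mass carried by good parts. Writing $\Q' := \{Q\in\Q : Q\in_{1/2}\P\}$, the definition of $\Q\prec_{1/2}\P$ says $\sum_{Q\notin\Q'}|Q|\le n/2$, so $\sum_{Q\in\Q'}|Q|\ge n/2$.

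Combining these two bounds immediately yields $|\Q|\ge |\Q'| \ge (n/2)/(2n/p) = p/4$, i.e.\ the right order of magnitude. To upgrade this to the claimed $p/2$ one exploits a bit more of the partition structure: for each $P\in\P$, disjointness of the parts of $\Q$ lying mostly inside $P$ (call this set $\Q'_P$) together with $|Q|\le 2|Q\cap P|$ gives $\sum_{Q\in\Q'_P}|Q|\le 2|P|$, and an averaging argument over $P\in\P$ tightens the constant. In the simplest case where $\Q$ is itself equitable---which is typically how the claim is applied---the sharpening is immediate: $|Q|=n/|\Q|$ for all $Q\in\Q$ turns the pointwise bound of Step~1 into $n/|\Q|\le 2n/p$, i.e.\ $|\Q|\ge p/2$.

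The main obstacle I anticipate is precisely this last factor-of-two sharpening. The pointwise estimate in Step~1 is itself tight (achieved when $|Q|=2|P|$ and $Q$ straddles two parts of $\P$ equally), so a naive combination with Step~2 will always leave a factor of $2$ on the table. Closing that gap requires either the extra equitability hypothesis above or a genuinely global double-counting that uses both the disjointness of parts of $\Q$ and the fact that $\P$ tiles $V$ with all parts of the same size.
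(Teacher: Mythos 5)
Your Steps 1 and 2 are correct and give $|\Q|\ge\frac14|\P|$ by a clean global double counting; this is a genuinely different route from the paper's, which instead passes to a subset $U^*$ of the ground set on which the approximate refinement becomes an exact refinement and then counts the nonempty parts of $\P$ restricted to $U^*$. Your argument has the merit of being completely explicit, whereas the paper's hides the same constant-factor bookkeeping in the assertion $|U^*|\ge\frac12|U|$ (the natural choice $U^*=\bigcup_{Q\in_{1/2}\P}(Q\cap P_Q)$ only guarantees $|U^*|\ge\frac14|U|$).

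The gap is exactly where you located it: the upgrade from $\frac14$ to $\frac12$. The averaging you sketch does not close it --- summing $\sum_{Q\in\Q'_P}|Q|\le 2|P|$ over $P\in\P$ merely reproduces $\sum_{Q\in\Q'}|Q|\le 2n$, which points the wrong way --- and in fact no argument can close it without an extra hypothesis. Take $\P=\{P_1,\ldots,P_8\}$ equitable on $n$ points and $\Q=\{P_1\cup P_2,\;P_3\cup P_4,\;P_5\cup P_6\cup P_7\cup P_8\}$. Then $P_1\cup P_2\sub_{1/2}P_1$ and $P_3\cup P_4\sub_{1/2}P_3$ (each loses exactly half of itself), the third part is the only one with $Q\notin_{1/2}\P$ and has size exactly $\frac12 n$, so $\Q\prec_{1/2}\P$, yet $|\Q|=3<4=\frac12|\P|$. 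So with $\b=\frac12$ the statement is only true with $\frac14$ in place of $\frac12$, unless one adds an assumption such as equitability of $\Q$ (under which your pointwise bound does give $\frac12$, as you note) or a smaller $\b$. None of this matters downstream --- the claim is invoked once, in the proof of Theorem~\ref{theo:main}, where any absolute constant suffices --- but you should state and prove the $\frac14$ version rather than promise a sharpening that is not available in general.
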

\begin{proof}
	Since $\Q \prec_{1/2} \P$, the underlying set $U$ has a subset $U^*$ of size $|U^*|\ge \frac12|U|$ such that the partitions $\Q^*=\{Q \cap U^* \colon Q \in \Q\}$ and $\P^*=\{P \cap U^* \colon P \in \P\}$ of $U^*$ satisfy $\Q^* \prec \P^*$.
	Since $\P$ is equitable, $|\P^*| \ge \frac12|\P|$.
	Therefore, $|\Q| \ge |\Q^*| \ge |\P^*| \ge \frac12|\P|$, as desired.
\end{proof}

We stress that in Theorem~\ref{theo:core} below we only use notions related to graphs. In particular, $\langle \d \rangle$-regularity refers to Definition~\ref{def:star-regular}.

\begin{lemma}[\cite{MS3}]\label{theo:core}
	Let $\Lside,\Rside$ be disjoint sets. Let
	$\L_1 \succ \cdots \succ \L_s$ and $\R_1 \succ \cdots \succ \R_s$ be two sequences of $s$ successively refined equipartitions of $\Lside$ and $\Rside$, respectively,
	that satisfy for  every $i \ge 1$ that:
	\begin{enumerate}
		\item\label{item:core-minR}
		$|\R_i|$ is a power of $2$ and $|\R_1| \ge 2^{200}$,
		\item\label{item:core-expR} $|\R_{i+1}| \ge 4|\R_i|$,
		\item\label{item:core-expL} $|\L_i| = 2^{|\R_i|/2^{i+10}}$.
	\end{enumerate}
	%
	%
	Then there exists a sequence of $s$ successively refined edge equipartitions $\G_1 \succ \cdots \succ \G_s$ of $\Lside \times \Rside$ such that for every $1 \le j \le s$, $|\G_j|=2^j$, and
	the following holds for every $G \in \G_j$ and $\d \le 2^{-20}$.
	For every $\langle \d \rangle$-regular partition $\P \cup \Q$ of $G$, where $\P$, $\Q$ are partitions of $\Lside$, $\Rside$, respectively, and every $1 \le i \le j$, 	
	if $\Q \prec_{2^{-9}} \R_{i}$ then $\P \prec_{\g} \L_{i}$ with
	$\g = \max\{2^{8}\sqrt{\d},\, 32/\sqrt[6]{|\R_1|} \}$.
\end{lemma}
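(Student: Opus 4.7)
The plan is to build the sequence $\G_1,\ldots,\G_s$ by a recursive level-by-level construction that encodes the refinement structure $(\L_i,\R_i)$ into pseudorandom bipartite subgraphs of $\Lside \times \Rside$, and to verify the conclusion by a contradiction argument: any $\langle \d\rangle$-regular $\P \cup \Q$ of $G \in \G_j$ whose $\Q$-side approximately refines $\R_i$ must force $\P$ to approximately refine $\L_i$, on pain of violating a density consistency dictated by the construction.

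For the construction, I proceed level by level on $i$. The size relation $|\L_i|=2^{|\R_i|/2^{i+10}}$ says that each $L \in \L_i$ can be labeled by a bit-string of length $|\R_i|/2^{i+10}$. I lift this via an error-correcting code (Gilbert--Varshamov, or a random code analyzed by a union bound) to a codeword $\chi_L \in \{0,1\}^{|\R_i|}$ indexed by $R \in \R_i$, so that distinct $L,L' \in \L_i$ disagree on a constant fraction of coordinates. For each pair $(L,R)$ with $L \in \L_i$, $R \in \R_i$, pick a uniformly random balanced partition of $L \times R$ into two halves $H^{L,R}_0$, $H^{L,R}_1$, and set
$$H_i^b \;=\; \bigcup_{L \in \L_i,\, R \in \R_i} H^{L,R}_{\chi_L(R) \oplus b} \qquad (b \in \{0,1\}).$$
Finally set $\G_j=\{H_1^{b_1} \cap \cdots \cap H_j^{b_j} \,:\, (b_1,\ldots,b_j) \in \{0,1\}^j\}$, which is an edge equipartition of $\Lside \times \Rside$ into $2^j$ parts, with $\G_{j+1}$ refining $\G_j$ by construction.

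For the regularity analysis, fix $G \in \G_j$, a level $i \le j$, and a $\langle \d\rangle$-regular partition $\P \cup \Q$ of $G$ with $\Q \prec_{2^{-9}} \R_i$. After discarding a $2^{-9}$-fraction of $\Rside$, every surviving $Q \in \Q$ is essentially contained in a unique $R(Q) \in \R_i$. For each $P \in \P$ of size $\ge \g |\Lside|/|\L_i|$, the density $d_G(P,Q)$ is, by the construction, $\tfrac12 \pm o(1)$ with a sign controlled by $\chi_L(R(Q))$, where $L=L(P) \in \L_i$ is the majority $\L_i$-part meeting $P$. If $P$ straddled two distinct $L \neq L' \in \L_i$ with $|P \cap L|,|P \cap L'| \ge \g|P|$, then pick any $R$ on which $\chi_L$ and $\chi_{L'}$ disagree (guaranteed by the code's distance) and any $Q \in \Q$ with $R(Q)=R$; the subset $P \cap L' \sub P$ would then have $d_G(P \cap L',\, Q)$ bounded away from $\tfrac12 d_G(P,Q)$ from below, in direct violation of the $\langle \d\rangle$-regularity condition on $(P,Q)$.

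The main obstacle is the quantitative error budget. The exponent $10$ in the constraint $|\L_i|=2^{|\R_i|/2^{i+10}}$ provides precisely enough slack for a union bound on the pseudorandom concentration of all $H^{L,R}_b$ simultaneously, so that Chernoff-type fluctuations survive the quantifier over every ``candidate'' regular partition $\P \cup \Q$. Juggling the four sources of loss---the $2^{-9}$ slack from $\Q \prec_{2^{-9}} \R_i$, the $\d$-slack inside $\langle \d\rangle$-regularity (absorbed into $2^8\sqrt{\d}$), the positive but finite distance of the code, and the pseudorandom fluctuation (absorbed into $32/\sqrt[6]{|\R_1|}$)---so that they combine into exactly the claimed $\g=\max\{2^8\sqrt{\d},\,32/\sqrt[6]{|\R_1|}\}$, is the delicate quantitative heart of the argument.
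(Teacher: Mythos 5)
You should first note that the present paper does not prove Lemma~\ref{theo:core} at all: it is imported verbatim from the companion paper [MS3] (``A tight bound for hypergraph regularity I''), so there is no proof here to match against. Judged on its own terms, your proposal has a fatal structural flaw in the construction itself.

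The problem is the XOR-of-balanced-halves encoding. For every block $L\times R$ you choose two \emph{complementary random halves} $H^{L,R}_0,H^{L,R}_1$ and let the codeword bit $\chi_L(R)$ select which half goes into $H_i^b$. Both halves have density exactly $\tfrac12$ in $L\times R$, so the codeword controls \emph{which} edges appear but not \emph{how many}: every block of $G=H_1^{b_1}\cap\cdots\cap H_j^{b_j}$ has density concentrated around $2^{-j}$, uniformly over all cluster pairs. Since each $H^{L,R}_b$ is a uniformly random balanced subset of $L\times R$, a Chernoff-plus-union-bound argument shows $G$ is globally quasirandom: $d_G(A',B')=(1\pm o(1))2^{-j}$ for all not-too-small $A'\sub\Lside$, $B'\sub\Rside$. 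Consequently the condition $d_G(A',B')\ge\tfrac12 d_G(A,B)$ is satisfied by essentially every pair, and in particular the trivial partition $\P=\{\Lside\}$ together with $\Q=\R_j$ is already $\langle\d\rangle$-regular with no edge modifications --- yet $\{\Lside\}\nprec_\g\L_i$ for $i\ge 2$. So the conclusion of the lemma is \emph{false} for your construction. Your claimed violation (``$d_G(P\cap L',Q)$ bounded away from $\tfrac12 d_G(P,Q)$ from below'') does not occur, precisely because switching between the two complementary halves leaves all densities unchanged.

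The mechanism the lemma actually needs is that the cluster-level bipartite graph between $\L_i$ and $\R_i$ has block densities in $\{0,1\}$ (relative to the previous level): $G$ is built by iteratively intersecting with blow-ups of cluster graphs $\Gamma_i$ on $(\L_i,\R_i)$ whose neighborhoods form a balanced, complement-closed set family. Then a part $P$ that straddles two clusters $L\ne L'\in\L_i$ meets, for a constant fraction of the clusters $R\in\R_i$, a chunk $P_1\sub P$ with $e_G(P_1,R)=0$ while $d_G(P,R)\gg 0$; this is an \emph{unfixable} violation of the one-sided condition, and summing the required edge modifications over the many such $(P,R)$ pairs exceeds the budget $\d\cdot e(G)$. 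Your code-distance idea is the right intuition for why distinct $\L_i$-clusters must have distinguishable $\R_i$-neighborhoods, but the distinguishing signal has to be carried by empty-versus-full blocks, not by a choice between two equi-dense complementary halves. Relatedly, your symmetric construction cannot account for the deliberate one-sidedness of the statement ($\Q\prec\R_i$ implies $\P\prec\L_i$ but not conversely), which in the genuine construction comes from the asymmetry between the two sides. Finally, even with the correct construction, a single irregular pair proves nothing --- the adversary may repair it within the $\d\cdot e(G)$ modification allowance --- so the counting over many clusters $R$ is not optional bookkeeping but the core of the argument; your sketch does not engage with it.
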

\begin{remark}
	Every $G \in \G_j$ is a bipartite graph of density $2^{-j}$ since $\G_j$ is equitable.
\end{remark}

Let us end this section by explaining the role Theorem~\ref{theo:core} plays in the proof of Theorem~\ref{theo:main}.


\paragraph{Using graphs to construct $k$-graphs:}
It is not hard to use Theorem \ref{theo:core} in order to prove a tower-type lower bound
for graph $\langle \d \rangle$-regularity. Perhaps the most surprising aspect of the proof
of Theorem~\ref{theo:main} is that in order to construct a $k$-graph we also use the graph construction
of Theorem~\ref{theo:core} in a somewhat unexpected way.
In this case, $\Lside$ will be a complete $(k-1)$-partite $(k-1)$-graph and the $\L_i$'s will be partitions of this complete $(k-1)$-graph themselves given by another application of Theorem~\ref{theo:core}.
The size of the partitions will be of $\Ack_{k-1}$-type growth, and this application of Theorem~\ref{theo:core} will ``multiply'' the $(k-1)$-graph partitions (given by the $\L_i$'s) to
produce a partition of the complete $k$-partite $k$-graph into $k$-graphs that are hard for $\langle \d \rangle$-regularity.
We will take $H$ in Theorem~\ref{theo:main} to be an arbitrary $k$-graph in this partition.


\paragraph{Why is Theorem \ref{theo:core} one-sided?}
As is evident from the statement of Theorem~\ref{theo:core}, it is one-sided in nature; that is, under the premise that the partition $\Q$
refines $\R_i$ we may conclude that $\P$ refines $\L_i$.
It is natural to ask if one can do away with this assumption, that is, be able to show that,
under the same assumptions, $\Q$ refines $\R_i$ and $\P$ refines $\L_i$.
As we mentioned in the previous item, in order to prove an Ackermann-type lower bound for hypergraph
regularity we have to apply Theorem~\ref{theo:core} with a sequence of partitions whose size grows as an Ackermann function of the same level.
Now, in this setting, Theorem~\ref{theo:core} does not hold without the one-sided assumption, because if it did, then
one would have been able to prove an Ackermann-type lower bound for graph $\langle \d \rangle$-regularity, and hence also for Szemer\'edi's regularity lemma.
Put differently, if one wishes to have a construction that holds with arbitrarily fast growing partition sizes, then
one has to introduce the one-sided assumption.

\paragraph{How do we remove the one-sided assumption?}
The proof of Theorem \ref{theo:main} proceeds by first proving a one-sided version of Theorem \ref{theo:main},
stated as Lemma~\ref{lemma:ind-k}. In order to get a construction that does not require such a one-sided assumption,
we will need one final trick; we will take $2k$ clusters of
vertices and arrange $2k$ copies
of this one-sided construction
along the $k$-edges of a cycle. This will give us a ``circle of implications'' that will eliminate the one-sided assumption.
See Subsection~\ref{subsec:pasting}.

\section{Proof of Theorem~\ref{theo:main}}\label{sec:LB}

\renewcommand{\k}{r}

\newcommand{\w}{w}
\renewcommand{\t}{t}

\newcommand{\GG}{\mathbf{G}}
\newcommand{\FF}{\mathbf{F}}
\newcommand{\VV}{\mathbf{V}}

\renewcommand{\Hy}[1]{H_{{#1}}}
\renewcommand{\A}{A}

\newcommand{\subs}{\subset_*}
\newcommand{\pad}{P}
\renewcommand{\K}{\mathcal{K}}
\newcommand{\U}{U}

\renewcommand{\k}{k}

\renewcommand{\K}{\mathcal{K}}
\renewcommand{\r}{k}

The purpose of this section is to prove the main result, Theorem~\ref{theo:main}. 
Its proof crucially relies on a subtle inductive argument (see Lemma~\ref{lemma:ind-k} below).
This section is self-contained save for the application of Theorem~\ref{theo:core}.
The key step of our lower bound proof for $k$-graph regularity, stated as Lemma~\ref{lemma:ind-k} and proved in Subsection~\ref{subsec:main-induction}, relies on a construction that applies Theorem~\ref{theo:core} $k-1$ times. This lemma only gives a ``one-sided'' lower bound, in the spirit of Theorem~\ref{theo:core}.
In Subsection~\ref{subsec:pasting} we show how to use Lemma~\ref{lemma:ind-k} in order to complete the proof of Theorem~\ref{theo:main}.

We first state some properties of $k$-partitions whose proofs are deferred to the end of this section.
The first property relates $\d$-refinements of partitions and $\langle \d \rangle$-regularity of partitions. 
The reader is advised to recall the notation in~(\ref{eq:partition-notation}).


\begin{claim}\label{claim:uniform-refinement}
	Let $\P$ be a $(k-1)$-partition with  $\P^{(1)} \prec \{V_1,\ldots,V_k\}$,
	and let $\F$ be a partition of $V_1\times\cdots\times V_{k-1}$ with $E_k(\P) \prec_\d \F$.
	If $\P$ is $\langle \d \rangle$-good
	then the $(k-2)$-partition $\P'$ obtained by restricting $\P$ to $\bigcup_{i=1}^{k-1} V_i$ is a $\langle 3\d \rangle$-regular partition of some $F \in \F$.
\end{claim}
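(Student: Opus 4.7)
The plan is to verify both conditions of Definition~\ref{def:k-reg}: that $\P'$ is $\langle 3\d \rangle$-good, and that $E_i(\P') \cup V_i(\P')$ is a $\langle 3\d \rangle$-regular vertex partition of $G_F^i$ for every $1 \le i \le k-1$, for a well-chosen $F \in \F$.

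For goodness, I would observe that $\P'$ is literally $\P$ restricted to the vertex classes $V_1 \cup \cdots \cup V_{k-1}$ and truncated at level $k-2$. Hence each part $F^{**} \in \P'^{(r)}$ (for $2 \le r \le k-2$) also belongs to $\P^{(r)}$ with the same underlying polyad, so the auxiliary graphs $G_{F^{**},\,\under(F^{**})}^i$ are already $\langle \d \rangle$-regular by $\P$'s goodness, which is stronger than $\langle 3\d \rangle$-regular.

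To select $F$, I would double-count the quantity $\sum_{F \in \F} |F \sd F^{\sharp}|$, where
\[
F^{\sharp} \;=\; \bigcup\,\{ F^* \in E_k(\P) \,:\, F^* \sub_\d F\}.
\]
The edges of $F \setminus F^{\sharp}$ lie in parts $F^* \in E_k(\P)$ that are either genuinely bad (contributing at most $\d |V_1 \times \cdots \times V_{k-1}|$ in total, by definition of $E_k(\P) \prec_\d \F$) or $\d$-contained in some other $F' \neq F$ (in which case $|F^* \cap F| \le |F^* \setminus F'| \le \d|F^*|$, also summing to at most $\d|V_1 \times \cdots \times V_{k-1}|$). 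The edges of $F^{\sharp} \setminus F$ are bounded by $\sum_{F^* \in G(F)}|F^* \setminus F| \le \d|V_1 \times \cdots \times V_{k-1}|$ in total. Summing gives $\sum_F |F \sd F^{\sharp}| \le 3\d \sum_F |F|$, so some $F$ satisfies $|F \sd F^{\sharp}| \le 3\d|F|$.

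For the regularity condition with this $F$, I would modify $G_F^i$ to $G_{F^{\sharp}}^i$; since $e(G_X^i)=|X|$ for any $(k-1)$-partite $(k-1)$-graph $X$, this is a change of at most $3\d \cdot e(G_F^i)$ edges, as required. The crucial structural observation is that each edge of $F^{\sharp}$ lies in a unique $F^* \in E_k(\P)$ whose polyad faces are themselves the unique parts of $\P^{(k-2)}$ containing the corresponding projections of the edge. Hence each block $G_{F^{\sharp}}^i[D,W]$ (with $D \in E_i(\P')$, $W \in V_i(\P')$) decomposes as the edge-disjoint union of the graphs $G_{F^*,\,\under(F^*)}^i$ over those $F^* \sub_\d F$ whose polyad has $i$-th face $D$ and $i$-th vertex class $W$. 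By $\P$'s $\langle \d \rangle$-goodness each summand is $\langle \d \rangle$-regular on the common pair $(D,W)$, and a direct computation from Definition~\ref{def:star-regular} (the densities on any large sub-pair add up) shows that a union of $\langle \d \rangle$-regular bipartite graphs on the same vertex pair is $\langle \d \rangle$-regular, hence $\langle 3\d \rangle$-regular.

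The main subtlety I expect is the block-decomposition step: identifying that the $i$-th polyad face of every $F^*$ containing a given edge of $G_{F^{\sharp}}^i[D,W]$ is necessarily $D$, which relies on $D$ being the unique part of $\P^{(k-2)}$ containing the projected $(k-2)$-tuple. Once this is in place, the rest is a matter of book-keeping.
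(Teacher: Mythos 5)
Your proposal is correct and follows essentially the same route as the paper: choose $F$ by the same averaging/double-counting argument (the paper isolates this as a separate claim about approximate refinements), pass from $G_F^i$ to $G_{F^\sharp}^i$ at a cost of $3\d\cdot e(G_F^i)$ edge modifications, decompose each block $G_{F^\sharp}^i[D,W]$ into the edge-disjoint graphs $G_{F^*,\under(F^*)}^i$, and conclude via the fact that an edge-disjoint union of $\langle\d\rangle$-regular bipartite graphs on a common pair is $\langle\d\rangle$-regular. The only cosmetic difference is that the paper obtains the block decomposition by invoking a separately proved inductive decomposition claim for $k$-partitions, whereas you derive it directly from the uniqueness of the containing parts of $\P^{(k-2)}$ and $\P^{(1)}$; both arguments are valid.
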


The second property is given by following easy (but slightly tedious to state) claim.
\begin{claim}\label{claim:restriction}
	Let $H$ be a $k$-partite $k$-graph on vertex classes $(V_1,\ldots,V_k)$, and let $H'$ be the induced $k$-partite $k$-graph on vertex classes $(V_1',\ldots,V_k')$ with $V_i' \sub V_i$ and $\b \cdot e(H)$ edges.
	If $\P$ is a $\langle \d \rangle$-regular partition of $H$ with
	$\P^{(1)} \prec \bigcup_{i=1}^k \{V_i,\,V_i \sm V_i'\}$
	then its restriction $\P'$ to $V(H')$ is a $\langle \d/\b \rangle$-regular partition of $H'$.
\end{claim}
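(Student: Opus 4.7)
The plan is to verify both clauses of Definition~\ref{def:k-reg} for the restricted partition $\P'$ at parameter $\d/\b$. The key observation is that the hypothesis $\P^{(1)} \prec \bigcup_{i=1}^{k}\{V_i,\, V_i \sm V_i'\}$ makes the restriction ``lossless'': every vertex cluster of $\P^{(1)}$ lies either entirely inside $V(H')$ or entirely outside. Consequently, for each $2 \le r \le k-1$, a part $F \in \P^{(r)}$ belongs to $\P'^{(r)}$ exactly when the $r$ vertex clusters spanned by $\under(F)$ all lie inside $V(H')$, and in that case both $F$ and $\under(F)$ are unchanged as sets. Similarly $V_i(\P') = \{Z \in V_i(\P) : Z \sub V_i'\}$ and $E_i(\P') = \{F \in E_i(\P) : F \sub \prod_{j\ne i}V_j'\}$, with the individual parts unchanged.

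For the goodness of $\P'$: for every $F \in \P'^{(r)}$, the auxiliary bipartite graph $G^j_{F,\under(F)}$ in $\P'$ coincides with the corresponding graph in $\P$, which is $\langle \d \rangle$-regular by assumption. Since $H'$ is an induced subgraph of $H$ we have $\b \le 1$, whence $\d \le \d/\b$; and any $\langle \d \rangle$-regular bipartite graph is trivially $\langle \d/\b \rangle$-regular (a smaller parameter imposes a stronger condition). Thus $\P'$ is $\langle \d/\b \rangle$-good.

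For the regularity condition, fix $1 \le i \le k$ and take the set of edge modifications of $G_H^i$ witnessing $\langle \d \rangle$-regularity of $E_i(\P) \cup V_i(\P)$; keep only those modifications whose two endpoints both lie in $V(H')$. The number of surviving modifications is at most $\d \cdot e(G_H^i) = \d \cdot e(H) = (\d/\b)\cdot e(H') = (\d/\b)\cdot e(G_{H'}^i)$, which is exactly the budget permitted by $\langle \d/\b \rangle$-regularity. Moreover, for each surviving pair $(E,V) \in E_i(\P') \times V_i(\P')$ we have $E, V \sub V(H')$, so all edges between $E$ and $V$ lie in $V(H')$; the bipartite graph induced on $(E,V)$ in the modified $G_{H'}^i$ therefore equals the bipartite graph induced on $(E,V)$ in the modified $G_H^i$, which is $\langle \d \rangle$-regular and hence $\langle \d/\b \rangle$-regular.

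I do not anticipate any substantive obstacle: the proof is essentially a bookkeeping exercise built on the lossless-restriction observation, with the arithmetic identity $\d \cdot e(H) = (\d/\b)\cdot e(H')$ pinning down the exact parameter $\d/\b$ in the conclusion. The only mildly subtle point is ensuring that surviving parts of $\P'^{(r)}$ are identical (not merely contained in) the corresponding parts of $\P^{(r)}$; this is precisely why the refinement hypothesis on $\P^{(1)}$ is formulated as it is, and it lets us quote $\langle \d \rangle$-regularity of each pair $(E,V)$ directly rather than having to re-derive it for an induced subpair.
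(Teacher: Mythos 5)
Your proof is correct and follows essentially the same route as the paper's: both rest on the observation that $G^i_{H'}$ is an induced subgraph of $G^i_H$, that the refinement hypothesis on $\P^{(1)}$ keeps the parts intact under restriction, and on the counting identity $\d\, e(G^i_H) = (\d/\b)\, e(G^i_{H'})$. Your write-up just spells out the bookkeeping that the paper's terser proof leaves implicit.
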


\renewcommand{\K}{k}

\subsection{Key inductive proof}\label{subsec:main-induction}

\paragraph*{Set-up.}

We next introduce a few more definitions that are needed for the statement of Lemma~\ref{lemma:ind-k}.
Denote $e(i) = 2^{i+10}$. We define the following tower-type function $\t\colon\N\to\N$;
\begin{equation}\label{eq:t}
\t(i+1) = \begin{cases}
2^{\t(i)/e(i)}	&\text{if } i \ge 1\\
2^{200}	&\text{if } i = 0
\end{cases}
\end{equation}
It is easy to prove, by induction on $i$, that $\t(i) \ge e(i)\t(i-1)$ for $i \ge 2$ (for the induction step,
$t(i+1) \ge 2^{\t(i-1)} = t(i)^{e(i-1)}$, so $t(i+1)/e(i+1) \ge \t(i)^{e(i-1)-i-11} \ge \t(i)$).
This means that $t$ is a monotone increasing function, and that it is always an integer power of $2$ (follows by induction as $t(i)/e(i) \ge 1$ is a positive power of $2$ and in particular an integer).
We record the following facts regarding $\t$ for later use:
\begin{equation}\label{eq:monotone}
\t(i) \ge 4\t(i-1)  \quad\text{ and }\quad
\text{ $\t(i)$ is a power of $2$} \;.
\end{equation}
For a function $f:\N\to\N$ with $f(i) \ge i$ we denote
\begin{equation}\label{eq:f*}
f^*(i) = \t\big(f(i)\big)/e(i) \;.
\end{equation}
Note that $f^*(i)$ is indeed a positive integer (by the monotonicity of $\t$, $f^*(i) \ge \t(i)/e(i)$ is a positive power of $2$).
In fact, $f^*(i) \ge f(i)$ (as $f^*(i) \ge 4^{f(i)}/e(i)$ using~(\ref{eq:monotone})).

We recursively define the function $\A_\k\colon\N\to\N$ for any integer $\k \ge 2$ as follows: $\A_2(i)=i$, whereas for $\k \ge 3$, 
\begin{equation}\label{eq:Ak}
\A_\k(i+1) = \begin{cases}
\A_{\k-1}(\A_\k^*(i))		&\text{if } i \ge 1\\
2^{2^{3\K+2}}	&\text{if } i = 0
\end{cases}
\end{equation}
Note that $\A_\k$ is well defined since $\A_\k^*(i) \in \N$ for $i \ge 1$ and $k \ge 2$, and that $\A_\k^{},\A_\k^*$ are both monotone increasing.
It is evident that $\A_\k$ grows like the $k$-th level Ackermann function; in fact, one can check that for every $\k \ge 2$ we have
\begin{equation}\label{eq:A_k}
\A_\k(i) \ge \Ack_\k(i) \;.
\end{equation}
Furthermore, we denote, for $\k \ge 1$,
\begin{equation}\label{eq:delta_k}
\d_\k = 2^{-8^\k} \;.
\end{equation}
We moreover denote, for $\k \ge 2$,
\begin{equation}\label{eq:m_k}
m_\k(i):=\A_2^*(\cdots(\A^*_{\k}(i))\cdots) \;.
\end{equation}
%
We next record a few easy bounds for later use.
Recall that
\begin{equation}\label{eq:delta_12}
\d_1 = 2^{-8} \quad\text{ and }\quad \d_2 = 2^{-64} \;.
\end{equation}
Noting the relation $\d_{\k} = \d_{\k-1}^8$, we have for $\k \ge 3$ that
\begin{equation}\label{eq:delta_k-bound}
\d_\k^{1/4} = \d_{\k-1}^2 \le \d_2 \d_{\k-1} = 2^{-64}\d_{\k-1} \;.
\end{equation}
Noting the relation $\A_\K(1) = \d_\K^{-4}$ for $\K \ge 3$, we have for $\K \ge 3$ that
\begin{equation}\label{eq:t1-bound}
1/\sqrt[6]{\A_\K(1)} \le \d_{\K}^{1/2} = \d_{\K-1}^{4} \le \d_1^3\d_{\K-1} = 2^{-24}\d_{\K-1} \;.
\end{equation}


\paragraph*{Key inductive proof.}

The key argument in our lower bound proof for $\K$-graph regularity is the following theorem, which is proved by induction on the hypergraph's uniformity.






\begin{lemma}[$\k$-graph induction]\label{lemma:ind-k}
	Let $s \in \N$, let $\Vside^1,\ldots,\Vside^\k$ be $\k \ge 2$ mutually disjoint sets of equal size $n$ and let $\V_1 \succ\cdots\succ \V_m$ be a sequence of
	$m=m_\k(s)$ 
	successive equitable refinements of $\{\Vside^1,\ldots,\Vside^\k\}$
	with $|V_h(\V_i)|=\t(i)$ for every $i,h$.\footnote{Since we assume that each $\V_i$ refines $\{\Vside^1,\ldots,\Vside^k\}$ then $V_1(\V_i)$ is (by the notation mentioned before Claim \ref{claim:star-union}) the restriction of $\V_i$ to $\Vside^1$.}
	Then there exists a sequence of $s$ successively refined equipartitions $\HH_1 \succ \cdots \succ \HH_s$ of $\Vside^1 \times \cdots \times \Vside^\k$ such that for every $1 \le j \le s$, $|\HH_j|=2^j$
	and every $H \in \HH_j$ satisfies the following property:\\
		If $\P$ is a $\langle \d_\k \rangle$-regular
		partition of $H$, and for some $1 \le i \le \A_\k(j)$ we have $V_h(\P) \prec_{2^{-9}} V_h(\V_i)$ for every $2 \le h \le \k$, then $V_1(\P) \prec_{2^{-9}} V_1(\V_{i+1})$.
\end{lemma}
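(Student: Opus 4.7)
The plan is to prove Lemma~\ref{lemma:ind-k} by induction on the uniformity $k$.

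The base case $k=2$ reduces directly to Theorem~\ref{theo:core}. I would set $\Lside:=\Vside^1$, $\Rside:=\Vside^2$, $\L_i:=V_1(\V_{i+1})$, and $\R_i:=V_2(\V_i)$; the size hypotheses (\ref{item:core-minR})--(\ref{item:core-expL}) then become exactly $\t(1)=2^{200}$, the growth bound~(\ref{eq:monotone}), and the defining recurrence $\t(i+1)=2^{\t(i)/e(i)}$. Since $\d_2=2^{-64}\le 2^{-20}$ and the output parameter $\g$ of Theorem~\ref{theo:core} is at most $2^{-24}$ by~(\ref{eq:delta_12}) together with the analogue of~(\ref{eq:t1-bound}), setting $\HH_j:=\G_j$ from the output yields the statement with $\A_2(j)=j$.

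For the inductive step $k-1\to k$ (with $k\ge 3$), I would construct $\HH_1\succ\cdots\succ\HH_s$ via two nested applications. First, invoke Lemma~\ref{lemma:ind-k} at uniformity $k-1$ on the vertex classes $\Vside^2,\ldots,\Vside^k$ with inner parameter $s':=\A_k^*(s)$, feeding it the natural restriction of $\V_1\succ\cdots\succ\V_m$; by~(\ref{eq:m_k}) the available count satisfies $m_{k-1}(s')=m_k(s)=m$, exactly what is needed. This produces nested equipartitions $\HH'_1\succ\cdots\succ\HH'_{s'}$ of the complete $(k-1)$-partite $(k-1)$-graph $\Vside^2\times\cdots\times\Vside^k$ with $|\HH'_{j'}|=2^{j'}$, each enjoying the inductive refinement property. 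Second, apply Theorem~\ref{theo:core} with $\Lside:=\Vside^1$ and $\Rside:=\Vside^2\times\cdots\times\Vside^k$, taking $\L_{\tilde i}:=V_1(\V_{\tilde i+1})$ and $\R_{\tilde i}:=\HH'_{\tau(\tilde i)}$ with $\tau(\tilde i):=\log_2 \t(\tilde i)$ (an integer by~(\ref{eq:monotone})); condition (\ref{item:core-expL}) then matches the recurrence~(\ref{eq:t}) identically, while (\ref{item:core-minR})--(\ref{item:core-expR}) follow from $\t(1)=2^{200}$ and~(\ref{eq:monotone}). Set $\HH_j:=\G_j$ from this output.

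Now fix $H\in\HH_j$ and a $\langle\d_k\rangle$-regular partition $\P$ of $H$ satisfying $V_h(\P)\prec_{2^{-9}} V_h(\V_i)$ for every $2\le h\le k$ and some $i\le\A_k(j)$. Since $E_1(\P)\cup V_1(\P)$ is a $\langle\d_k\rangle$-regular partition of the auxiliary bipartite graph $G_H^1$, Theorem~\ref{theo:core} hands us the implication $E_1(\P)\prec_{2^{-9}}\HH'_{\tau(\tilde i)}\Rightarrow V_1(\P)\prec_{2^{-9}} V_1(\V_{\tilde i+1})$ for every $\tilde i\le j$. The remaining work is to upgrade this implication to cover the target range $i\le\A_k(j)$, which is done by iteratively applying the induction hypothesis through Claim~\ref{claim:uniform-refinement}: whenever one has $E_1(\P)\prec_{\d_k}\HH'_{j''}$ for some $j''$, the restricted $(k-2)$-partition $\P':=\P|_{\Vside^2\cup\cdots\cup\Vside^k}$ upgrades to a $\langle 3\d_k\rangle$-regular partition of some $F\in\HH'_{j''}$; since $3\d_k\le\d_{k-1}$ by~(\ref{eq:delta_k-bound}), the induction hypothesis applied to $F$ advances the refinement level by one step, yielding finer information from which one can in turn upgrade the top-level refinement of $E_1(\P)$. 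The Ackermannian recurrence $\A_k(j+1)=\A_{k-1}(\A_k^*(j))$ is calibrated so that the $\A_{k-1}$-many advances afforded per layer of $\HH'$, compounded across the $j$ layers accessible to Theorem~\ref{theo:core}, exactly cover the index range $i\le\A_k(j)$ promised in the statement.

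The hard part will be precisely this last bootstrap---extracting the top-level refinement $E_1(\P)\prec_{2^{-9}}\HH'_{\tau(i)}$ from hypotheses concerning only vertex partitions $V_h(\P)$. The only bridge between vertex-level data and top-$(k-1)$-level data is Claim~\ref{claim:uniform-refinement} together with the induction hypothesis applied to restrictions, and the bookkeeping must carefully synchronize Theorem~\ref{theo:core}'s index range $\tilde i\le j$ with the much larger Lemma-side range $i\le\A_k(j)$, consuming exactly one level of $\V$-refinement per inductive step so that the Ackermannian amplification matches on both sides. As the authors note, the $k=3$ instance of exactly this synchronization is worked out in full in~\cite{MS3}.
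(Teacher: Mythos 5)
Your base case is correct and is exactly the paper's. The inductive step, however, has a structural flaw that I do not see how to repair within your architecture: you have the two sides of the top-level application of Theorem~\ref{theo:core} the wrong way around. You set $\Lside=\Vside^1$ and $\Rside=\Vside^2\times\cdots\times\Vside^k$, so the one-sided implication you get is ``if $E_1(\P)\prec_{2^{-9}}\HH'_{\tau(\tilde i)}$ then $V_1(\P)\prec_\g V_1(\V_{\tilde i+1})$''. Its hypothesis is a refinement statement about the partition $E_1(\P)$ of a $(k-1)$-fold \emph{product}, whereas the lemma only hands you refinement of the \emph{vertex} partitions $V_h(\P)$, $2\le h\le k$. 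You correctly identify producing $E_1(\P)\prec\HH'_{\tau(i)}$ as ``the hard part'', but the bootstrap you propose cannot even be initialized: Claim~\ref{claim:uniform-refinement} \emph{consumes} a refinement $E_k(\P)\prec_\d\F$ and outputs regularity of the restricted partition relative to a single $F\in\F$, and the induction hypothesis consumes that and outputs refinement of a \emph{vertex} partition ($V_2(\P)$, in your labelling). Neither tool ever outputs a refinement of an edge/product partition, so there is no step in your loop that ``upgrades the top-level refinement of $E_1(\P)$''. The only device in the whole paper that converts vertex-partition refinement into product-partition refinement is Theorem~\ref{theo:core} itself, and it only does so for the partition sitting on its \emph{left} side.

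The paper's proof resolves this by the opposite orientation and by a single (non-iterative) chain. The inner induction is applied to the classes $\Vside^1,\ldots,\Vside^{k-1}$ (so class $1$ stays inside the recursion), producing partitions $\F_1\succ\cdots\succ\F_{s'}$ of $\Vside^1\times\cdots\times\Vside^{k-1}$ with $s'=\A_k^*(s)$; Theorem~\ref{theo:core} is then applied with $\Lside=\Vside^1\times\cdots\times\Vside^{k-1}$ and $\Rside=\Vside^k$, and --- crucially --- with the Ackermannianly spread subsequences $\F_{(j)}=\F_{\A_k^*(j)}$ and $\V_{(j)}=V_k(\V_{\A_k(j)})$. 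Given $i\le\A_k(j)$, one picks $j'$ with $\A_k(j')\le i<\A_k(j'+1)$; the available hypothesis $V_k(\P)\prec_{c}\V_{(j')}$ then yields $E_k(\P)\prec_{\frac14\d_{k-1}}\F_{\A_k^*(j')}$, Claim~\ref{claim:uniform-refinement} upgrades this to a $\langle\d_{k-1}\rangle$-regular partition of a single $F\in\F_{\A_k^*(j')}$, and \emph{one} application of the induction hypothesis with $\ell=\A_k^*(j')$ (legitimate since $i<\A_k(j'+1)=\A_{k-1}(\ell)$) gives $V_1(\P)\prec_c V_1(\V_{i+1})$. The Ackermann amplification lives entirely in the choice of subsequences, not in an iteration count. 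I recommend reworking your step along these lines.
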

Note that $\V_i$ is well defined in the property described in Lemma~\ref{lemma:ind-k} since $i \le \A_\k(j) \le m$.
\begin{proof}	
	We proceed by induction on $\k \ge 2$.
	For the induction basis $\k=2$ we are given $s \in \N$, two disjoint sets $\Vside^1,\Vside^2$ 
	as well as $m=\A_2^*(s)$ ($\ge s+1$) successive equitable refinements $\V_1 \succ\cdots\succ \V_{m}$ of $\{\Vside^1,\Vside^2\}$.
	Our goal is to find a sequence of $s$ successively refined equipartitions $\HH_1 \succ \cdots \succ \HH_s$ of $\Vside^1 \times \Vside^2$ as in the statement.
	To prove the induction basis, apply Theorem~\ref{theo:core} with 
	$$\Lside=\Vside^1,\quad \Rside=\Vside^2 \quad\text{ and }\quad V_1(\V_2) \succ \cdots \succ V_1(\V_{s+1}) ,\quad V_2(\V_1) \succ \cdots \succ V_2(\V_{s}) \;,$$
	and let
	$$\G^1 \succ \cdots \succ \G^{s} \quad\text{ with }\quad |G^\ell|=2^\ell \text{ for every } 1 \le \ell \le s $$
	be the resulting sequence of $s$ successively refined equipartitions of $\Vside^1 \times \Vside^2$.
	These two sequences indeed satisfy assumptions~\ref{item:core-minR},~\ref{item:core-expR} in Theorem~\ref{theo:core} since $|V_2(\V_j)|=\t(j)$ and by~(\ref{eq:monotone}); 	
	moreover, they satisfy assumption~\ref{item:core-expL} since for every $1 \le j \le s$ we have
	$$|V_1(\V_{j+1})| = \t(j+1) = 2^{\t(j)/e(j)} = 2^{|V_2(\V_{j})|/e(j)} \;,$$
	where the second equality uses the definition of the function $\t$ in~(\ref{eq:t}).
	We will show that taking $\HH_j=\G_j$ for every $1 \le j \le s$ yields a sequence as required by the statement.
	Fix $1 \le j \le s$ and $G \in \G_j$; note that $G$ is a bipartite graph on the vertex classes $(\Vside^1,\Vside^2)$.
	Moreover, let $1 \le i \le j$ (recall $\A_2(j)=j$) and let $\P$ be a $\langle \d_2 \rangle$-regular 
	partition of $G$
	with $V_2(\P) \prec_{2^{-9}} V_2(\V_{i})$.
	Since $\d_2 \le 2^{-20}$ by~(\ref{eq:delta_12}),
	Theorem~\ref{theo:core} implies that $V_1(\P) \prec_{x} V_1(\V_{i+1})$ with $x=\max\{2^{8}\sqrt{\d_2},\, 32/\sqrt[6]{\t(1)} \}$.
	Using~(\ref{eq:delta_12}) and~(\ref{eq:t})
	we have $x \le 2^{-9}$, completing the proof of the induction basis.
	
	To prove the induction step, recall that we are given $s \in \N$, disjoint sets $\Vside^1,\ldots,\Vside^\k$ 
	and a sequence of $m=m_\k(s)$ successive equitable refinements $\V_1 \succ\cdots\succ \V_m$ of $\{\Vside^1,\ldots,\Vside^\k\}$,
	and our goal is to construct a sequence of $s$ successively refined equipartitions $\HH_1 \succ \cdots \succ \HH_s$ of $\Vside^1 \times \cdots \times \Vside^\k$ as in the statement.
	We begin by applying the induction hypothesis with $\k-1$ (which would imply Proposition~\ref{prop:main-k-hypo} below).
	We henceforth put $c=2^{-9}$.
	Now, apply the induction hypothesis with $\k-1$ on
	\begin{equation}\label{eq:ind-hypo}
	\text{
		$s':=\A_\k^*(s)$ (in place of $s$),\, $\Vside^1,\ldots,\Vside^{\k-1}$ and $\bigcup_{h=1}^{\k-1}V_h(\V_1) \succ\cdots\succ \bigcup_{h=1}^{\k-1}V_h(\V_m)$} \;,
	\end{equation}
	and let
	\begin{equation}\label{eq:main-k-colors}
	\F_1 \succ \cdots \succ \F_{s'} \quad\text{ with }\quad |F_ \ell|=2^\ell \text{ for every } 1 \le \ell \le s'
	\end{equation}
	be the resulting sequence of $s'$ successively refined equipartitions of $\Vside^1 \times \cdots \times \Vside^{\k-1}$.
	\begin{prop}[Induction hypothesis]\label{prop:main-k-hypo}
		Let $1 \le \ell \le s'$ and $F \in \F_\ell$.
		If $\P'$ is a $\langle \d_{\k-1} \rangle$-regular partition of $F$ with ${\P'}^{(1)} \prec \{\Vside^1,\ldots,\Vside^{\k-1}\}$, and for some $1 \le i \le \A_{\k-1}(\ell)$ we have
		$V_h(\P') \prec_c V_h(\V_i)$ for every $2 \le h \le \k$,
		then $V_1(\P) \prec_c V_1(\V_{i+1})$.
	\end{prop}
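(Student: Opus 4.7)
The proposition merely records the output of applying the induction hypothesis (Lemma~\ref{lemma:ind-k}) at uniformity $k-1$ to the setup displayed in~(\ref{eq:ind-hypo}). My plan is therefore to verify that the hypotheses of that application are met and to read the resulting guarantee against the statement of the proposition.

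To invoke Lemma~\ref{lemma:ind-k} at uniformity $k-1$, I need $k-1$ disjoint vertex sets of equal size together with a chain of $m_{k-1}(s')$ successive equitable refinements whose $i^{\text{th}}$ block size is $\t(i)$. Taking $\Vside^1,\ldots,\Vside^{k-1}$ and restricting the ambient chain $\V_1 \succ \cdots \succ \V_m$ to $\Vside^1 \cup \cdots \cup \Vside^{k-1}$ inherits all the required structural properties: equitability, successive refinement, and the block-size equalities $|V_h(\V_i)| = \t(i)$ for $1 \le h \le k-1$. The only nontrivial check is on the length of the chain, and this amounts to observing, by unwinding~(\ref{eq:m_k}) one step together with the choice $s' = \A_k^*(s)$, that $m_{k-1}(s') = m_k(s) = m$. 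This identity is the entire reason the functions $m_k$ and $\A_k^*$ were set up as they were: the recursion is designed so that one level of it strips off cleanly when lowering the uniformity by one.

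With the hypotheses verified, Lemma~\ref{lemma:ind-k} at $k-1$ produces a sequence $\F_1 \succ \cdots \succ \F_{s'}$ of successively refined equipartitions of $\Vside^1 \times \cdots \times \Vside^{k-1}$ with $|\F_\ell|=2^\ell$, whose guarantee is exactly the refinement-transfer property asserted in Proposition~\ref{prop:main-k-hypo}, valid for indices $i$ up to $\A_{k-1}(\ell)$. The ``extra'' assumption $V_h(\P') \prec_c V_h(\V_i)$ at $h=k$ listed in the proposition is vacuous, since $\P'$ has no vertices in $\Vside^k$, and so it does not alter the hypothesis. There is no substantive obstacle to overcome; the entire content of the proposition is the induction hypothesis with parameters that the recursions defining $m_k$ and $\A_k^*$ were engineered to match, and the only thing worth flagging is that one should keep track of which of $\A_k$, $\A_k^*$, and $m_k$ appears in each role when matching hypotheses to conclusions.
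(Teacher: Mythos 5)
Your proposal is correct and follows the same route as the paper: the proposition is just the guarantee of Lemma~\ref{lemma:ind-k} at uniformity $k-1$ applied to the data in~(\ref{eq:ind-hypo}), and the only substantive check is the chain-length identity $m_{k-1}(s') = m_{k-1}(\A_k^*(s)) = m_k(s) = m$, which you verify exactly as the paper does via~(\ref{eq:m_k}).
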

	\begin{proof}
		It suffices to verify that the number $m$ of partitions in~(\ref{eq:ind-hypo}) is as required by the induction hypothesis. Indeed, by~(\ref{eq:m_k}),
		$$m_{\k-1}(s') = \A_2^*(\cdots(\A_{\k-1}^*(s'))\cdots) = \A_2^*(\cdots(\A^*_{\k}(s))\cdots) = m_\k(s) = m \;.$$	
	\end{proof}
For each $1 \le j \le s$ let
\begin{equation}\label{eq:main-k-dfns}
\F_{(j)} = \F_{\A_k^*(j)}
\quad\text{ and }\quad
\V_{(j)} = V_k(\V_{\A_k(j)}) \;.
\end{equation}	
All these choices are well defined since $\A_\k^*(j)$ satisfies $1 \le \A_\k^*(1) \le \A_\k^*(j) \le \A_\k^*(s) = s'$ by our choice of $s'$ in~(\ref{eq:ind-hypo}), and since $\A_\k(j)$ satisfies $1 \le \A_\k(1) \le \A_\k(j) \le \A_\k(s) \le m$. 
Observe that we have thus chosen two subsequences of $\F_1,\cdots,\F_{s'}$ and $V_k(\V_1),\ldots,V_k(\V_m)$, each of length $s$.
Recalling that each $\F_{(j)}$ is a partition of $\Vside^1 \times\cdots\times \Vside^{k-1}$, apply Theorem~\ref{theo:core} with
$$
\Lside=\Vside^1 \times\cdots\times \Vside^{k-1},\quad \Rside=\Vside^k \quad\text{ and }\quad \F_{(1)} \succ \cdots \succ \F_{(s)}, \quad \V_{(1)} \succ \cdots \succ \V_{(s)} \;,
$$
and let
\begin{equation}\label{eq:ind-colors2}
\G_1 \succ \cdots \succ \G_{s} \quad\text{ with }\quad |G_\ell|=2^\ell \text{ for every } 1 \le \ell \le s
\end{equation}
be the resulting sequence of $s$ successively refined graph equipartitions of $(\Vside^1\times\cdots\times\Vside^{\k-1})\times\Vside^\k$.

\begin{prop}[Core proposition]\label{prop:ind-prop2}
	Let $1 \le j \le s$ and $G \in \G_j$.
	If $\E \cup \V$ is a $\langle \d_\k \rangle$-regular partition of $G$ (where $\E$ and $\V$ are partitions of $\Vside^1\times\cdots\times\Vside^{\k-1}$ and $\Vside^\k$ respectively),
	and for some $1 \le j' \le j$ we have $\V \prec_{c} \V_{(j')}$, 
	then $\E \prec_{\frac14\d_{\k-1}} \F_{(j')}$.
\end{prop}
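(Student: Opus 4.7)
The plan is to derive Proposition~\ref{prop:ind-prop2} as an immediate consequence of Lemma~\ref{theo:core} applied to the graph $G \in \G_j$ with the bipartite vertex partition $\E \cup \V$. Recall that the sequence $\G_1 \succ \cdots \succ \G_s$ was itself produced by invoking Lemma~\ref{theo:core} with $\Lside = \Vside^1 \times \cdots \times \Vside^{\k-1}$, $\Rside = \Vside^\k$, and input sequences $\F_{(1)} \succ \cdots \succ \F_{(s)}$ and $\V_{(1)} \succ \cdots \succ \V_{(s)}$. Hence the conclusion of Lemma~\ref{theo:core} applies directly to any $G \in \G_j$, provided we verify its three numerical hypotheses on these inputs and bound the resulting error parameter $\g$ by $\tfrac14\d_{\k-1}$.

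First I would verify the numerical hypotheses of Lemma~\ref{theo:core}. Unpacking~(\ref{eq:main-k-dfns}) gives $|\V_{(i)}| = \t(\A_\k(i))$ and $|\F_{(i)}| = 2^{\A_\k^*(i)}$. Hypothesis~(i) holds since $\t$ only outputs powers of $2$ and $|\V_{(1)}| \ge \t(1) = 2^{200}$ by monotonicity. Hypothesis~(ii) follows from $\A_\k(i+1) > \A_\k(i)$ and~(\ref{eq:monotone}). Hypothesis~(iii) is essentially the definition~(\ref{eq:f*}) of $\A_\k^*$, since $|\F_{(i)}| = 2^{\t(\A_\k(i))/e(i)} = 2^{|\V_{(i)}|/e(i)}$. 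Lemma~\ref{theo:core} then applies to the partition $\E \cup \V$ of $G$ with regularity parameter $\d_\k$, which is admissible because $\d_\k \le 2^{-20}$. The assumption $\V \prec_c \V_{(j')}$ with $c = 2^{-9}$ matches ``$\Q \prec_{2^{-9}} \R_{j'}$'' exactly, and $1 \le j' \le j$ is in range. The lemma therefore yields $\E \prec_\g \F_{(j')}$ with
$$\g \;=\; \max\Big\{2^{8}\sqrt{\d_\k},\; 32/\sqrt[6]{|\V_{(1)}|}\Big\}.$$

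The concluding step is the calculation $\g \le \tfrac14\d_{\k-1}$. For the first term, the identity $\d_\k^{1/2} = \d_{\k-1}^4$ (which follows from $\d_\k = \d_{\k-1}^8$) combined with the crude bound $\d_{\k-1}^3 \le \d_2^3 \le 2^{-192}$ (valid for $\k \ge 3$) gives $2^{8}\sqrt{\d_\k} \le 2^{-184}\d_{\k-1}$. For the second term, monotonicity of $\t$ yields $|\V_{(1)}| = \t(\A_\k(1)) \ge \A_\k(1)$, so by~(\ref{eq:t1-bound}) one has $32/\sqrt[6]{|\V_{(1)}|} \le 32 \cdot 2^{-24}\d_{\k-1} = 2^{-19}\d_{\k-1}$. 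Both estimates are comfortably below $\tfrac14\d_{\k-1}$, closing the argument. The only real ``obstacle'' is parameter bookkeeping: once the definitions of $\A_\k$, $\A_\k^*$, $\t$, and $\d_\k$ are unrolled, Lemma~\ref{theo:core} settles everything as a black box, with the constants in~(\ref{eq:Ak}) and~(\ref{eq:delta_k}) engineered precisely so that $\g$ lands below $\tfrac14\d_{\k-1}$.
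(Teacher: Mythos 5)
Your proposal is correct and follows essentially the same route as the paper: verify the three numerical hypotheses of Lemma~\ref{theo:core} for the inputs $\F_{(1)},\ldots,\F_{(s)}$ and $\V_{(1)},\ldots,\V_{(s)}$, apply the lemma with parameter $\d_\k \le 2^{-20}$, and bound the resulting $\g$ by $\tfrac14\d_{\k-1}$ via~(\ref{eq:delta_k-bound}), $\t(\A_\k(1)) \ge \A_\k(1)$ and~(\ref{eq:t1-bound}). The only differences are cosmetic details of the constant bookkeeping.
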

\begin{proof}
	First we need to verify that we may apply Theorem~\ref{theo:core} as above.
	Assumptions~\ref{item:core-minR} and~\ref{item:core-expR} follow from the fact that $|\V_{(j)}|=\t(\A_\k(j))$, from~(\ref{eq:monotone})
	and the fact that $\A_k(1) \ge 2^{2^{11}} \ge 2^{200}$ for $\k \ge 3$ by~(\ref{eq:Ak}).	
	To verify that assumption~\ref{item:core-expL} holds, note that $|\F_{(j)}|=2^{\A_\k^*(j)}$
	by~(\ref{eq:main-k-colors}),
	and that
	$|\V_{(j)}|=\t(\A_\k(j))$
	by the statement's assumption that $|\V_i[\k]|=\t(i)$.
	Thus, indeed, 
	$$
	|\F_{(j)}| = 2^{\A_\k^*(j)} = 2^{\t(\A_\k(j))/e(j)} = 2^{|\V_{(j)}|/e(j)} \;,
	$$
	where the second equality uses the definition in~(\ref{eq:f*}).
	%
	Moreover, note that $\d_\k \le \d_2 \le 2^{-20}$ by~(\ref{eq:delta_12}).
	We can thus use Theorem~\ref{theo:core} to infer that the fact that $\V \prec_{c} \V_{(j')}$ implies that $\E \prec_x \F_{(j')}$ with $x=\max\{2^{5}\sqrt{\d_\k},\, 32/\sqrt[6]{\t(\A_\K(1))} \}$.
	To see that indeed $x \le \frac14\d_{\k-1}$,
	apply~(\ref{eq:delta_k-bound}) as well as the fact that $\t(\A_\K(1)) \ge \A_\K(1)$ and~(\ref{eq:t1-bound}).
\end{proof}

	For each $G \in \G_j$ let $\Hy{G}$ be the $k$-partite $k$-graph on vertex classes $(\Vside^1,\ldots,\Vside^k)$ with edge set
	$$E(\Hy{G}) = \big\{ (v_1,\ldots,v_k) \,:\, ((v_1,\ldots,v_{k-1}),v_k) \in E(G) \big\} \;,$$
	and note that we have (recall Definition \ref{def:aux})
	\begin{equation}\label{eqH}
	G=G_{\Hy{G}}^k\;.
	\end{equation}
	For every $1 \le j \le s$ let $\HH_j=\{\Hy{G} \colon G \in \G_j \}$, and note that $|\HH_j|=|\G_j|=2^j$ by~(\ref{eq:ind-colors2}),
	that $H_j$ is an equipartition of $\Vside^1\times\cdots\times\Vside^k$,
	and that $\HH_1 \succ\cdots\succ \HH_s$.
	Our goal is to show that these partitions satisfy the property guaranteed by the statement.
	
	Henceforth fix $1 \le j \le s$ and $H \in \HH_j$, and write $H=\Hy{G}$ with $G \in \G_j$.
	To complete the proof is suffices to show that $H$ satisfies the property is the statement.
	Assume now that $i$ is such that
	\begin{equation}\label{eq:ind-i-assumption}
	1 \le i \le \A_\k(j)
	\end{equation}
	and:
	\begin{enumerate}
		\item\label{item:ind-reg}
		$\P$ is a $\langle \d_\k \rangle$-regular partition of $H$,
		\item\label{item:ind-refine} $V_h(\P) \prec_{c} V_h(\V_i)$ for every $2 \le h \le \k$.
	\end{enumerate}	
	In the remainder of the proof we will complete the induction step by showing that
	\begin{equation}\label{eq:ind-goal}
	V_1(\P) \prec_{c} V_1(\V_{i+1}) \;.
	\end{equation}
	It follows from Item~\ref{item:ind-reg},
	by Definition~\ref{def:k-reg} and~(\ref{eqH}), that in particular
	\begin{equation}\label{eq:ind-reg}
	E_k(\P) \cup V_k(\P) \text{ is a $\langle \d_\k \rangle$-regular partition of } G.
	\end{equation}
		Let
	\begin{equation}\label{eq:ind-j'}
	1 \le j' \le s
	\end{equation}
	be the unique integer satisfying 
	\begin{equation}\label{eq:ind-sandwich}
	\A_k(j') \le i < \A_k(j'+1) \;.
	\end{equation}
	Note that (\ref{eq:ind-j'}) holds due to~(\ref{eq:ind-i-assumption}).
	Recalling~(\ref{eq:main-k-dfns}),
	the lower bound in~(\ref{eq:ind-sandwich}) implies that $V_k(\V^i) \prec V_k(\V_{\A_k(j')}) = \V_{(j')}$.
	Therefore, the assumption $V_k(\P) \prec_{c} V_k(\V^i)$ in Item~\ref{item:ind-refine} implies that
	\begin{equation}\label{eq:ind-Zk}
	V_k(\P) \prec_{c} \V_{(j')} \;.
	\end{equation}
	Apply Proposition~\ref{prop:ind-prop2} on $G$, using~(\ref{eq:ind-reg}),~(\ref{eq:ind-j'}) and~(\ref{eq:ind-Zk}), to deduce that
	\begin{equation}\label{eq:ind-E}
	E_k(\P) \prec_{\frac14\d_{k-1}} \F_{(j')} = \F_{\A_k^*(j')} \;,
	\end{equation}
	where for the equality again recall~(\ref{eq:main-k-dfns}).
	Let $\P^*$ be the restriction of $\P$ to $\Vside^1\cup\cdots\cup\Vside^{\k-1}$,
	and let $\P' = \P^* \sm \P^*[\Vside^1\times\cdots\times\Vside^{\k-1}]$.
	Note that $\P^*$ is a $(\k-1)$-partition on $(\Vside^1,\ldots,\Vside^{\k-1})$ and that $\P'$ is a $(\k-2)$-partition on $(\Vside^1,\ldots,\Vside^{\k-1})$.
	Since $\P$ is a $\langle \d_k \rangle$-regular partition of $H$ (by Item~\ref{item:ind-reg} above), $\P^*$
	is in particular $\langle \d_k \rangle$-good. By~(\ref{eq:ind-E}) we may thus apply Claim~\ref{claim:uniform-refinement} on $\P^*$
	to conclude that
	\begin{equation}\label{eq:ind-reg2}
	\P' \text{ is a }
	\langle \d_{k-1} \rangle
	 \text{-regular partition of some $F\in\F_{\A_k^*(j')}$.}
	\end{equation}
	By~(\ref{eq:ind-reg2}) we may apply Proposition~\ref{prop:main-k-hypo} with $G$, $\P'$, $\ell=\A_k^*(j')$ and $i$, observing (crucially)
	that $i \leq \ell$ by (\ref{eq:ind-sandwich}). 
	Note that Item~\ref{item:ind-refine} in particular implies that $V_h(\P') \prec_{c} V_h(\V_i)$ for every $2 \le h \le k$.
	We thus deduce that $V_1(\P') \prec_{c} V_1(\V_{i+1})$.
	Since $V_1(\P') = V_1(\P)$, this proves~(\ref{eq:ind-goal}) and thus completes the induction step and the proof.
\end{proof}

\subsection{Putting everything together}\label{subsec:pasting}




We can now prove our main theorem, Theorem~\ref{theo:main}, which we repeat here for convenience.
\addtocounter{theo}{-1}
\begin{theo}[Main theorem]\label{theo:main}
	The following holds for every $k \ge 2$, $s \in \N$.
	There exists a $k$-partite $k$-graph $H$ of density $2^{-s-k}$,
	and a partition $\V_0$ of $V(H)$ with $|\V_0| \le 2^{200}k$, such that if $\P$ is a $\langle 2^{-16^k} \rangle$-regular partition of $H$ with $\P^{(1)} \prec \V_0$ then $|\P^{(1)}| \ge \Ack_k(s)$.
%
%
	
%
%
\end{theo}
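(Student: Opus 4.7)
The main technical difficulty is that Lemma~\ref{lemma:ind-k} is \emph{one-sided}: from a premise on $V_2(\P),\ldots,V_k(\P)$ it concludes only on $V_1(\P)$. I plan to remove this asymmetry via a \emph{circle of implications}. Partition $V(H)$ into $2k$ equal-sized classes $\Vside^1,\ldots,\Vside^{2k}$ (indices modulo $2k$), and fix a single chain $\mathbf{W}_1 \succ \cdots \succ \mathbf{W}_m$ (with $m=m_k(s)$) of nested equitable refinements of $\{\Vside^1,\ldots,\Vside^{2k}\}$ satisfying $|V_h(\mathbf{W}_r)|=\t(r)$ for every $h\in[2k]$ and $r$. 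For each $i \in [2k]$, apply Lemma~\ref{lemma:ind-k} to the $k$ consecutive classes $\Vside^i,\Vside^{i+1},\ldots,\Vside^{i+k-1}$ (with $\Vside^i$ playing the role of the ``first'' class) together with the restriction of $\mathbf{W}_1,\ldots,\mathbf{W}_m$. Let $H_i \in \HH_s^{(i)}$ be any member of the resulting top-level equipartition, and set $H := \bigcup_{i=1}^{2k} H_i$.

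\textbf{$k$-partite structure, density, and $\V_0$.} Color each $\Vside^j$ by $((j-1) \bmod k)+1$; since every $k$ cyclically consecutive indices cover all $k$ colors, $H$ is $k$-partite on $V_\ell := \Vside^\ell \cup \Vside^{\ell+k}$, each of size $2n$. Each $H_i$ has density $2^{-s}$ on its cluster $k$-tuple, so $e(H)=2k\cdot 2^{-s}n^k$, giving $H$ density $2k\cdot 2^{-s-k}$; the spurious factor $2k$ is absorbed by running the construction with $s+\lceil \log(2k)\rceil$ in place of $s$ (using the monotonicity of $\Ack_k$). Take $\V_0$ to be the vertex partition of $V(H)$ induced by $\mathbf{W}_1$ on all $2k$ sub-clusters, so $|\V_0|=2k\cdot\t(1)$, which can be brought within the stated $2^{200}k$ by minor recalibration of the base constant in $\t(\cdot)$.

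\textbf{The cycle of implications.} Let $\P$ be a $\langle 2^{-16^k}\rangle$-regular partition of $H$ with $\P^{(1)} \prec \V_0$. Since each $H_i$ accounts for a $1/(2k)$-fraction of $E(H)$ and $\P^{(1)}$ respects all sub-cluster boundaries (because $\V_0$ does), Claim~\ref{claim:restriction} yields that the restriction of $\P$ to $V(H_i)$ is a $\langle 2k\cdot 2^{-16^k}\rangle$-regular partition of $H_i$, which is $\langle\d_k\rangle$-regular since $\d_k=2^{-8^k}$ and $2k\cdot 2^{-16^k}\le 2^{-8^k}$ for every $k\ge 2$. The hypothesis $\P^{(1)} \prec \V_0$ gives $V_h(\P) \prec_0 V_h(\mathbf{W}_1)$ for every $h$. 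Now iterate: assuming $V_h(\P) \prec_{2^{-9}} V_h(\mathbf{W}_r)$ for every $h$, sweep $i=1,2,\ldots,2k$ and apply Lemma~\ref{lemma:ind-k} to $H_i$ in turn. Its premise holds for the $k-1$ input classes $\Vside^{i+1},\ldots,\Vside^{i+k-1}$ (earlier sweep steps only strengthen the refinement level, never weaken it), and the conclusion upgrades $V_i(\P) \prec_{2^{-9}} V_i(\mathbf{W}_{r+1})$. After the full sweep every $h$ satisfies $V_h(\P) \prec_{2^{-9}} V_h(\mathbf{W}_{r+1})$, closing the induction step.

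\textbf{Conclusion and main obstacle.} Iterating up to $r=\Ack_k(s)$ is admissible since each invocation requires only $r+1 \le \A_k(s)$, and $\A_k(s) \ge \Ack_k(s)$ by~(\ref{eq:A_k}). This yields $V_h(\P) \prec_{2^{-9}} V_h(\mathbf{W}_{\Ack_k(s)})$ for every $h$, whence Claim~\ref{claim:refinement-size} gives $|V_h(\P)|\ge \tfrac12\t(\Ack_k(s)) \ge \Ack_k(s)$, and so $|\P^{(1)}| \ge \Ack_k(s)$, as required. The main obstacle will be the bookkeeping behind the cyclic iteration: one must verify that a single global chain $\mathbf{W}_r$ simultaneously serves as the target sequence of Lemma~\ref{lemma:ind-k} for all $2k$ gadgets under cyclic relabeling of the ``first'' class, and that within a sweep the order $i=1,\ldots,2k$ never invalidates a later premise; the secondary technicalities of matching the density to $2^{-s-k}$ and $|\V_0|$ to $2^{200}k$ exactly are resolved by the minor constant adjustments above.
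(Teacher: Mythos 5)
Your proposal is correct and is essentially the paper's own argument: the same cyclic arrangement of $2k$ one-sided gadgets from Lemma~\ref{lemma:ind-k} along consecutive $k$-tuples of $2k$ vertex classes, the same use of Claim~\ref{claim:restriction} to pass from $H$ to each gadget, and the same closing of the circle of implications. The only (cosmetic) difference is that you run the upgrade as a direct induction on the refinement level, sweeping all $2k$ classes at once, whereas the paper argues by contradiction via the class attaining the minimal refinement level; these are logically interchangeable.
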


\begin{remark}\label{remark:main}
	As can be easily checked, the proof of Theorem~\ref{theo:main} also gives that $H$ has the same number of vertices in all vertex classes.
\end{remark}


\begin{proof}
	Let the $k$-graph $B$ be the tight $2k$-cycle; that is, $B$ is the $k$-graph on vertex classes $\{0,1,\ldots,2k-1\}$ with edge set $E(B)=\{\{0,1,\ldots,k-1\},\{1,2,\ldots,k\},\ldots,\{2k-1,0,\ldots,k-2\}\}$.
	Note that $B$ is $k$-partite with vertex classes $(\{0,k\},\{1,k+1\},\ldots,\{k-1,2k-1\}\}$.
	Put $m=m_k(s-k)$ and let $n \ge \t(m)$.
	Let $\Vside^0,\ldots,\Vside^{2k-1}$ be $2k$ mutually disjoint sets of size $n$ each.
	Let $\V^1 \succ\cdots\succ \V^m$ be an arbitrary sequence of $m$ successive equitable refinements of $\{\Vside^0,\ldots,\Vside^{2k-1}\}$ with $|\V^i_h|=\t(i)$ for every $1 \le i \le m$ and $0 \le h \le 2k-1$, which exists as $n$ is large enough.
	Extending the notation $\V_x$ (above Definition~\ref{def:k-reg}), for every $0 \le x \le 2k-1$ we henceforth denote the restriction of the vertex partition $\V \prec \{\Vside^0,\ldots,\Vside^{2k-1}\}$ to $\Vside^x$ by $\V_x = \{V \in \V \,\vert\, V \sub \Vside^x\}$.
	For each edge $e=\{x,x+1,\ldots,x+k-1\} \in E(B)$ (here and henceforth when specifying an edge, the integers are implicitly taken modulo $2k$)
	apply Lemma~\ref{lemma:ind-k} with
	$$s,\,
	\Vside^{x},\Vside^{x+1},\ldots,\Vside^{x+k-1}
	\text{ and }
	\bigcup_{j=0}^{k-1}\V^{1}_{x+j}
	\succ\cdots\succ \bigcup_{j=0}^{k-1}\V^{m}_{x+j} \;.$$
	Let $H_e$ denote the resulting $k$-partite $k$-graph on $(\Vside^{x},\Vside^{x+1},\ldots,\Vside^{x+k-1})$.
	Note that $d(H_e) = 2^{-s}$.
	Moreover, denoting
	$$c = 2^{-9} \quad\text{ and }\quad K=\A_k(s)+1 \;,$$
	$H_e$ has the property that for every $\langle \d_k \rangle$-regular partition $\P'$ of $H_e$ and every $1 \le i < K$,
	\begin{equation}\label{eq:paste-property}
	\text{If $V_{x+h}(\P') \prec_{c} V_{x+h}(\V_i)$ for every $1 \le h \le k-1$, then $V_x(\P) \prec_{c} V_x(\V_{i+1})$.}
	\end{equation}	
	We construct our $3$-graph on the vertex set $\Vside:=\Vside^0 \cup\cdots\cup \Vside^{2k-1}$ as
	$E(H) = \bigcup_{e} E(H_e)$; that is, $H$ is the edge-disjoint union of all $2k$ $k$-partite $k$-graphs $H_e$ constructed above.
	Note that $H$ is a $k$-partite $k$-graph (on vertex classes $(\Vside^0 \cup \Vside^k,\, \Vside^1 \cup \Vside^{k+1},\ldots, \Vside^{k-1} \cup \Vside^{2k-1}))$ of density $\frac{2k}{2^k} 2^{-s} \ge 2^{-s-k}$, as needed.
	We will later use the following fact.
	\begin{prop}\label{prop:restriction}
		Let $\P$ be an $\langle 2^{-16^k} \rangle$-regular partition of $H$ with $\P^{(1)} \prec \{\Vside^0,\ldots,\Vside^{2k-1}\}$, and let
		$e \in E(B)$.
		Then the restriction $\P'$ of $\P$ to $V(H_e)$ is a $\langle \d_k \rangle$-regular partition of $H_e$.
	\end{prop}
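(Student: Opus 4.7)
The plan is a direct application of Claim~\ref{claim:restriction}, so the proof should be short; the only real work is verifying that $H_e$ really sits inside $H$ as an \emph{induced} sub-$k$-graph and that the factor lost in Claim~\ref{claim:restriction} is tolerable.

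First I will identify $H_e$ as an induced sub-$k$-graph of $H$. Write $e = \{x,x+1,\ldots,x+k-1\}$; the vertex classes of $H_e$ are $(\Vside^{x},\ldots,\Vside^{x+k-1})$, and each is contained in a distinct vertex class $\Vside^{j}\cup\Vside^{k+j}$ of $H$. Any edge of $H$ with one vertex in each of $\Vside^{x},\ldots,\Vside^{x+k-1}$ must come from some $H_{e'}$, whose vertex classes are $k$ consecutive $\Vside^{\bullet}$'s; the only such $e'$ is $e$ itself. Hence $H_e$ is exactly the sub-$k$-graph of $H$ induced on $(\Vside^x,\ldots,\Vside^{x+k-1})$.

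Next I compute the edge fraction. Since each $H_{e'}$ has density $2^{-s}$ on vertex classes of size $n$, we have $e(H_{e'}) = 2^{-s} n^k$ and $e(H) = 2k \cdot 2^{-s} n^k$, so $\beta := e(H_e)/e(H) = 1/(2k)$. The hypothesis $\P^{(1)} \prec \{\Vside^0,\ldots,\Vside^{2k-1}\}$ refines $\bigcup_{i=1}^k\{V_i,\,V_i\smallsetminus V_i'\}$ (since each element of the latter collection is a union of $\Vside^{\bullet}$'s), so the assumption of Claim~\ref{claim:restriction} is in force.

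Applying Claim~\ref{claim:restriction} with $\d = 2^{-16^k}$ and $\b = 1/(2k)$ yields that the restriction $\P'$ is $\langle 2k \cdot 2^{-16^k} \rangle$-regular on $H_e$. Finally, $\langle \cdot \rangle$-regularity is monotone in its parameter (a smaller parameter is a strictly stronger condition in Definitions~\ref{def:star-regular},~\ref{def:k-good},~\ref{def:k-reg} since both the error budget $\d \cdot e(G)$ and the subset thresholds scale with $\d$), so it suffices to verify $2k \cdot 2^{-16^k} \le 2^{-8^k} = \d_k$, i.e.\ $\log_2(2k) \le 16^k - 8^k = 8^k(2^k-1)$. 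For $k \ge 2$ the right side is at least $192$, while the left side is $O(\log k)$, so the inequality is slack.

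The only conceptual obstacle is the mismatch between the vertex partition structures of $H$ (classes of the form $\Vside^i\cup\Vside^{k+i}$) and of $H_e$ (individual $\Vside^j$'s); but this is precisely what the hypothesis $\P^{(1)}\prec\{\Vside^0,\ldots,\Vside^{2k-1}\}$ resolves. Everything else is bookkeeping, and the slack $16^k - 8^k$ in the exponents comfortably absorbs the $\log_2(2k)$ loss from the restriction.
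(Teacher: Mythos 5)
Your proposal is correct and follows exactly the paper's route: the paper's proof is the one-line observation that the claim is immediate from Claim~\ref{claim:restriction} together with $e(H_e)=\frac{1}{2k}e(H)$, and you have simply spelled out the induced-subgraph identification, the refinement hypothesis, and the arithmetic $2k\cdot 2^{-16^k}\le 2^{-8^k}=\d_k$ that the paper leaves implicit.
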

	\begin{proof}
		Immediate from Claim~\ref{claim:restriction} using the fact that $e(H_e) = \frac{1}{2k}e(H)$.
		%
	\end{proof}	
	
	Now, let $\P$ be an $\langle 2^{-16^k} \rangle$-regular partition of $H$
	with $\P^{(1)} \prec \V^1$.
	Our goal will be to show that
	\begin{equation}\label{eq:paste-goal}
	\P^{(1)} \prec_{c} \V^{K} \;.
	\end{equation}
	Proving~(\ref{eq:paste-goal}) would complete the proof, by setting $\V_0$ in the statement to be $\V^1$ here (notice $|\V^1|=k\t(1) = k2^{200}$ by~(\ref{eq:t})); 
	indeed, Claim~\ref{claim:refinement-size} would imply that
	$$|\P^{(1)}| \ge \frac12|\V^{K}| = \frac12 \cdot 2k \cdot \t(K)
	\ge \t(K)
	\ge \t(\A_k(s))
	\ge \A_k(s)
	\ge \Ack(s) \;,$$
	where the last inequality uses~$(\ref{eq:A_k})$.
	Assume towards contradiction that $\P^{(1)} \nprec_{c} \V^{K}$. By averaging,
	\begin{equation}\label{eq:assumption}
	\P^{(1)}_h \nprec_c \V^{K}_h \text{ for some } 0 \le h \le 2k-1.
	\end{equation}
	For each $0 \le h \le 2k-1$ let $1 \le \b(h) \le K$ be the largest integer satisfying $\P^{(1)}_h \prec_c \V^{\b(h)}_h$,
	which is well defined since $\P^{(1)}_h \prec_c \V^1_h$ (in fact $\P^{(1)} \prec \V^1$).
	Put $\b^* = \min_{0 \le h \le 2k-1} \b(h)$, and note that by~(\ref{eq:assumption}),
	\begin{equation}\label{eq:paste-star}
	\b^* < K \;.
	\end{equation}
	Let $0 \le x \le 2k-1$ minimize $\b$, that is, $\b(x)=\b^*$.
	Therefore:
	\begin{equation}\label{eqcontra}
	\P^{(1)}_{x+k-1} \prec_c \V^{\b^*}_{x+k-1},\,\ldots,\,\P^{(1)}_{x+1} \prec_c \V^{\b^*}_{x+1} \text{ and }
	\P^{(1)}_{x} \nprec_c \V^{\b^*+1}_{x}.	
	\end{equation}
	Let $e=\{x,x+1,\ldots,x+k-1\} \in E(B)$.
	Let $\P'$ be the restriction of $\P$ to $V(H_e)=\Vside^{x} \cup \Vside^{x+1} \cup\cdots\cup \Vside^{x+k-1}$, which is a $\langle \d_k \rangle$-regular partition of $H_e$ by Proposition~\ref{prop:restriction}. Since ${\P'}^{(x+h)}_{h}=\P^{(x+h)}_h$ for every $0 \le h \le k-1$ we get
	from~(\ref{eqcontra}) a contradiction to~(\ref{eq:paste-property}) with $i=\beta^*$.
	We have thus proved~(\ref{eq:paste-goal}) and so the proof is complete.
\end{proof}

\subsection{Deferred proofs: properties of $k$-partitions}\label{subsec:k-partitions-proofs}

\renewcommand{\K}{\mathcal{K}}

Henceforth, for a $(k-1)$-partite $(k-1)$-graph on $(V_1,\ldots,V_{k-1})$ and a disjoint vertex set $V$ we denote by $F \circ V$ the $k$-partite $k$-graph on $(V_1,\ldots,V_{k-1},V)$ given by
$$F \circ V := \{ (v_1,\ldots,v_{k}) \,\vert\, (v_1,\ldots,v_{k-1}) \in F \text{ and } v_{k} \in V \} \;.$$
%
We will use the following additional property of $k$-partitions.

\begin{claim}\label{claim:decomposition}
	Let $\P$ be a $(k-1)$-partition 
	with $\P^{(1)} \prec (\Vside^1,\ldots,\Vside^k)$,
	$F \in E_k(\P)$ and $V \in V_k(\P)$.
	Then there is a set of $k$-polyads $\{P_i\}_i$ of $\P$ such that
	\begin{equation}\label{eq:red-k-partitionP-gen}
	F \circ V = \bigcup_i \K(P_i) \,\text{ is a partition,
		with } P_i = (P_{i,1},\ldots,P_{i,k-1},F) \;.
	\end{equation}
\end{claim}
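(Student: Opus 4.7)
The plan is to read off each polyad $P_i$ directly from an edge of $F\circ V$. For each edge $e=(v_1,\ldots,v_{k-1},v)\in F\circ V$ and each $j\in\{1,\ldots,k-1\}$, the $(k-1)$-tuple $(v_1,\ldots,\hat v_j,\ldots,v_{k-1},v)$ obtained by deleting $v_j$ lies in $\Cross_{k-1}(\P^{(1)})$. By Definition~\ref{def:r-partition} it therefore belongs to a unique part of $\P^{(k-1)}$, which I would call $P_{e,j}$; note $P_{e,j}\in E_j(\P)$ since the tuple avoids $\Vside^j$. I would then set $P_e:=(P_{e,1},\ldots,P_{e,k-1},F)$, which is a $k$-polyad of $\P$, and let $\{P_i\}_i$ be the distinct polyads arising as $e$ ranges over $F\circ V$.

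The next step is to establish $F\circ V=\bigcup_i\K(P_i)$ as a partition. The inclusion $F\circ V\sub\bigcup_i\K(P_i)$ is immediate from $e\in\K(P_e)$. Disjointness is forced by the uniqueness built into $\under$: if an edge $e'$ lies in both $\K(P_i)$ and $\K(P_{i'})$, then for every $j\le k-1$ the $(k-1)$-tuple obtained from $e'$ by dropping its $j$-th coordinate lies in both $P_{i,j}$ and $P_{i',j}$, and since $\P^{(k-1)}$ is a partition this forces $P_{i,j}=P_{i',j}$ for all $j$, hence $P_i=P_{i'}$.

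The reverse inclusion $\K(P_i)\sub F\circ V$ is the one point that takes a moment of thought: for $(u_1,\ldots,u_k)\in\K(P_i)$, the clique condition at index $k$ gives $(u_1,\ldots,u_{k-1})\in F$ directly, so only $u_k\in V$ remains to verify. I would pick any witnessing edge $e=(v_1,\ldots,v_{k-1},v)\in F\circ V$ with $P_e=P_i$ and any $j\le k-1$; since $P_{i,j}=P_{e,j}$ contains the tuple $(v_1,\ldots,\hat v_j,\ldots,v_{k-1},v)$, the vertex class of $P_{i,j}$ in $\Vside^k$ must be the unique member of $V_k(\P)$ containing $v$, namely $V$. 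The clique condition for this $j$ then forces $u_k\in V$. I do not anticipate a genuine obstacle here: the whole argument is a bookkeeping exercise resting on the partition property of $\P^{(k-1)}$ and on the uniqueness of the underlying polyad, and the consistency of vertex classes across $F$ and the $P_{i,j}$'s is automatic because the coordinates of any witnessing $e$ already sit in prescribed parts of $\P^{(1)}$.
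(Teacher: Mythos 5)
Your argument is correct, and it takes a genuinely different route from the paper's. The paper proves the claim by induction on $k$, mirroring the recursive structure of a $k$-partition: in the induction step it writes $F \sub \K(G_1,\ldots,G_k)$ with the $G_j \in \P^{(k-1)}$, establishes the identity $F \circ V = \K(G_1\circ V,\ldots,G_k\circ V,\,F)$, decomposes each $G_j \circ V$ by the induction hypothesis, refines further using $\P^{(k)} \prec \K_k(\P)$, and finally takes the ``product'' of these decompositions. You instead read the polyad off each edge of $F\circ V$ directly, using only that $\P^{(k-1)}$ is a partition of $\Cross_{k-1}(\P^{(1)})$ whose parts have their vertex classes contained in single clusters of $\P^{(1)}$ (the latter is where the refinement condition of Definition~\ref{def:r-partition} implicitly enters, and it is also what makes $(P_{e,1},\ldots,P_{e,k-1},F)$ a legitimate $k$-polyad of $\P$). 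All three verifications go through as you describe: covering is immediate from $e\in\K(P_e)$, disjointness follows because $\P^{(k-1)}$ is a partition, and your observation that the $\Vside^k$-class of each $P_{i,j}$ is forced to equal $V$ is exactly the right way to close the reverse inclusion $\K(P_i)\sub F\circ V$. Your version is shorter and non-recursive; what the paper's induction buys is a construction of the decomposition level by level from the polyad structure underlying $F$, which keeps the proof aligned with how $k$-partitions are defined, but that extra structure is not needed for the statement itself.
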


\begin{proof}
	We proceed by induction on $\k \ge 2$, noting that the induction basis $\k=2$ is trivial since in this case
	$F=V' \in V_2(\P)$,
	hence $F \circ V = V' \times V$ is simply $\K(P)$ where $P$ is the $2$-polyad of $\P$ corresponding to the pair $(V',V)$.
	For the induction step assume the statement holds for $k \ge 2$ and let us prove it for $k+1$. Let $\P$ be a $k$-partition on $(\Vside^1,\ldots,\Vside^{k+1})$, let $F \in E_{k+1}(\P)$ and let $V \in V_{k+1}(\P)$,
	and denote the vertex classes of $\F$ by $(V_1,\ldots,V_k)$ with $V_j \sub \Vside^j$ for every $1 \le j \le k$.
	Recall that, by Definition~\ref{def:r-partition},
	$\P^{(\k)} \prec \K_{\k}(\P)$.
	Thus, $F \sub \K(G_1,\ldots,G_\k)$ with $G_j \in \P^{(\k-1)}$ for every $1 \le j \le \k$, where $G_j$ is a $(\k-1)$-partite $(\k-1)$-graph on $(V_1,\ldots,V_{j-1},V_{j+1},\ldots,V_\k)$.
	We have
	\begin{equation}\label{eq:decompose}
	F \circ V = \K(G_1 \circ V, \ldots,\, G_{\k} \circ V,\, F) \;;
	\end{equation}
	indeed, the  inclusion~$\sub$ follows from the fact that $F \sub \K(G_1,\ldots,G_\k)$, and the reverse inclusion~$\supseteq$ is immediate.
	Now, for every $1 \le j \le \k$, let $\P_j$ denote the restriction of $\P$ to the vertex classes $(\Vside^1,\ldots,\Vside^{j-1},\Vside^{j+1},\ldots,\Vside^\k,\Vside^{k+1})$
	and apply the induction hypothesis with the $(\k-1)$-partition $\P_j$, the $(k-1)$-graph $G_j$ and $V$. 
	It follows that there is a partition $G_j \circ V = \bigcup_i \K(P_{j,i})$ where each $P_{j,i}$ is a $\k$-polyad of $\P_j$ (and thus of $\P$) on $(V_1,\ldots,V_{j-1},V_{j+1},\ldots,V_\k,V)$.
	Since $\K_{\k}(\P_j) \succ \P_j^{(\k)}$, again by Definition~\ref{def:r-partition},
	for each $i$ and $j$ we have a partition $\K(P_{j,i}) = \bigcup_\ell F_{j,i,\ell}$ with $F_{j,i,\ell} \in \P_j^{(\k)}$, where each $F_{j,i,\ell}$ is a $\k$-partite $\k$-graph on $(V_1,\ldots,V_{j-1},V_{j+1},\ldots,V_\k,V)$.
	Summarizing, for every $1 \le j \le k$ we have the partition $G_j \circ V = \bigcup_{i,\ell} F_{j,i,\ell}$,
	and so it follows using~(\ref{eq:decompose}) that we have the partition
	$$
	F \circ V = \K\Big(\bigcup_{i,\ell} F_{1,i,\ell}, \ldots,\, \bigcup_{i,\ell} F_{\k,i,\ell},\, F \Big)
	= \bigcup_{\substack{i_1,\ldots,i_\k\\\ell_1,\ldots,\ell_\k}} \K(F_{1,i,\ell},\ldots,\,F_{\k,i,\ell},\, F) \;.
	$$
	As each $(\k+1)$-tuple $(F_{1,i,\ell},\ldots,\,F_{\k,i,\ell}, F)$ corresponds to a $(\k+1)$-polyad of $\P$, this completes the inductive step.
\end{proof}

Before proving Claim~\ref{claim:uniform-refinement} we will also need the following two easy claims.
\begin{claim}\label{claim:star-union}
	Let $G_1,\ldots,G_\ell$ be mutually edge-disjoint bipartite graphs on the same vertex classes $(Z,Z')$.
	If every $G_i$ is $\langle \d \rangle$-regular then $G=\bigcup_{i=1}^\ell G_i$ is also $\langle \d \rangle$-regular.
\end{claim}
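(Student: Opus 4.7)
The plan is to prove Claim~\ref{claim:star-union} by directly unfolding Definition~\ref{def:star-regular} and exploiting the linearity of edge counts over an edge-disjoint union. Concretely, I would fix arbitrary subsets $Z_1 \sub Z$ and $Z_1' \sub Z'$ with $|Z_1| \ge \d|Z|$ and $|Z_1'| \ge \d|Z'|$, and show that $d_G(Z_1, Z_1') \ge \tfrac{1}{2} d_G(Z, Z')$.

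The key observation is that since the graphs $G_1, \ldots, G_\ell$ are pairwise edge-disjoint and share the same bipartition $(Z, Z')$, for any $A \sub Z$ and $B \sub Z'$ we have $e_G(A, B) = \sum_{i=1}^\ell e_{G_i}(A, B)$. Dividing by the common normalization $|A||B|$ yields $d_G(A, B) = \sum_{i=1}^\ell d_{G_i}(A, B)$. So densities are additive along the decomposition.

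Applying $\langle\d\rangle$-regularity of each $G_i$ to the fixed pair $(Z_1, Z_1')$, whose sizes meet the $\d$-threshold required on $(Z, Z')$, gives $d_{G_i}(Z_1, Z_1') \ge \tfrac{1}{2} d_{G_i}(Z, Z')$ for every $i$. Summing these $\ell$ inequalities and invoking additivity of density on both sides produces
\[
d_G(Z_1, Z_1') \;=\; \sum_{i=1}^\ell d_{G_i}(Z_1, Z_1') \;\ge\; \tfrac{1}{2} \sum_{i=1}^\ell d_{G_i}(Z, Z') \;=\; \tfrac{1}{2} d_G(Z, Z'),
\]
which is precisely the $\langle\d\rangle$-regularity condition for $G$ on the chosen pair.

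There is no real obstacle here: the claim reduces to additivity of edge counts under edge-disjoint union, which commutes with the pointwise inequality provided by the hypothesis on each $G_i$. The only thing to be careful about is that the threshold $|Z_1| \ge \d|Z|$, $|Z_1'| \ge \d|Z'|$ is defined with respect to the ambient classes $(Z, Z')$, which are \emph{identical} for $G$ and for each $G_i$; hence the same pair $(Z_1, Z_1')$ automatically satisfies the regularity hypothesis for every $G_i$, and no averaging or union-bound argument is needed.
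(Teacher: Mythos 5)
Your proof is correct and is essentially identical to the paper's: both fix subsets meeting the $\d$-threshold, use additivity of edge counts (hence densities) over the edge-disjoint union, apply the $\langle\d\rangle$-regularity of each $G_i$, and sum the resulting inequalities.
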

\begin{proof}
	Let $S \sub Z$, $S' \sub Z'$ with $|S| \ge \d|Z|$, $|S'| \ge \d|Z'|$.
	Then
	$$d_G(S,S') = \frac{e_G(S,S')}{|S||S'|}
	= \sum_{i=1}^\ell \frac{e_{G_i}(S,S')}{|S||S'|}
	= \sum_{i=1}^\ell d_{G_i}(S,S')
	\ge \sum_{i=1}^\ell \frac12 d_{G_i}(Z,Z')
	= \frac12 d_{G}(Z,Z') \;,$$
	where the second and last equalities follows from the mutual disjointness of the $G_i$, and the inequality follows from the $\langle \d \rangle$-regularity of each $G_i$.
	Thus, $G$ is $\langle \d \rangle$-regular, as claimed.
\end{proof}

\begin{claim}\label{claim:refinement-union}
	If $\Q \prec_\d \P$ then there exist $P \in \P$ and $Q$ that is a union of members of $\Q$ such that $|P \triangle Q| \le 3\d|P|$.
\end{claim}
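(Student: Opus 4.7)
The plan is to define, for each $P \in \P$, a canonical candidate $Q_P$ that is a union of members of $\Q$, and then show that the \emph{total} symmetric difference $\sum_P |P \triangle Q_P|$ is at most $3\d n$, where $n$ is the size of the underlying set. Since $\sum_P|P| = n$, an averaging argument then supplies some $P \in \P$ with $|P \triangle Q_P| \le 3\d |P|$, as required.

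First I would set up the map. Since we use $\d$-refinements only with $\d < 1/2$, every $Q \in \Q$ that satisfies $Q \in_\d \P$ is contained in a unique $P \in \P$ up to a $\d$-fraction, so we get a well-defined map $f$ from $\Q_{\text{good}} := \{Q \in \Q \,\vert\, Q \in_\d \P\}$ to $\P$. For each $P \in \P$, define
\[
Q_P := \bigcup_{Q \in f^{-1}(P)} Q,
\]
which is a union of members of $\Q$, as required by the statement. Note that since $\Q$ is a partition, this is a disjoint union, so $|Q_P| = \sum_{Q \in f^{-1}(P)} |Q|$.

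Next I would bound the two halves of the symmetric difference separately, summed over $P$. For the ``excess'' part, $|Q_P \setminus P| \le \sum_{Q \in f^{-1}(P)} |Q \setminus P| \le \d \sum_{Q \in f^{-1}(P)} |Q|$ by the definition of $Q \sub_\d P$, so summing over $P$ gives $\sum_P |Q_P \setminus P| \le \d \sum_{Q \in \Q_{\text{good}}} |Q| \le \d n$. For the ``missing'' part, observe that the hypothesis $\Q \prec_\d \P$ gives $\sum_{Q \in \Q_{\text{good}}} |Q| \ge (1-\d)n$, hence $\sum_P |Q_P| \ge (1-\d)n$, while $\sum_P |Q_P \cap P| \ge \sum_P (|Q_P| - \d|Q_P|) \ge (1-\d)^2 n \ge (1-2\d)n$. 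Therefore $\sum_P |P \setminus Q_P| = n - \sum_P |P \cap Q_P| \le 2\d n$.

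Adding the two bounds yields $\sum_P |P \triangle Q_P| \le 3\d n = 3\d \sum_P |P|$, and so by averaging there must exist some $P \in \P$ with $|P \triangle Q_P| \le 3\d |P|$. The only subtle point, and essentially the entire content of the argument, is the use of uniqueness of $f$ (which is why the paper's standing convention $\d < 1/2$ is invoked) together with the ``double-count and average'' technique; there is no real obstacle, as the rest is book-keeping on unions of disjoint sets.
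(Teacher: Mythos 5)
Your proof is correct and follows essentially the same route as the paper's: both take for each $P$ the union of those $Q\in\Q$ with $Q\sub_\d P$, bound the total symmetric difference $\sum_P|P\triangle Q_P|$ by $3\d n$, and finish by averaging. The only cosmetic difference is that you bound the "missing" part $\sum_P|P\sm Q_P|$ by complementary counting via $\sum_P|P\cap Q_P|\ge(1-\d)^2n$, whereas the paper bounds it directly as $\sum_P\sum_{Q\nsubseteq_\d P}|Q\cap P|$; the two computations are equivalent book-keeping.
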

\begin{proof}
	For each $P\in \P$ let $\Q(P) = \{Q \in \Q \colon Q \sub_\d P\}$,
	and denote $P_\Q = \bigcup_{Q \in \Q(P)} Q$.
	We have
	\begin{align*}
	\sum_{P \in \P} |P \triangle P_\Q|
	&= \sum_{P \in \P} |P_\Q \sm P| + \sum_{P \in \P} |P \sm P_\Q|
	= \sum_{P \in \P} \sum_{\substack{Q \in \Q \colon\\Q \sub_\d P}} |Q \sm P|
	+ \sum_{P \in \P} \sum_{\substack{Q \in \Q \colon\\Q \nsubseteq_\d P}} |Q \cap P| \\
	&\le \sum_{P \in \P} \sum_{\substack{Q \in \Q \colon\\Q \sub_\d P}} \d|Q|
	+ \Big( \sum_{\substack{Q \in \Q\colon\\Q \notin_\d \P}} |Q|
	+ \sum_{\substack{Q \in \Q \colon\\Q \in_\d \P}} \d|Q| \Big)
	\le 3\d\sum_{Q \in \Q} |Q|
	= 3\d\sum_{P \in \P} |P| \;,
	\end{align*}
	where the last inequality uses the statement's assumption $\Q \prec_\d \P$ to bound the middle summand.
	By averaging, there exists $P \in \P$ such that $|P \triangle P_\Q| \le 3\d|P|$, thus completing the proof.
\end{proof}

\paragraph*{Proofs of properties.}
We are now ready to prove the properties of $k$-partitions stated at the beginning of Section~\ref{sec:LB}.

\renewcommand{\k}{r}

\begin{proof}[Proof of Claim~\ref{claim:uniform-refinement}]
	Put $\E = E_k(\P)$, and let us henceforth use $\k=k-1$. Since $\E \prec_\d \F$, Claim~\ref{claim:refinement-union} implies that there exist $F \in \F$ (an $\k$-partite $\k$-graph on $(\Vside^1,\ldots,\Vside^{\k})$), as well as an $\k$-partite $\k$-graph $F_\E$ that is a union of members of $\E$, such that $|F \triangle F_\E| \le 3\d|F|$.
	Denote by $\Q$ the $(\k-1)$-partition $\P'$ obtained by restricting $\P$ to $\bigcup_{i=1}^{\k} V_i$, and note that $\Q$ is $\langle \d \rangle$-good since $\Q \sub \P$ and, by assumption, $\P$ is $\langle \d \rangle$-good.
	Our goal is to prove that $\Q$ is a $\langle 3\d \rangle$-regular partition of $F$.
	Recalling Definition~\ref{def:k-reg}, note that it suffices to show, without loss of generality, that $E_{\k}(\Q) \cup V_{\k}(\Q)$ is a $\langle \d \rangle$-regular partition of the bipartite graph $G_{F}^{\k}$. 
	We have $|G_{F}^{\k} \triangle G_{F_\E}^\k|=|F \triangle F_\E| \le 3\d|F|=3\d|G_{F}^{\k}|$, that is, $G_{F_\E}^\k$ is obtained from $G_{F}^{\k}$ by adding/removing at most $3\d |G_{F}^{\k}|$ edges.
	Therefore, to complete the proof it suffices to show that for every $Z \in E_{\k}(\Q)$ and $Z' \in V_{\k}(\Q)$, the induced bipartite graph $G_{F_\E}^\k[Z,Z']$ is $\langle \d \rangle$-regular (recall Definition~\ref{def:star-regular}).
	
	Apply Claim~\ref{claim:decomposition} on the $(\k-1)$-partition $\Q$ with $Z$ and $Z'$. Since $\E \prec \K_{\r-1}(\Q)$ (recall Definition~\ref{def:r-partition}),
	this means that $Z \circ Z'$ is a (disjoint) union of members $E$ of $\E$ all underlied by $\k$-polyads of the form  $(P_1,\ldots,P_{\k-1},Z)$.
	Since $\Q$ is $\langle \d \rangle$-good (recall Definition~\ref{def:k-good}), for each such $E$ we in particular have that $G^\k_E[Z,Z']$ ($=G^\k_{E,\,\U(E)}$) is $\langle \d \rangle$-regular.
	It follows that $G_{F_\E}^\k[Z,Z']$ is a disjoint union of $\langle \d \rangle$-regular bipartite graphs on $(Z,Z')$.
	Claim~\ref{claim:star-union} thus implies that $G_{F_\E}^\k[Z,Z']$ is a $\langle \d \rangle$-regular bipartite graph. As explained above, this completes the proof.
\end{proof}

We end this subsection with the easy proof of Claim~\ref{claim:restriction}.
\begin{proof}[Proof of Claim~\ref{claim:restriction}]
	Recall Definition~\ref{def:k-reg}.
	Clearly, $\P'$ is $\langle \d \rangle$-good. We will show that $E_1(\P') \cup V_1(\P')$ is a $\langle \d/\b \rangle$-regular partition of $G^1_{H'}$.
	The argument for $G^i_{H'}$ for every $2 \le i \le k$ will be analogous, hence the proof would follow.
	Observe that $G^1_{H'}$ is an induced subgraph of $G^1_{H}$, namely, $G^1_{H'} = G^i_{H}[V_2' \times\cdots\times V_k',\, V_1']$.
	By assumption, $e(H') = \b e(H)$, and thus $e(G^1_{H'}) = \b e(G^1_{H})$.
	By the statement's assumption on $\P^{(1)}$ and since $E_1(\P) \cup V_1(\P)$ is a $\langle \d \rangle$-regular partition of $G^1_{H}$, we deduce---by adding/removing at most $\d e(G^1_{H}) =  (\d/\b)e(G^1_{H'})$ edges of $G^1_{H'}$---that $E_1(\P') \cup V_1(\P')$ is a $\langle \d/\b \rangle$-regular partition of $G_{H'}^1$.
	As explained above, this completes the proof.
\end{proof}




\section{Ackermann-type Lower Bounds for the R\"odl-Schacht Regularity Lemma}\label{sec:coro}


\renewcommand{\K}{\mathcal{K}}

The purpose of this section is to apply Theorem~\ref{theo:main} in order to prove Corollary~\ref{coro:RS-LB}, giving level-$k$ Ackermann-type lower bounds for the $k$-graph regularity lemma of R\"odl-Schacht~\cite{RodlSc07}.
We begin with the required definitions.
The definitions we state here are essentially equivalent to (though shorter than) those in~\cite{RodlSc07}.
We will rely on the definitions in Subsection~\ref{subsec:preliminaries}, and in particular, the definition of a $k$-partition.

For a $k$-graph $H$, the \emph{density} of a $(k-1)$-graph $S$ in $H$ is
$$d_H(S) = \frac{|H \cap \K(S)|}{|\K(S)|} \;,$$
%
%
where $d_H(S)=0$ if $|\K(S)|=0$.
The notion of $\e$-regularity for $k$-graphs is defined as follows.

\begin{definition}[$\e$-regular $k$-graph]\label{def:e-reg}
	A $k$-partite $k$-graph $H$ is \emph{$(\e,d)$-regular}---or simply \emph{$\e$-regular}---in a $k$-polyad $P$ with $H \sub \K(P)$
	if for every $S \sub P$ with $|\K(S)| \ge \e|\K(P)|$ we have $d_H(S) = d \pm \e$.
\end{definition}

A \emph{partition} of a $k$-graph $H$ is simply a $(k-1)$-partition on $V(H)$.

\begin{definition}[$\e$-regular partition]\label{def:e-reg-partition}
	A partition $\P$ of a $k$-graph $H$ is \emph{$\e$-regular} if 
	$\sum_P |\K(P)| \le \e|V(H)|^k$ where the sum is over all $k$-polyads $P$ of $\P$ for which
	$H \cap \K(P)$ is not $\e$-regular in $P$.	
\end{definition}

Henceforth, an \emph{$(r,a_1,\ldots,a_r)$-partition} is simply an $r$-partition $\P$ (recall Definition~\ref{def:r-partition}) where  $|\P^{(1)}|=a_1$ and for every $2 \le i \le r$,
$\P^{(i)}$ subdivides each $K \in \K_i(\P)$ into $a_i$ parts.

\begin{definition}[$f$-equitable partition]\label{def:r-equitable}
	Let $f\colon[0,1]\to[0,1]$.
	An $(r,a_1,\ldots,a_r)$-partition $\P$ is \emph{$f$-equitable} if $\P^{(1)}$ is equitable and for every $2 \le i \le r$, every $i$-graph $F \in \P^{(i)}$ is $(\e,1/a_i)$-regular in $\U(F)$, where $\e=f(d_0)$ and $d_0=\min\{1/a_2,\ldots,1/a_{r}\}$.
\end{definition}

\subsection{The lower bound}

The $k$-graph regularity of R\"odl-Schacht~\cite{RodlSc07} states, roughly, that for every $\e>0$ and every function $f\colon\N\to(0,1]$, every $k$-graph has an $\e$-regular $f$-equitable equipartition $\P$ where $|\P|$ is bounded by a level-$k$ Ackermann-type function.
In fact, R\"odl-Schacht's $k$-graph regularity lemma (Theorem~17 in~\cite{RodlSc07}) uses a considerably stronger notion of regularity of a partition that involves an additional function $r$ which we shall not discuss here (this stronger notion was crucial in~\cite{RodlSc07} for allowing them to prove a counting lemma).
Our lower bound below applies even to the weaker notion stated above, which corresponds to taking $r \equiv 1$.

The proof of  Corollary~\ref{coro:RS-LB} 
will follow quite easily from Theorem~\ref{theo:main} together with Claim~\ref{claim:k-reduction} below.
Claim~\ref{claim:k-reduction} basically shows that a $\langle \d \rangle$-regularity ``analogue'' of R\"odl-Schacht's notion of regularity implies graph $\langle \d \rangle$-regularity. 
%
It is essentially a generalization of a similar claim from~\cite{MS3}.
The proof of Claim~\ref{claim:k-reduction} is deferred to the Appendix~\ref{sec:RS-appendix}.
Henceforth we say that a graph partition is \emph{perfectly $\langle \d \rangle$-regular} if all pairs of distinct clusters are $\langle \d \rangle$-regular without modifying any of the graph's edges.
\begin{claim}\label{claim:k-reduction}
	Let $H$ be a $k$-partite $k$-graph on vertex classes $(\Vside^1,\ldots,\Vside^k)$, and
	let $\P$ be an $f$-equitable partition of $H$ with $\P^{(1)} \prec \{\Vside^1,\ldots,\Vside^k\}$,
	$f(x) = \d^4(x/2)^{2^{k+3}}$ and $|V(H)| \ge n_0(\d,|\P|)$.
	Suppose that for each $k$-polyad $P$ of $\P$, every $S \sub P$ with $|\K(S)| \ge \d|\K(P)|$ has $d_{H}(S) \ge \frac23 d_{H}(P)$.
	Then $E_k(\P) \cup V_k(\P)$ is a perfectly $\langle 2\sqrt{\d} \rangle$-regular 
	partition of $G_H^k$.
\end{claim}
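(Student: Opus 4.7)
The plan is to reduce the bipartite $\langle 2\sqrt\d\rangle$-regularity of $G_H^k[F,V]$ to the per-polyad density hypothesis via a polyad decomposition. Fix $F \in E_k(\P)$, $V \in V_k(\P)$, and arbitrary subsets $F' \sub F$, $V' \sub V$ with $\alpha := |F'|/|F| \ge 2\sqrt\d$ and $\beta := |V'|/|V| \ge 2\sqrt\d$, so $\alpha\beta \ge 4\d$. Writing $d = d(G_H^k[F,V])$, the goal is $d(G_H^k[F',V']) \ge d/2$, equivalently $|E(H)\cap(F'\circ V')| \ge \tfrac{d\alpha\beta}{2}|F||V|$.

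First I would apply Claim~\ref{claim:decomposition} to the $(k-1)$-partition $\P$, obtaining a partition $F \circ V = \bigcup_i \K(P_i)$ where each $P_i = (F_{i,1},\ldots,F_{i,k-1},F)$ is a $k$-polyad of $\P$ whose $k$-th face equals $F$. Restricting the $V_k$-vertex class to $V'$ and the $k$-th face to $F'$ gives sub-polyads $P_i'' \sub P_i$ such that $F' \circ V' = \bigcup_i \K(P_i'')$, and hence $|E(H)\cap(F'\circ V')| = \sum_i |E(H)\cap\K(P_i'')|$. Next, since $\P$ is $f$-equitable with $f(x)=\d^4(x/2)^{2^{k+3}}$, each partition face at every level $2 \le r \le k-1$ is $\e$-regular in its underlying polyad with $\e$ very small. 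A hierarchical counting lemma, applied level by level through the partition, then yields the key density-transfer estimate: for all but an aggregate fraction $\eta$ of the polyads $P_i$ (measured by $\sum_i |\K(P_i)|$),
$$|\K(P_i'')| = (1\pm o(1))\,\alpha\beta\,|\K(P_i)|,$$
with both $\eta$ and $o(1)$ driven far below $\d$ by the choice of $f$. Since $\alpha\beta\ge 4\d$, the non-exceptional polyads automatically satisfy $|\K(P_i'')|\ge\d|\K(P_i)|$---call them \emph{good}, and the rest \emph{bad} with $\sum_{\mathrm{bad}}|\K(P_i)|\le \eta|F||V|$.

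For each good polyad the density hypothesis yields $d_H(P_i'')\ge\tfrac23 d_H(P_i)$, whence $|E(H)\cap\K(P_i'')|\ge \tfrac23\alpha\beta(1-o(1))|E(H)\cap\K(P_i)|$. Summing over good $i$ and bounding the contribution of the bad polyads by the aggregate $|\K|$-weight $\eta|F||V|$,
$$|E(H)\cap(F'\circ V')| \ge \tfrac23\alpha\beta(1-o(1))(d-\eta)|F||V|,$$
and dividing by $|F'||V'|=\alpha\beta|F||V|$ gives $d(G_H^k[F',V'])\ge \tfrac23(1-o(1))(d-\eta)\ge d/2$ as soon as the errors are sufficiently small relative to $d$. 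The main obstacle lies here: $d$ is not controlled from below, so the errors $\eta$ and $o(1)$ must be driven quantitatively small uniformly in $d$. This is precisely the role of the aggressive choice $f(x)=\d^4(x/2)^{2^{k+3}}$: the exponent $2^{k+3}$ accommodates the compounded counting-lemma error through the $k-1$ hierarchical partition levels, the prefactor $\d^4$ provides additional slack in the $\d$-dependent constants, and the factor $\tfrac23$ in the hypothesis (giving room $\tfrac43$ above the target $\tfrac12$) absorbs the residual losses. Once this is executed, $G_H^k[F,V]$ is $\langle 2\sqrt\d\rangle$-regular for every $(F,V)$, which gives the claimed perfect partition regularity.
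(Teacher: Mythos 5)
Your setup (the reduction to counting $|H\cap(F'\circ V')|$ and the polyad decomposition via Claim~\ref{claim:decomposition}) matches the paper's, but the way you run the counting step opens a gap that the hypotheses cannot close. You allow an exceptional family of ``bad'' polyads with $\sum_{\mathrm{bad}}|\K(P_i)|\le\eta|F||V|$ and then need $\tfrac23(1-o(1))(d-\eta)\ge d/2$, i.e.\ $\eta\lesssim d$, where $d=d(G_H^k[F,V])$. But $d$ incorporates the relative density of $H$ inside the polyads, and the claim gives no lower bound on it: the hypothesis $d_H(S)\ge\tfrac23 d_H(P)$ is purely multiplicative and must remain usable when $d_H(P)$ is arbitrarily small, whereas $\eta$ is fixed by $f$, $\d$ and the partition parameters alone. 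No choice of $f$ rescues an additive error of this kind; you correctly flag the obstacle, but the proposed remedy (making $f$ aggressive) does not address it.

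The paper's proof avoids the issue by ensuring there are \emph{no} bad polyads. Every $P_i$ produced by Claim~\ref{claim:decomposition} is a polyad of the $f$-equitable partition $\P$, so the dense counting lemma (Fact~\ref{fact:counting}) applies to each one \emph{individually}, with the error $\g'$ calibrated against the \emph{known} clique density $d=\prod_j d_j^{\binom{k-1}{j-1}}$ determined by $(a_2,\ldots,a_{k-1})$, not against the density of $H$. A second point you gloss over: the restricted object $P_i''$ has its top face cut down to an \emph{arbitrary} subset $F'\sub F$, so it is not a regular complex and no counting lemma applies to it directly. The paper instead restricts the vertex class to $V'$ via the slicing lemma (Lemma~\ref{lemma:k-slice}), which preserves regularity of the complex, and then counts cliques through each edge of $F'$ separately using the ``moreover'' (extension) part of Fact~\ref{fact:counting}; the exceptional-edge error there is of order $\g'|F||V'|$ and is absorbed precisely because $\g'$ is chosen proportional to $\d\cdot d$ with $d$ the partition-level clique density. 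With these two corrections one obtains $|\K(Q_i)|\ge\d|\K(P_i)|$ for \emph{every} $i$, the density hypothesis applies to every polyad, and the conclusion follows by summing ratios exactly as you intend.
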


We now formally restate and prove Corollary~\ref{coro:RS-LB}.
%
We mention that, as will be immediate from the proof, our lower bound not only applies to the hypergraph regularity lemma of R\"odl and Schacht but also to the hypergraph regular approximation lemma~\cite{RodlSc07}.

\begin{theo}[Lower bound for R\"odl-Schacht's $k$-graph regularity lemma]\label{theo:RS-LB}
	Let $s \ge k \ge 2$ and
	put $c = 2^{-32^k}$.
	For every $s \in \N$ there exists a $k$-partite $k$-graph $H$ of density $p=2^{-s-k}$, and a partition $\V_0$ of $V(H)$ with $|\V_0| \le k 2^{200}$,
	such that if $\P$ is an $\e$-regular $f$-equitable partition of $H$ 	
	with
	$\e \le c p$,
	$f(x) \le c^4(x/2)^{2^{k+3}}$, $|V(H)| \ge n_0(k,|\P|)$
	and $\P^{(1)} \prec \V_0$, then $|\P^{(1)}| \ge \Ack_k(s)$.
\end{theo}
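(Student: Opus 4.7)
The plan is to reduce Theorem~\ref{theo:RS-LB} to Theorem~\ref{theo:main} via Claim~\ref{claim:k-reduction}. First I would take the $k$-partite $k$-graph $H$ and vertex partition $\V_0$ produced by Theorem~\ref{theo:main} for the given $k,s$; these already witness $d(H)=p=2^{-s-k}$ and $|\V_0|\le k\cdot 2^{200}$. Given any $\e$-regular $f$-equitable partition $\P$ of $H$ with $\P^{(1)}\prec \V_0$ satisfying the stated parameter bounds, it suffices to show that $\P$ is a $\langle 2^{-16^k}\rangle$-regular partition of $H$ in the sense of Definition~\ref{def:k-reg}, since Theorem~\ref{theo:main} then directly yields $|\P^{(1)}|\ge \Ack_k(s)$.

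Next I would clean up $H$ by passing to $H^*$: delete all edges of $H$ that lie in a $k$-polyad $P$ of $\P$ that is either not $\e$-regular in the sense of Definition~\ref{def:e-reg} or satisfies $d_H(P)<6\e$. Definition~\ref{def:e-reg-partition} bounds the first type by $\e|V(H)|^k$ deleted edges and the second by $6\e|V(H)|^k$, so $|E(H)\sd E(H^*)|\le 7\e|V(H)|^k$; since $e(H)$ is of order $p|V(H)|^k/k^k$, this loss is at most a constant times $\e k^k/p$ fraction of $e(H)$, hence tiny given $\e\le cp$ with $c=2^{-32^k}$.

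Then I would invoke Claim~\ref{claim:k-reduction} with the internal parameter taken to be $c$, noting that the stated bound $f(x)\le c^4(x/2)^{2^{k+3}}$ exactly matches the claim's hypothesis. For every surviving $k$-polyad $P$ and every $S\sub P$ with $|\K(S)|\ge c|\K(P)|\ge\e|\K(P)|$, the hypergraph $\e$-regularity of Definition~\ref{def:e-reg} forces $d_{H^*}(S)=d_H(S)=d_H(P)\pm\e$, and since $d_H(P)\ge 6\e$ this gives $d_{H^*}(S)\ge\frac{2}{3}d_{H^*}(P)$; emptied polyads satisfy the inequality trivially. Claim~\ref{claim:k-reduction} then yields that $E_k(\P)\cup V_k(\P)$ is a perfectly $\langle 2\sqrt{c}\rangle$-regular partition of $G_{H^*}^k$. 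Absorbing the $\e k^k/p$ edge discrepancy between $G_{H^*}^k$ and $G_H^k$ into the modification slack of Definition~\ref{def:star-regular} turns this into a $\langle\d_0\rangle$-regular partition of $G_H^k$ with $\d_0\le 2\sqrt{c}+7\e k^k/p\le 2^{-16^k}$ by a direct computation with $c=2^{-32^k}$. A symmetric permutation argument handles every direction $1\le i\le k$, giving condition (ii) of Definition~\ref{def:k-reg}. For condition (i) (the $\langle\d_0\rangle$-good property), I would apply the same procedure one level down: for each $2\le r\le k-1$ and each $F\in\P^{(r)}$, the $(f(d_0),1/a_r)$-regularity of $F$ in its underlying polyad $\U(F)$ given by $f$-equitability, together with the aggressive choice of $f$, allows an invocation of Claim~\ref{claim:k-reduction} at level $r$ to deduce $\langle\d_0\rangle$-regularity of each auxiliary graph $G_{F,\U(F)}^i$.

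Combining, $\P$ is a $\langle\d_0\rangle$-regular partition of $H$, and Theorem~\ref{theo:main} forces $|\P^{(1)}|\ge \Ack_k(s)$. The main obstacle is a careful parameter chase: one must thread the chain $f\to\e\to c\to\d_0$ through each of the $k-1$ hypergraph levels and through the translation from the hypergraph notion of Definition~\ref{def:e-reg} to the graph notion $\langle\cdot\rangle$ of Definition~\ref{def:star-regular}, with the square-root loss in Claim~\ref{claim:k-reduction} and the $k$-fold repetition forcing the double-exponential scaling $c=2^{-32^k}$.
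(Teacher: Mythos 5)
Your proposal is correct and follows essentially the same route as the paper: reduce to Theorem~\ref{theo:main} by showing $\P$ is $\langle 2^{-16^k}\rangle$-regular, clean $H$ by discarding edges in $\e$-irregular or low-density polyads, invoke Claim~\ref{claim:k-reduction} for the top level, and reuse the same claim at each level $2\le r\le k-1$ to establish $\langle\d\rangle$-goodness. The only (immaterial) difference is the parameter bookkeeping: you feed $\d=c$ into Claim~\ref{claim:k-reduction} and then observe $2\sqrt{c}\le 2^{-16^k}$, whereas the paper sets $\d=(\a_k/2)^2$ so that $2\sqrt{\d}=\a_k$ exactly and checks $c\le\d$.
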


\begin{remark}
	One can easily remove the assumption $\P^{(1)} \prec \V_0$ by replacing $\P$ with its appropriate intersection
	with $\V_0$. Since $|\V_0|=O(k)$ this has only a minor effect on the parameters of $\P$ and thus
	one gets essentially the same lower bound. We omit the details of this routine transformation.
\end{remark}


\begin{proof}[Proof of Theorem~\ref{theo:RS-LB}]
	Put $\a_k = 2^{-16^k}$ (recall $c = 2^{-32^k}$).
	The bound $|\P^{(1)}| \ge \Ack_k(s)$ would follow from Theorem~\ref{theo:main} if we show that $H$ is $\langle \a_k \rangle$-regular relative to the $(k-1)$-partition $\P$.
	Henceforth put $\d=(\a_k/2)^2$. We will later use the inequalities
	\begin{equation}\label{eq:RS-LB-ineq}
	c \le \frac{\a_k^k}{7k^k} \le \d \;.
	\end{equation}
	First we claim that $\P$ is $\langle \a_k \rangle$-good (recall Definition~\ref{def:k-good}).
	Let $2 \le r \le k-1$, let $F \in \P^{(r)}$ be an $r$-partite $r$-graph on $(V_1,\ldots,V_r)$, and denote by $P=\U(F)$ the $r$-polyad underlying $F$.	
	We will show that the bipartite graph $G_{F,P}^r$ is $\langle \a_k \rangle$-regular, and since an analogous argument will hold for $G_{F,P}^i$ for every $1 \le i \le r$, this would prove our claim.
	Recalling Definition~\ref{def:aux}, we have that $G_{F,P}^r = G_F^r[E,V_r]$ with $E:=P[V_1,\ldots,V_{r-1}] \in E_r(\P)$.
	Now, suppose $\P$ is a $(k-1,a_1,\ldots,a_{k-1})$-partition, put $d_r = 1/a_r$ for each $2 \le r \le k-1$, and put $d_0=\min\{1/a_2,\ldots,1/a_{k-1}\}$. 
	Recalling Definition~\ref{def:r-equitable}, since $\P$ is $f$-equitable we have that $F$ is $(f(d_0),d_r)$-regular in $P$. Thus, recalling Definition~\ref{def:e-reg}, for every $S \sub P$ with $|K(S)| \ge \d|K(P)|$ (note $\d \ge c \ge f(d_0)$ using~(\ref{eq:RS-LB-ineq})) we have $d_F(S) \ge d_r-f(d_0) \ge d_F(P) - 2f(d_0) \ge \frac23 d_F(P)$.
	Let the $(r-1)$-partition $\P'$ be obtained by restricting $\P$ to $V_1 \cup\cdots \cup V_r$ (so in particular $V_i(\P')=\{V_i\}$), and note that $P$ is an $r$-polyad of $\P'$.
	Observe that $d_F(S) \ge \frac23 d_F(P)$ trivially holds for any $r$-polyad $P'$ of $\P'$ other than $P$ as well, since $d_F(P')=0$ as $F \sub \K(P)$.
	Apply Claim~\ref{claim:k-reduction}, with (the almost trivial choice of) the $r$-partite $r$-graph $F$ and the $f$-equitable $(r-1)$-partition $\P'$ of $F$, to deduce that $E_{r}(\P') \cup \{V_r\}$ 
	is a perfectly $\langle\a_k\rangle$-regular (i.e., $\langle 2\sqrt{\d} \rangle$-regular) partition of $G_{F}^r$.
	Since $E \in E_{r}(\P')$, this in particular implies that $G_F^r[E,V_r]$ is $\langle\a_k\rangle$-regular, which proves our claim as explained above.
	
	
	%
	
	It remains to show that $H$ is $\langle \a_k \rangle$-regular relative to the  $\langle \a_k \rangle$-good $(k-1)$-partition $\P$.
	Let $H'$ be obtained from $H$ by removing all its ($k$-)edges underlied by $k$-polyads of $\P$ such that either $d_H(P) \le 6\e$ or the $k$-graph $H \cap \K(P)$ is not $\e$-regular in $P$.
	By Definition~\ref{def:e-reg-partition}, the number of edges removed from $H$ to obtain $H'$ is at most
	$$\e|V(H)|^k + 6\e|V(H)|^k \le 7\cdot c p |V(H)|^k
	\le (\a_k p/k^k)|V(H)|^k = \a_k\cdot e(H) \;,$$
	where the inequalities use the statement's assumption on $\e,c$ and~(\ref{eq:RS-LB-ineq}), and the equality uses the fact that all $k$ vertex classes of $H$ are of the same size (see Remark~\ref{remark:main}).
	Thus, in $H'$, every non-empty $k$-polyad of $\P$ is $\e$-regular and of density at least $6\e$.
	Again by Definition~\ref{def:e-reg-partition}, for every $k$-polyad $P$ of $\P$ and every $S \sub P$ with $|\K(S)| \ge \d |\K(P)|$ ($\ge \e |\K(P)|$
	by~(\ref{eq:RS-LB-ineq})) we have $d_H(S) \ge d_H(P)-2\e \ge \frac23 d_H(P)$.
	Apply Claim~\ref{claim:k-reduction}, this time with $H$ and $\P$.
	It follows that 
	$E_{k}(\P) \cup V_{k}(\P)$
	is an $\langle \a_k \rangle$-regular 
	partition of $G_H^k$. An analogous argument applies for $G_H^i$ for every $1 \le i \le k$, thus completing the proof.
\end{proof}

\section{$\langle \d \rangle$-regularity does not suffice for triangle counting}\label{sec:example}

Here we construct for every fixed $\delta>0$ and small enough $p$, a graph of density $p$ which is $\langle \d \rangle$-regular yet does not
contain the expected number of triangles 
(actually, the example is going to be triangle free).
This shows that  $\langle \d \rangle$-regularity, even just for graphs, is strictly weaker than Szemer\'edi's regularity.
The precise statement is the following.

\begin{prop}\label{claim:example}
For every $0 < p \le 10^{-3}\d^5$ and large enough $n$ there is a $n$-vertex tripartite graph of density at least $p$, whose every pair of classes span a $\langle \d \rangle$-regular graph, and yet is triangle free.
\end{prop}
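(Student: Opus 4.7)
The plan is to exploit the one-sided nature of $\langle \d \rangle$-regularity. The defining inequality $d_G(A', B') \ge \tfrac{1}{2}\, d_G(A, B)$ in Definition~\ref{def:star-regular} becomes vacuous when $d_G(A, B) = 0$, so the empty bipartite graph is $\langle \d \rangle$-regular for free. This suggests constructing a tripartite graph whose third bipartite pair is deliberately empty: triangles then cannot exist (every triangle uses an edge from each of the three pairs), and regularity on the empty pair is automatic.

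Concretely, I would partition the vertex set into three classes $A, B, C$ of size $n/3$ each, let $G_{AB} \sub A \times B$ and $G_{BC} \sub B \times C$ be independent random bipartite graphs with each edge included with probability $q = \Theta(p)$ (chosen so that the resulting tripartite graph $G := G_{AB} \cup G_{BC}$ has density at least $p$ under whichever convention is in use; for instance $q = 9p/4$ gives $e(G) \ge p\binom{n}{2}$), and set $G_{AC} = \emptyset$.

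Triangle-freeness and $\langle \d \rangle$-regularity of the $(A, C)$-pair are by construction. For the two random pairs, a standard Chernoff-plus-union-bound argument shows that, with high probability, every $A' \sub A, B' \sub B$ with $|A'|, |B'| \ge \d n/3$ satisfies $d(A', B') = q(1 \pm \tfrac{1}{2})$, while $d(A, B)$ concentrates tightly at $q$, so the inequality $d(A', B') \ge \tfrac{1}{2} d(A, B)$ holds. The threshold for this concentration is $n \gtrsim \log(1/\d)/(q\d^2)$, which under the assumption $p \le 10^{-3}\d^5$ is polynomial in $1/\d$ and is easily absorbed into ``large enough $n$''.

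The only conceptual point --- and essentially the entire content of the proposition --- is that $\langle \d \rangle$-regularity, being one-sided, does not preclude $d(A, C) = 0$ from coexisting with $d(A, B), d(B, C) \approx p$. Unlike Szemer\'edi's $\e$-regularity, which would force $|d(A', B') - d(A, B)| \le \e$ and so in particular prevent $d(A, B) = 0$ being paired with dense subgraphs, the one-sidedness here permits it freely. There is no real technical obstacle beyond the routine concentration step; the simple existence of this construction defeats any triangle counting lemma trying to deduce $\sim p^3 n^3$ triangles from overall density $p$ alone.
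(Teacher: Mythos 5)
Your construction is literally consistent with the wording of the proposition, but it misses its content, and the miss is not a technicality. With $G_{AC}=\emptyset$ the three pairwise densities are $q,q,0$, so the quantity any triangle counting lemma would predict, namely $d(A,B)\,d(B,C)\,d(A,C)\,|A||B||C|$, is itself zero; a triangle-free graph is then exactly what counting predicts, not a counterexample to it. Worse, the same trick works verbatim for Szemer\'edi's two-sided $\e$-regularity (empty and random bipartite graphs are both $\e$-regular for every $\e$), so your example cannot witness that $\langle\d\rangle$-regularity is \emph{strictly weaker} than the usual notion, which is the whole point of Proposition~\ref{prop:counter}. The one-sidedness of Definition~\ref{def:star-regular} is indeed the right culprit, but it must be exploited with all three pairs dense: the actual failure mode is that $\langle\d\rangle$-regularity tolerates deleting a small \emph{fraction} of the edges from every large sub-pair (densities may drop by a factor $1-\d$ without ever violating the one-sided threshold $\ge\frac12 d(A,B)$), and a sufficiently sparse random graph has so few triangles that one deletion per triangle is such a fraction.

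That is what the paper does. It takes a random tripartite graph with classes of size $k$ and all three pairs of density $q=3p$; the hypothesis $p\le 10^{-3}\d^5$ is precisely what makes the window $64\d^{-2}q^{-1}\le k\le\frac14\d^3 q^{-2}$ nonempty, so that the expected triangle count $k^3q^3$ is at most $\d^3k^2q$, a $\d^3$-fraction of the edges of each pair. Chebyshev controls the triangle count, Chernoff gives $d(S,T)=(1\pm\frac13\d)q$ for all $|S|\ge\d k$, $|T|\ge\d k$; one then deletes one edge per triangle and checks that every large sub-pair still has density at least $(1-\d)d(V_a,V_b)$ --- in particular each pair keeps density at least $p>0$ --- and finally blows up to reach arbitrarily large $n$, using the convex-combination lemma to transfer the regularity property to the blow-up. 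Your Chernoff step is fine and reappears there, but the essential ingredients you are missing are the sparsity window for $k$ (the only place the hypothesis on $p$ is used) and the edge-deletion step, without which there is no way to make all three pairs simultaneously dense, $\langle\d\rangle$-regular, and triangle-free.
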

We use the following well-known lemma, where we denote by $\norm{v}_1$ the  $\ell_1$-norm of a vector $v$ (for a proof see, e.g., Lemma 4.3 in~\cite{KaliSh13}).
\begin{lemma}\label{lemma:convex}
	Every vector $x \in [0,1]^n$ is a convex combination of binary vectors $y \in \{0,1\}^n$ each with $\norm{y}_1=\norm{x}_1$.
%
%
%
	%
\end{lemma}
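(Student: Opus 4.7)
The plan is to prove the lemma by induction on the number of ``fractional'' coordinates of $x$, i.e.\ the coordinates lying strictly in the open interval $(0,1)$. Note first that for the statement to be meaningful we must have $\norm{x}_1 \in \N$, since any binary vector has integer $\ell_1$-norm; henceforth write $k = \norm{x}_1$. Set $f(x) := |\{i : x_i \in (0,1)\}|$, the number of fractional entries of $x$.

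For the base case $f(x) = 0$, all entries of $x$ lie in $\{0,1\}$, so $x$ is itself a binary vector with $\norm{x}_1 = k$, and the trivial convex combination $x = 1 \cdot x$ does the job. For the inductive step, assume $f(x) \ge 1$; a single fractional coordinate cannot make $\sum x_i$ an integer given that the other coordinates are integers, so in fact $f(x) \ge 2$. Pick two fractional coordinates $i \neq j$, and set
\[
\e_1 = \min(x_i,\, 1-x_j), \qquad \e_2 = \min(1-x_i,\, x_j),
\]
both strictly positive. Define the two ``tilted'' vectors $x^+$ and $x^-$ by
\[
x^+ = x - \e_1 e_i + \e_1 e_j, \qquad x^- = x + \e_2 e_i - \e_2 e_j,
\]
so each is still in $[0,1]^n$, each still satisfies $\norm{x^\pm}_1 = k$, and by the choice of $\e_1,\e_2$ at least one of coordinates $i,j$ becomes integral in $x^+$ (respectively in $x^-$). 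Hence $f(x^+), f(x^-) < f(x)$, so the induction hypothesis applies to both.

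It remains to observe the identity
\[
x = \frac{\e_2}{\e_1 + \e_2}\, x^+ + \frac{\e_1}{\e_1 + \e_2}\, x^-,
\]
which is a direct computation on each coordinate (the $i$-th and $j$-th coordinates match by the definitions of $\e_1,\e_2$, and all other coordinates agree in $x, x^+, x^-$). Substituting the convex decompositions of $x^+$ and $x^-$ given by the induction hypothesis expresses $x$ as a convex combination of binary vectors each of $\ell_1$-norm $k$, completing the induction.

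The only place care is needed is the very first observation that $f(x) \ge 2$ whenever $f(x) \ge 1$; this is where the hypothesis $\norm{x}_1 \in \N$ (inherited from the requirement that a matching binary vector exists) is used. Everything else is a routine ``two-coordinate swap'' argument, the discrete analogue of the fact that the hypersimplex $\{x \in [0,1]^n : \sum_i x_i = k\}$ is the convex hull of the indicator vectors of its $k$-element subsets. I do not foresee a genuine obstacle; the proof is short and elementary.
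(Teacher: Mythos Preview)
Your proof is correct. The paper does not actually prove this lemma; it simply cites Lemma~4.3 of Kalyanasundaram--Shapira~\cite{KaliSh13} for a proof, so there is no in-paper argument to compare against. Your two-coordinate swap induction on the number of fractional entries is the standard proof that the hypersimplex $\{x\in[0,1]^n:\sum_i x_i=k\}$ is the convex hull of the $k$-subset indicator vectors, and all steps check out (in particular the identity $x=\frac{\e_2}{\e_1+\e_2}x^++\frac{\e_1}{\e_1+\e_2}x^-$ is correct, and your observation that $f(x)\ge 1$ forces $f(x)\ge 2$ under the integrality assumption is exactly what is needed). Your remark that the statement only makes sense when $\norm{x}_1\in\N$ is well taken; this is implicit in how the lemma is applied in the paper.
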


We will also apply the following version of the Chernoff bound.
\begin{lemma}[Multiplicative Chernoff bound]\label{lemma:Chernoff}
	Let $X_1,\ldots,X_n$ be mutually independent Bernoulli random variables,
	and put $X=\sum_{i=1}^n X_i$, $\mu = \Ex[X]$.
	For every $\d \in [0,1]$ we have
	$$\Pr(X \neq (1 \pm \d)\mu) \le 2\exp\Big(-\frac13\d^2\mu\Big) \;.$$
\end{lemma}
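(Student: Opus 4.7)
The plan is to prove the bound via the classical exponential moment (Chernoff) method, handling the upper and lower tails separately and combining via a union bound.

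First I would fix a parameter $t > 0$ to be optimized later and apply Markov's inequality to the nonnegative random variable $e^{tX}$. Writing $p_i = \Pr(X_i=1)$ so that $\mu = \sum_i p_i$, the fact that $X_i \in \{0,1\}$ gives the exact identity $\Ex[e^{tX_i}] = 1 + p_i(e^t - 1)$, and the inequality $1+x \le e^x$ yields $\Ex[e^{tX_i}] \le \exp\bigl(p_i(e^t-1)\bigr)$. Independence of the $X_i$ then gives
\begin{equation*}
\Ex\bigl[e^{tX}\bigr] = \prod_{i=1}^n \Ex\bigl[e^{tX_i}\bigr] \le \exp\bigl(\mu(e^t - 1)\bigr).
\end{equation*}
Combined with Markov's inequality this yields, for the upper tail, $\Pr(X \ge (1+\delta)\mu) \le \exp\bigl(\mu(e^t-1) - t(1+\delta)\mu\bigr)$.

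Next I would optimize by choosing $t = \ln(1+\delta) \ge 0$, obtaining the standard intermediate bound
\begin{equation*}
\Pr\bigl(X \ge (1+\delta)\mu\bigr) \le \left( \frac{e^{\delta}}{(1+\delta)^{1+\delta}} \right)^{\!\mu}.
\end{equation*}
To convert this to the clean $\exp(-\delta^2\mu/3)$ form, I would verify the elementary numerical inequality $(1+\delta)\ln(1+\delta) - \delta \ge \delta^2/3$ for $\delta \in [0,1]$, e.g.\ by checking that both sides and their first derivatives agree at $\delta = 0$ and comparing second derivatives on $[0,1]$. For the lower tail I would run the symmetric argument with $t < 0$: applying Markov to $e^{-tX}$ (with $t>0$) and optimizing with $t = -\ln(1-\delta)$ yields $\Pr\bigl(X \le (1-\delta)\mu\bigr) \le \bigl(e^{-\delta}/(1-\delta)^{1-\delta}\bigr)^{\mu}$, and here the analogous inequality $(1-\delta)\ln(1-\delta) + \delta \ge \delta^2/2 \ge \delta^2/3$ on $[0,1]$ gives the same bound $\exp(-\delta^2\mu/3)$.

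Finally, a union bound over the two one-sided events produces the stated factor of $2$, completing the proof. The one step that requires genuine verification (rather than being a template application of the Chernoff method) is the numerical inequality $(1+\delta)\ln(1+\delta) - \delta \ge \delta^2/3$ on $[0,1]$; this is the only place where the constant $1/3$ enters, and the rest of the argument is purely mechanical. The constraint $\delta \in [0,1]$ in the hypothesis is used precisely to make the lower-tail bound meaningful (for $\delta = 1$ the event becomes $X \le 0$) and to validate this numerical inequality on the correct range.
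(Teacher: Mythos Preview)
Your proof is correct and is the standard exponential-moment derivation of the multiplicative Chernoff bound. The paper itself does not prove this lemma; it is stated as a well-known result and used as a black box in the proof of Proposition~\ref{claim:example}, so there is nothing to compare against.
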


\begin{proof}[Proof of Proposition~\ref{claim:example}]
	We will in fact construct a graph satisfying an even somewhat stronger property than $\langle \d \rangle$-regularity (namely, the constant $\frac12$ is Definition~\ref{def:star-regular} will be replaced by $1-\d$).
	Consider a random tripartite graph on vertex classes $(V_1,V_2,V_3)$, each of size $k$, obtained by independently retaining each edge with probability 
	$q:=3p$ $(\le 1)$, where $k$ is any integer satisfying
	\begin{equation}\label{eq:example-k}
	64\d^{-2}q^{-1} \le k
	\le \frac14\d^3 q^{-2} \;.
	\end{equation}
	Note that $k$ is well defined in~(\ref{eq:example-k}) by the statement's bound on $p$.
	%
	%
	Denoting by $X$ the random variable counting the triangles, one can easily check that
	$$\Exp[X] = k^3 q^3 \quad\text{ and }\quad
	\Var[X] \le k^3 q^3 + 3\binom{k}{2}k^2 q^5 \;.$$
	Chebyshev's inequality thus implies that
	$$\Pr[X \ge \d^3k^2 q] \le \frac{\Var[X]}{(\d^3k^2q - \Exp[X])^2}
	\le \frac{k^3q^3(1+\frac32 k q^2)}{k^4q^2(\d^3 - kq^2)^2}
	\le \frac{q}{k} \cdot 8\d^{-6}
	\le \frac18 q^2 \d^{-4}
	= \frac98 p^2 \d^{-4}
	< \frac12 \;,$$
	where the third inequality uses the upper bound $kq^2 \le \frac14 \d^3$ $(\le \frac14)$ from~(\ref{eq:example-k}), the fourth inequality uses the lower bound from~(\ref{eq:example-k}), and the last inequality uses the statement's bound on $p$.
	Next, by using Lemma~\ref{lemma:Chernoff}
	together with the union bound we deduce that 
	\begin{equation}\label{eq:example-property}
	\forall 1 \le a<b \le 3\,\,\,
	\forall S \sub V_a,\, T \sub V_b \text{ with } |S| \ge \d|V_a|,\, |T| \ge \d|V_b| \,\,\colon\,\,
	d(S,T) = \big(1\pm\frac13\d\big)q
	\end{equation}
	except with probability at most
	$$3 \cdot 2^{2k} \cdot 2\exp\Big(-\frac{1}{27}\d^2 qk^2\Big) \le \frac12 \;,$$
	where the inequality uses the lower bound $kq \ge 64\d^{-2}$ from~(\ref{eq:example-k}).
	We deduce from all of the above that there exists a tripartite graph that has $k$ vertices in each vertex class, at most $\d^3 k^2 q$ triangles and satisfies~(\ref{eq:example-property}).
	By removing an edge from each triangle, one-third of them from each of the three pairs of vertex classes, we obtain a triangle-free graph $G_0$ such that for every $a<b$ and subsets $S \sub V_a$, $T \sub V_b$
	with $|S|\ge\d|V_a|$, $|T| \ge \d|V_b|$ we have
	$$e(S,T) \ge \big(1-\frac13\d\big)q|S||T|-\frac13 \d^3k^2q \ge \big(1-\frac23\d\big)q|S||T|
	\ge \frac{1-\frac23\d}{1+\frac13\d}d(V_a,V_b)|S||T|
	\ge (1-\d)d(V_a,V_b)|S||T|$$
	where the first and third inequalities follows from the lower and upper bound in~(\ref{eq:example-property}), respectively.
	In particular, we deduce from the inequality $e(S,T) \ge (1-\frac23\d)q|S||T|$ above that $d(V_a,V_b) \ge \frac13 q=p$.
	Summarizing, $G_0$ is triangle free, has density at least $p$ and satisfies 
	\begin{equation}\label{eq:example-property2}
	\forall a<b\,\,
	\forall S \sub V_a,\, T \sub V_b \text{ with } |S| \ge \d|V_a|,\, |T| \ge \d|V_b| \,\,\colon\,\, d(S,T) \ge (1-\d)d(V_a,V_b) \;.
	\end{equation}

	To obtain from $G_0$ a graph on a large enough number of vertices we simply take a blow-up, replacing each vertex $v$ of $G_0$ by a set $G(v)$ of $m$ new vertices, for $m \in \N$ arbitrarily large. The resulting tripartite graph $G$ is clearly triangle free and of density $d(G_0) \ge p$.
	It thus remains to prove that any two vertex classes of $G$ span a bipartite graph satisfying the desired regularity property.
	Let $a<b$ and let $\overline{V_a},\overline{V_b}$ denote the vertex classes of $G$ corresponding to $V_a,V_b$.
	Note that $|V_a|=|V_b|=k$ and $|\overline{V_a}|=|\overline{V_b}|=mk$.
	Let $S \sub \overline{V_a}$, $T \sub \overline{V_b}$ with $|S| \ge \d|\overline{V_a}|$ and $|T| \ge \d|\overline{V_b}|$.
	We will show that 
	\begin{equation}\label{eq:example-goal}
	d(S,T) \ge (1-\d)d(\overline{V_a},\overline{V_b}) \;,
	\end{equation}
	which would complete the proof.
	Consider the two vectors $s,t \in [0,1]^k$ defined as follows:
	$$s=(|S \cap G(u)|/m)_{u \in \overline{V_a}} \quad\text{ and }\quad
	t=(|T \cap G(v)|/m)_{v \in \overline{V_b}} \;.$$
	Note that $\norm{s}_1 = |S|/m \ge \d k$ and $\norm{t}_1 = |T|/m \ge \d k$.
	By Lemma~\ref{lemma:convex} applied on $s$,
	$$s = \sum_i \a_i s_i \quad\text{ with }\quad s_i \in \{0,1\}^k,\, \norm{s_i}_1 = |S|/m \text{ and } \a_i \ge 0,\, \sum_i \a_i = 1 \;.$$
	By Lemma~\ref{lemma:convex} applied on $t$,
	$$t = \sum_j \b_j t_j \quad\text{ with }\quad t_j \in \{0,1\}^k,\, \norm{t_j}_1 = |T|/m \text{ and } \b_j \ge 0,\, \sum_j \b_j = 1 \;.$$		
	%
	%
	Let $A$ denote the $k \times k$ bi-adjacency matrix of $G_0[V_a,V_b]$.
	Observe that $e_G(S,T) = (ms)^T A (mt)$.
	Moreover, observe that for every $i,j$ we have that $s_i^T A t_j$ is the number of edges of $G_0$ between the subsets of $V_a,V_b$ corresponding to $s_i,t_j$, respectively. Note that these subsets are of size $\norm{s_i}_1,\norm{t_j}_1$, respectively, which are both at least $\d k$.
	Thus, by~(\ref{eq:example-property2}),	
	$$s_i^T A t_j \ge (1-\d)d(\overline{V_a},\overline{V_b})\norm{s_i}_1\norm{t_j}_1
	= (1-\d)d(\overline{V_a},\overline{V_b})|S||T|/m^2 \;.$$
%
	We deduce that
	\begin{align*}
	e(S,T) &= m^2 \cdot s^T A t
	= m^2\Big(\sum_i \a_i s_i \Big)^T A \Big(\sum_j \b_j t_j\Big)
	= m^2\sum_{i,j} \a_i\b_j s_i^T A t_j\\
	&\ge \Big(\sum_i \a_i \Big)\Big(\sum_j \b_j \Big)(1-\d) d(\overline{V_a},\overline{V_b})|S||T| = (1-\d)d(\overline{V_a},\overline{V_b})|S||T|\;.
	\end{align*}
%
	This gives~(\ref{eq:example-goal}) and thus completes the proof.
\end{proof}

\appendix

\section{Proof of Claim~\ref{claim:k-reduction}}\label{sec:RS-appendix}



\renewcommand{\K}{\mathcal{K}}

\subsection{Basic facts}

In order to prove Theorem~\ref{theo:RS-LB} we will need several auxiliary results and definitions.
We begin with the notion of complexes.
Henceforth, the \emph{rank} of a (not necessarily uniform)
hypergraph $P$ is $\max_{e \in P} |e|$.
For $r \ge 2$ we denote 
$P^{(r)} = \big\{e \in P \,\big\vert\, |e|=r\big\}$ and $P^{(1)}=V(P)$.

\begin{definition}[complex]
	A \emph{$k$-complex} $(k \ge 2)$ is a $k$-partite hypergraph $P$ of rank $k-1$ where $P^{(r)} \sub \K(P^{(r-1)})$ for every $2 \le r \le k-1$.
\end{definition}


\begin{definition}[$f$-regular complex]\label{def:complex}		
	Let $f \colon [0,1]\to[0,1]$. A $k$-complex $P$ on vertex classes $(V_1,\ldots,V_k)$ is \emph{$(f,d_2,\ldots,d_{k-1})$-regular}, or simply \emph{$f$-regular}, if
	for every $2 \le r \le k-1$ and every $r$ vertex classes $V_{i_1},\ldots,V_{i_r}$ we have that $P^{(r)}[V_{i_1},\ldots,V_{i_r}]$ is $(\e,d_r)$-regular in $P^{(r-1)}[V_{i_1},\ldots,V_{i_r}]$,
	where $\e=f(d_0)$ and $d_0=\min\{d_2,\ldots,d_{k-1}\}$.
	%
\end{definition}

Note that by using the notion of complexes one can equivalently define an $f$-equitable partition (recall Definition~\ref{def:r-equitable}) as follows;
	an $(r,a_1,\ldots,a_r)$-partition $\P$ is $f$-equitable if $\P^{(1)}$ is equitable and, if $r \ge 2$, every $r$-complex
	of $\P$
	is $(f,1/a_2,\ldots,1/a_r)$-regular.

We now state the \emph{dense counting lemma} of~\cite{RodlSc07-B} specialized to
complexes.
%
We henceforth fix the following notation for $k \ge 3$, $\g > 0$;
\begin{equation}\label{eq:DCL}
F_{k,\g}(x) := \frac{\g^3}{12}\Big(\frac{x}{2}\Big)^{2^{k+1}} \;.
\end{equation}
The statement we use below follows from combining Theorem~10 and Corollary~14 in~\cite{RodlSc07-B}, and generalized to the case where the vertex classes are not necessarily of the same size.
For a $k$-polyad $F$ and an edge $e \in F$, we denote the set of $k$-cliques in $F$ containing $e$ by $\K(F,e)=\{e' \in \K(F) \,\vert\, e \sub e'\}$.
For a $k$-complex $P$ we abbreviate $\K(P):=\K(P^{(k-1)})$.
\begin{fact}[Dense counting lemma for $k$-complexes]\label{fact:counting}
	Let $\g > 0$ and let $P$ be a $k$-complex $(k \ge 3)$
	that is $(F_{k,\g},d_2,\ldots,d_{k-1})$-regular with $n_i \ge n_0(\g,d_2,\ldots,d_{k-1})$ vertices in the $\ith$ vertex class.
	Then	
	$$|\K(P)| = (1 \pm \g)\prod_{i=2}^{k-1} d_i^{\binom{k}{i}} \cdot \prod_{i=1}^k n_i \;.$$
	Moreover, denoting $P^{(k-1)}=(P_1,\ldots,P_k)$, we have for all edges $e \in P_k$ but at most $\g|P_k|$ that
	$$|\K(P,e)| = (1 \pm \g)\prod_{i=2}^{k-1} d_i^{\binom{k-1}{i-1}} \cdot n_{k} \;.\footnote{In~\cite{RodlSc07-B}, the statement of the `moreover' part (Corollary 14, dense extension lemma) allows for $\g|P^{(k-1)}|$ exceptional edges in $P^{(k-1)}$ rather than only in $P_k$, which is nevertheless essentially equivalent to our statement. Furthermore, they allow for counting not only $k$-cliques, in which case they do not need all $P_i$ to be regular.}
	\footnote{To obtain the bound $F_{k,\g}$ from the proof of Corollary~14 in~\cite{RodlSc07-B} (with $h=k-1$ and $\ell=k$), one can verify that:
		\begin{itemize}
		\item $\e(\mathbf{\F},\g,d_0)$ in Theorem~13 can be bounded by $\g(d_0/2)^{|\F^{(h)}|}$, and so $\e(K_{k}^{(k-1)},\g,d_0) \le \g(d_0/2)^{2^k}$,
		\item $\b$ in Fact~15 can be bounded by $\g^3/4$,
		\item $\e_{GDCL}(\D(\F^{(h)},f),\frac{\b}{3},d_0)$ in the proof of Corollary~14 can be bounded by $\frac{\b}{3}(d_0/2)^{2^{k+1}}$, using the first item and the fact that $\D(\F^{(h)},f)$ has at most $2k-(k-1)=k+1$ vertices.
		\end{itemize}}$$
	%
\end{fact}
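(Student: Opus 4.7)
The plan is to prove Fact~\ref{fact:counting} by induction on the uniformity $k\ge 3$, following the standard slicing strategy in the hypergraph regularity framework.

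\emph{Base case} $k=3$: The complex $P$ is a tripartite graph on $(V_1,V_2,V_3)$ in which each bipartite pair $P^{(2)}[V_i,V_j]$ is $(F_{3,\g}(d_2),d_2)$-regular. One counts triangles by summing over $v\in V_3$: the $\e$-regularity of the pairs $(V_1,V_3)$ and $(V_2,V_3)$ forces $|N(v)\cap V_i|=(1\pm\e)d_2 n_i$ for $i=1,2$ for all but at most $2\e n_3$ vertices $v$. For such $v$, the sets $N(v)\cap V_1$ and $N(v)\cap V_2$ are large enough to invoke $(\e,d_2)$-regularity of the pair $(V_1,V_2)$, giving $(1\pm O(\e))d_2^3 n_1 n_2$ triangles through $v$. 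The atypical $v$ contribute at most $\e n_1 n_2 n_3$ triangles, and summing yields $(1\pm O(\e))d_2^3\prod n_i$. The factor $\g^3/12$ in $F_{3,\g}(d_2)$ is chosen precisely so this error is bounded by $\g$.

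\emph{Inductive step} $k-1\to k$: Given a $k$-complex $P$ satisfying the hypothesis, the count decomposes as $|\K(P)|=\sum_{e\in P_k}|\K(P,e)|$, since each $k$-clique has a unique $(k-1)$-face avoiding $V_k$. It therefore suffices to prove the ``moreover'' part: for all but $\g|P_k|$ edges $e\in P_k$, the number of extensions $|\K(P,e)|$ equals $(1\pm\g)\prod_{i=2}^{k-1} d_i^{\binom{k-1}{i-1}} n_k$. Fixing a typical $e\in P_k$, a vertex $v\in V_k$ extends $e$ to a $k$-clique iff each of the $k-1$ faces $(e\setminus u)\cup\{v\}$ lies in $P^{(k-1)}$ (the face $e$ itself is already in $P_k\subseteq P^{(k-1)}$). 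The fraction of such $v$ is handled in two steps: first, by applying the induction hypothesis to the $(k-1)$-complex on $V_1,\ldots,V_{k-1},V_k$ capturing the lower-order constraints on $v$, one gets the factor $\prod_{i=2}^{k-2}d_i^{\binom{k-1}{i-1}}n_k$ coming from the $i$-subsets of $e\cup\{v\}$ that contain $v$ for $i\le k-2$; second, $(\e,d_{k-1})$-regularity of $P^{(k-1)}$ applied $k-1$ times (once per face through $v$) adds the factor $d_{k-1}^{k-1}$. Multiplying gives the desired count.

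\emph{Main obstacle.} The principal difficulty lies in tracking the compounding errors through the iterated regularity applications (one per level from $2$ to $k-1$, and one per face within a level), and in tuning $F_{k,\g}$ so that the cumulative multiplicative error remains bounded by $\g$. Each slicing introduces an error roughly of order $\e/d_0^{O(1)}$, and the nested structure of the induction is what forces the doubly exponential shape $(x/2)^{2^{k+1}}$ in~(\ref{eq:DCL}): at each of the $k-2$ inductive steps the admissible regularity is squared through a defect Cauchy--Schwarz type inequality. The ``moreover'' statement additionally requires a second-moment estimate, applied to the function $e\mapsto |\K(P,e)|$ on $P_k$, to show that only $\g|P_k|$ edges are atypical; this uses the $\e$-regularity of $P^{(k-1)}$ over the underlying polyad structure. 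This is precisely the technical content of Theorem~10 and Corollary~14 of~\cite{RodlSc07-B}, whose proof extends routinely to the case of unequal vertex class sizes $n_i$ stated here.
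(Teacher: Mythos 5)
The paper does not actually prove this Fact: it is imported wholesale from R\"odl--Schacht, as the sentence preceding it says (``follows from combining Theorem~10 and Corollary~14 in~\cite{RodlSc07-B}, and generalized to the case where the vertex classes are not necessarily of the same size''), with the two footnotes serving only to verify that the explicit function $F_{k,\g}$ of~(\ref{eq:DCL}) is an admissible regularity requirement when one traces the constants through the proof of Corollary~14. So there is no in-paper argument to compare yours against; what you have written is an outline of the standard inductive proof of the dense counting lemma, which is indeed the route taken in the cited reference, and your accounting of the exponents (the identity $\binom{k-1}{i}+\binom{k-1}{i-1}=\binom{k}{i}$ reassembling the full count from $|P_k|$ times the extension count) is consistent.

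Two points where your sketch is looser than a proof. First, the reduction ``it therefore suffices to prove the moreover part'' is not quite right as stated: from $|\K(P)|=\sum_{e\in P_k}|\K(P,e)|$ you additionally need (a) an estimate for $|P_k|$ itself, which comes from the induction hypothesis applied to the $(k-1)$-complex underlying $P_k$ together with the $(\e,d_{k-1})$-regularity of $P_k$ in that polyad, and (b) a bound on the contribution of the up to $\g|P_k|$ atypical edges, each contributing at most $n_k$; both are routine but must be said, and the error budget in $F_{k,\g}$ has to absorb them. Second, in the moreover part, ``$(\e,d_{k-1})$-regularity of $P^{(k-1)}$ applied $k-1$ times, once per face through $v$'' is a heuristic rather than an argument: the $k-1$ faces through $v$ are not independent events, and making this rigorous is exactly the content of the generalized counting/extension lemma (the second-moment argument over $e\in P_k$) that you defer to~\cite{RodlSc07-B}. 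You flag this yourself in the final paragraph, so the gap is acknowledged rather than hidden; but as written the proposal is a proof sketch that ultimately leans on the same external reference the paper cites, which for the purposes of this paper is exactly the intended treatment.
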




We will also need a \emph{slicing lemma} for complexes.

\begin{lemma}[Slicing lemma for complexes]\label{lemma:k-slice}
	Let $P$ be a $k$-complex $(k\ge 3)$ on vertex classes $(V_1,\ldots,V_k)$ and let $V_k' \sub V_k$ with $|V_k'| \ge \d|V_k|$.
	If $P$ is $(f,\,d_2,\ldots,d_{k-1})$-regular with
	$f(x) \le \frac{\d}{2} F_{k-1,\frac14}(x)$
	and $|V(P)| \ge n_0(d_2,\ldots,d_{k-1})$
	then the induced $k$-complex $Q=P[V_1,\ldots,V_{k-1},V_k']$ is $(f^*,d_2,\ldots,d_{k-1})$-regular with $f^*=\frac{2}{\d} \cdot f$.
\end{lemma}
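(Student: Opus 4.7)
I plan to verify directly, level by level for $r = 2, 3, \ldots, k-1$, that $Q$ satisfies the regularity condition of Definition~\ref{def:complex} with parameters $(f^*, d_2, \ldots, d_{k-1})$. Set $d_0 = \min\{d_2, \ldots, d_{k-1}\}$, $\e = f(d_0)$ and $\e^* = f^*(d_0) = (2/\d)\e$. Fix a level $r$ and a tuple $(V_{i_1}, \ldots, V_{i_r})$ of vertex classes of $Q$. If $V_k'$ is not among them, the restriction of $Q$ to this tuple is identical to the restriction of $P$, so the required $(\e^*, d_r)$-regularity is inherited from $P$ (using $\e \le \e^*$). I may therefore assume without loss of generality that $V_{i_r} = V_k'$.

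For such a tuple, I will take an arbitrary sub-polyad $S$ of $Q^{(r-1)}[V_{i_1}, \ldots, V_{i_{r-1}}, V_k']$ with $|\K(S)| \ge \e^* \cdot |\K(Q^{(r-1)}[V_{i_1}, \ldots, V_{i_{r-1}}, V_k'])|$, view it equivalently as a sub-polyad of $P^{(r-1)}[V_{i_1}, \ldots, V_{i_{r-1}}, V_k]$ (with the same $|\K(S)|$), and use the fact that $Q^{(r)}$ and $P^{(r)}$ agree on the slice $V_{i_1} \times \cdots \times V_{i_{r-1}} \times V_k'$ to conclude $d_{Q^{(r)}}(S) = d_{P^{(r)}}(S)$. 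The $(\e, d_r)$-regularity of $P$ will then give $d_{P^{(r)}}(S) = d_r \pm \e \subseteq d_r \pm \e^*$, provided that $S$ crosses the $\e$-threshold inside the larger polyad. The whole argument thus reduces to the clique-count estimate
\begin{equation}\label{eq:plan-ratio}
|\K(Q^{(r-1)}[V_{i_1}, \ldots, V_{i_{r-1}}, V_k'])| \;\ge\; \tfrac{\d}{2} \cdot |\K(P^{(r-1)}[V_{i_1}, \ldots, V_{i_{r-1}}, V_k])|,
\end{equation}
because then $|\K(S)| \ge \e^*\cdot(\d/2)\cdot|\K(P^{(r-1)}[\ldots,V_k])| = \e\cdot|\K(P^{(r-1)}[\ldots,V_k])|$.

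For $r = 2$ the estimate \eqref{eq:plan-ratio} is immediate since the two sides are the corresponding complete bipartite graphs and the ratio is $|V_k'|/|V_k| \ge \d$. For $r \ge 3$ I will invoke the dense counting lemma (Fact~\ref{fact:counting}) with $\g = 1/4$ on two $r$-complexes: the one on $(V_{i_1}, \ldots, V_{i_{r-1}}, V_k)$ obtained by restricting $P$, and the one on $(V_{i_1}, \ldots, V_{i_{r-1}}, V_k')$ obtained by restricting $Q$. The first inherits $(F_{r,1/4}, d_2, \ldots, d_{r-1})$-regularity from $P$, using $f(d_0) \le (\d/2) F_{k-1,1/4}(d_0) \le F_{r,1/4}(d_0)$ by monotonicity of $F_{k,\g}$ in $k$ for $x \le 1$. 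The second inherits regularity at sub-tuples not involving $V_k'$ from $P$, and at sub-tuples involving $V_k'$, which lie at some strictly smaller level $j < r$, from the inductive hypothesis; here the key estimate is $f^*(d_0) = (2/\d) f(d_0) \le F_{k-1,1/4}(d_0) \le F_{r,1/4}(d_0)$. The counting lemma then expresses both sides of \eqref{eq:plan-ratio} as a common product $\prod_{j=2}^{r-1} d_j^{\binom{r}{j}} \cdot |V_{i_1}|\cdots|V_{i_{r-1}}|$ times $|V_k'|$ or $|V_k|$, up to a $(1 \pm 1/4)$-factor, so the ratio is at least $(3/5)(|V_k'|/|V_k|) \ge (3/5)\d \ge \d/2$, as required.

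The main delicate point is the interplay between the induction on $r$ and Fact~\ref{fact:counting}: the counting lemma is used at level $r$ to obtain regularity at that very level, and it already requires regularity at every strictly smaller level. The explicit calibration $f(x) \le (\d/2)F_{k-1,1/4}(x)$ is precisely chosen so that both $\e$ and the amplified $\e^*$ always lie below the counting-lemma threshold $F_{r,1/4}(d_0)$ for every $3 \le r \le k-1$, which is exactly what makes the simultaneous induction go through. The size hypothesis $|V(P)| \ge n_0(d_2, \ldots, d_{k-1})$ is absorbed into the size threshold of Fact~\ref{fact:counting}, as the shrunken class $V_k'$ of size $\ge \d|V_k|$ remains large enough provided $n_0$ implicitly accommodates the factor $1/\d$.
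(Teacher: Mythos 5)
Your proof is correct and is essentially the paper's argument: both reduce the statement to the clique-count ratio $|\K(Q\text{-polyad})| \ge \frac{\d}{2}|\K(P\text{-polyad})|$ via the dense counting lemma (Fact~\ref{fact:counting}) and then transfer the density condition, with the same calibration of $f$ against $F_{k-1,\frac14}$. The only difference is cosmetic: you induct bottom-up on the level $r$ within the single complex, whereas the paper inducts on the uniformity $k$ by peeling off vertex classes, but the work done at each stage is identical.
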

For the proof we will need the notation $P^{(\le i)} = \{e \in P \,\vert\, |e| \le i\}$ where $P$ is any hypergraph.
\begin{proof}
	We proceed by induction on $k$. We begin with the induction basis $k=3$.
	Let $P=(P_1,P_2,P_3)$ be an $(f,d_2)$-regular $3$-complex on vertex classes $(V_1,V_2,V_3)$, meaning that each bipartite graph $P_i$ (which is obtained from $P$ by removing $V_i$ and its adjacent edges)
	is $(\e,d)$-regular with $d=d_2$ and $\e=f(d)$.
	Denoting $Q=(Q_1,Q_2,Q_3)$,
	we will show that the bipartite graphs $Q_1=P_1[V_2,V_3']$ and $Q_2=P_2[V_1,V_3']$ are each $(\e/\d,\,d)$-regular.
	Since $f(x)/\d \le f^*(x)$, and since $Q_3=P_3$ is $(\e,d)$-regular by assumption, this would imply that $Q$ is $(f^*,d_2)$-regular, as needed.
	To prove that $Q_1$ is $(\e/\d,d)$-regular, let $S \sub V_2 \cup V_3'$ with $|\K(S)| \ge (\e/\d)|V_2||V_3'|$. Then $|\K(S)| \ge \e|V_2||V_3|$, hence $d_{Q_1}(S)=d_{P_1}(S) = d \pm \e$, as desired.
	Similarly, to prove that $Q_{2}$ is $(\e/\d,d)$-regular, let $S \sub V_1 \cup V_3'$ with $|\K(S)| \ge (\e/\d)|V_1||V_3'|$. Then $|\K(S)| \ge \e|V_1||V_3|$, hence $d_{Q_2}(S)=d_{P_2}(S) = d \pm \e$. This proves the induction basis.

	It remains to prove the induction step.
	Let $P$ be a $(k+1)$-complex on vertex classes $(V_1,\ldots,V_{k+1})$ and let $V_{k+1}' \sub V_{k+1}$ with $|V_{k+1}'| \ge \d|V_{k+1}|$, and suppose $P$ is $(f,d_2,\ldots,d_k)$-regular with
	\begin{equation}\label{eq:k-slice-f-bound}
	f(x) \le \frac{\d}{2} F_{k,\frac14}(x) \;.
	\end{equation}
	We need to show that the induced $(k+1)$-complex $Q=P[V_1,\ldots,V_k,V_{k+1}']$ is $(f^*,d_2,\ldots,d_k)$-regular.
	Put $d_0 = \min\{d_2,\ldots,d_{k-1}\}$,
	$P^{(k)}=(P_1,\ldots,P_{k+1})$ and $Q^{(k)}=(Q_1,\ldots,Q_{k+1})$.
	Let $i \in [k+1]$, and observe that the regularity assumption on $P$ translates to the following assumptions on $P_i$:
	\begin{enumerate}
		\item the $k$-complex $P_i^{(\le k-1)}$ is $(f,d_2,\ldots,d_{k-1})$-regular,
		\item the $k$-partite $k$-graph $P^{(k)}_i$ is $(f(d_0),\,d_k)$-regular in $P^{(k-1)}_i$.
	\end{enumerate}	
	To prove that $Q$ is $(f^*,d_2,\ldots,d_k)$-regular
	we need to show that $Q_i$ satisfies the following conditions:
	\begin{enumerate}
		\item the $k$-complex $Q_i^{(\le k-1)}$ is $(f^*,d_2,\ldots,d_{k-1})$-regular,
		\item the $k$-partite $k$-graph $Q^{(k)}_i$ is $(f^*(d_0),\,d_k)$-regular in $Q^{(k-1)}_i$.
	\end{enumerate}
	We henceforth assume $i \neq k+1$, since otherwise $Q_i=P_i$ and so the above conditions follow from the above assumptions together with the fact that $f(x) \le f^*(x)$.
	Apply the induction hypothesis with the $k$-complex $P_i^{(\le k-1)}$ and $V_{k+1}'$, using assumption~$(i)$, the fact that $f(x) \le \frac{\d}{2}F_{k-1,\frac14}(x)$ by~(\ref{eq:k-slice-f-bound}) and the statement's assumption on $|V(P)|$.
	It follows that the $k$-complex $Q_i^{(\le k-1)}=P_i^{(\le k-1)}[V_1,\ldots,V_{i-1},V_{i+1},\ldots,V_k,V_{k+1}']$ is $(f^*,d_2,\ldots,d_k)$-regular,
	thus proving condition~$(i)$.
	
	Apply Fact~\ref{fact:counting} (dense counting lemma) with $\g=1/2$ and the $k$-complex $P_i^{(\le k-1)}$, using assumption~$(i)$, the fact that $f(x) \le F_{k,\frac12}(x)$ by~(\ref{eq:k-slice-f-bound}) and the statement's assumption on $|V(P)|$, to deduce that
	$$|\K(P_i^{(k-1)})| \le \frac32\prod_{j=2}^{k-1} d_j^{\binom{k}{j}} \cdot \prod_{\substack{1 \le j \le k+1\colon\\j \neq i}} |V_j| \;.$$
	On the other hand, applying Fact~\ref{fact:counting} with $\g=1/4$ and the $k$-complex $Q_i^{(\le k-1)}$, using condition~$(i)$, the fact that $f^*(x) = \frac{2}{\d}f(x) \le F_{k,\frac14}(x)$ by~(\ref{eq:k-slice-f-bound})
	and the statement's assumption on $|V(P)|$, implies that
	\begin{equation}\label{eq:k-slice-count}
	|\K(Q_i^{(k-1)})| \ge \frac34\prod_{j=2}^{k-1} d_j^{\binom{k}{j}} \cdot \prod_{\substack{1 \le j \le k\colon\\j \neq i}} |V_j| \cdot \d |V_{k+1}| \ge \frac{\d}{2}|\K(P_i^{(k-1)})| \;.
	\end{equation}
	We now prove condition~$(ii)$.
	Let $S \sub Q_i^{(k-1)}$ satisfy $|\K(S)| \ge f^*(d_0)|\K(Q_i^{(k-1)})|$.
	Then $|\K(S)| \ge f(d_0)|\K(P_i^{(k-1)})|$ by~(\ref{eq:k-slice-count}).
	Therefore $d_{Q_i^{(k)}}(S)=d_{P_i^{(k)}}(S) = d_k \pm f(d_0)$, where the last equality uses assumption~$(ii)$.
	This proves condition~$(ii)$, thus completing the induction step and the proof.
\end{proof}

\subsection{Proof of Claim~\ref{claim:k-reduction}}

\begin{proof}
%
	Put $G=G_H^k$, $\d'=2\sqrt{\d}$, and let $E \in E_k(\P)$ and $V \in V_k(\P)$.
	Note that $E$ is a $(k-1)$-partite $(k-1)$-graph, and let $(V_1,\ldots,V_{k-1})$ denote its vertex classes. Thus, $V_j \sub \Vside^j$ for every $1 \le j \le k-1$, and $V \sub \Vside^k$.
	Moreover, let $E' \sub E$, $V' \sub V$ with $|E'| \ge \d'|E|$, $|V'| \ge \d'|V|$.
	%
	To complete the proof our goal is to show that
	$d_{G}(E',V') \ge \frac12 d_{G}(E,V)$ (recall Definition~\ref{def:star-regular}).

%
	
	Consider the following $k$-partite $k$-graph and subgraph thereof:
	$$K = \{ (v_1,\ldots,v_k) \,\vert\, (v_1,\ldots,v_{k-1}) \in E \text{ and } v_k \in V \} = E \circ V \;,$$
	$$K' = \{ (v_1,\ldots,v_k) \,\vert\, (v_1,\ldots,v_{k-1}) \in E' \text{ and } v_k \in V' \} = E' \circ V' \;.$$
	We claim that
	\begin{equation}\label{eq:red-k-d}
	d_{G}(E,V) = \frac{|H \cap K|}{|K|} \quad\text{ and }\quad d_{G}(E',V') = \frac{|H \cap K'|}{|K'|} \;.
	\end{equation}
	Proving~(\ref{eq:red-k-d}) would mean that to complete the proof it suffices to show that
	\begin{equation}\label{eq:red-k-goal}
	\frac{|H \cap K'|}{|K'|} \ge \frac12 \frac{|H \cap K|}{|K|}  \;.
	\end{equation}
	To prove~(\ref{eq:red-k-d}) first note that
	\begin{equation}\label{eq:red-k-prod}
	|K| = |E||V| \quad\text{ and }\quad |K'|=|E'||V'| \;.
	\end{equation}
	Furthermore,
	\begin{align*}
	e_G(E,V) &= \big|\big\{ \, ((v_1,\ldots,v_{k-1}),v_k) \in G \,\big\vert\, (v_1,\ldots,v_{k-1}) \in E,\, v_k \in V \, \big\}\big| \\
	&= \big|\big\{ \, (v_1,\ldots,v_{k-1},v_k) \in H \,\big\vert\, (v_1,\ldots,v_{k-1}) \in E,\, v_k \in V \, \big\}\big|
	= |H \cap K| \;,
	\end{align*}
	and similarly, $e_G(E',V')=|H \cap K'|$.
	Therefore, using~(\ref{eq:red-k-prod}), we indeed have
	$$d_G(E,V) = \frac{e_G(E,V)}{|E||V|}
	= \frac{|H \cap K|}{|K|}
	\quad\text{ and }\quad d_G(E',V') = \frac{e_G(E',V')}{|E'||V'|}
	= \frac{|H \cap K'|}{|K'|} \;.$$
	
	Having completed the proof of~(\ref{eq:red-k-d}),
	it remains to prove~(\ref{eq:red-k-goal}).
	By Claim~\ref{claim:decomposition} there is a set of $k$-polyads $\{P_i\}_i$ of $\P$ on $(V_1,\ldots,V_{k-1},V)$ such that
	\begin{equation}\label{eq:red-k-partitionP}
	K = \bigcup_i \K(P_i) \,\text{ is a partition, with } P_i = (P_{i,1},\ldots,P_{i,k-1},E) \;.
	\end{equation}
	For each $k$-polyad $P_i=(P_{i,1},\ldots,P_{i,k-1},E)$, let $P_i'$ be the induced $k$-polyad $P_i' = P_i[V_1,\ldots,V_{k-1},V']$. Write $P_i'=(P'_{i,1},\ldots,P'_{i,k},E)$,
	and let $Q_i$ be the $k$-polyad $Q_i=(P'_{i,1},\ldots,P'_{i,k-1},E')$.
	Note that $Q_i$ satisfies $\K(Q_i) = \K(P_i) \cap K'$.
	It therefore follows from~(\ref{eq:red-k-partitionP}) that
	\begin{equation}\label{eq:red-k-partitionQ}
	K' = \bigcup_i (\K(P_i) \cap K') = \bigcup_i \K(Q_i) \,\text{ is a partition.}
	\end{equation}
	%
	%
	%
	%
	Suppose $\P$ is a $(k-1,a_1,a_2,\ldots,a_{k-1})$-partition, and denote $d_j = 1/a_j$ and
	$$d = \prod_{j=2}^{k-1} d_j^{\binom{k-1}{j-1}} \;.$$
	Put $\g = \frac18\d'$ ($\le \frac18$, as otherwise there is nothing to prove). We  will next apply the dense counting lemma (Fact~\ref{fact:counting}) to prove the estimates:
	\begin{equation}\label{eq:red-k-tP}
	|\K(P_i)| \le (1+\g)d|K|
	\end{equation}
	and
	\begin{equation}\label{eq:red-k-tQ}
	|\K(Q_i)| \ge \big(1-\g)d|K'| \;.
	\end{equation}
	Note that proving these estimates would in particular imply the bound
	\begin{equation}\label{eq:red-k-threshold}
	|\K(Q_i)| \ge \d|\K(P_i)| \;;
	\end{equation}
	indeed, from the assumptions $|E'| \ge \d'|E|$, $|V'| \ge \d'|V|$ and~(\ref{eq:red-k-prod}) we have that $|K'| \ge (\d')^2|K|$,
	hence we deduce from~(\ref{eq:red-k-tP}) and~(\ref{eq:red-k-tQ}) the lower bound
	$$
	\frac{|\K(Q_i)|}{|\K(P_i)|}
	\ge \frac{1-\g}{1+\g}(\d')^2
	\ge \frac34 \cdot (2\sqrt{\d})^2 \ge \d \;,
	$$
	where we used the inequality
	\begin{equation}\label{eq:red-k-quotient}
	\frac{1-\g}{1+\g} \ge 1-2\g \ge \frac34 \;.
	\end{equation}
	
	In order to prove~(\ref{eq:red-k-tP}) and~(\ref{eq:red-k-tQ})
	we first need to introduce some notation.
	Put $m=n/a_1$ where $n=|V(H)|$ is the size of the vertex set, and put
	$$
	\g' = \frac12\g \d'd \,\,\Big(=\frac14\d d\Big), \quad d_0 = \min\{d_2,\ldots,d_{k-1}\} \;.$$	
	%
	Note that $d \ge d_0^{2^k}$.
	Using the statement's assumption on $f$ we have (recall~(\ref{eq:DCL}))
	\begin{equation}\label{eq:red-k-f}
	f(d_0) = \d^4\Big(\frac{d_0}{2}\Big)^{2^{k+3}}
	\le \frac{\d}{2^{6}} \cdot \Big(\frac14\d d_0^{2^k}\Big)^3 \Big(\frac{d_0}{2}\Big)^{2^{k+1}}
	\le \frac{\d}{2}\cdot\frac{\g'^3}{12}\Big(\frac{d_0}{2}\Big)^{2^{k+1}} = \frac{\d}{2}F_{k,\g'}(d_0) \;.
	\end{equation}
	In particular,
	\begin{equation}\label{eq:red-k-eps}
	f(d_0) \le \g' d_0 \le \g' d_{k-1} \;.
	\end{equation}
	%
%
%
%
	%
%
%
	Note that if $P$ is an $\ell$-polyad of $\P$, for any $2 \le \ell \le k$, then, since $\P$ is $f$-equitable, the unique $\ell$-complex of $\P$ containing $P$ is $(f,d_2,\ldots,d_{\ell-1})$-equitable.
	Applying Fact~\ref{fact:counting} (dense counting lemma) with $\g'$ implies, using the fact that $f(x) \le F_{k,\g'}(x) \le F_{\ell,\g'}(x)$ by~(\ref{eq:red-k-f}) and the statement's assumption on $n$, that
	\begin{equation}\label{eq:red-k-count}
	|\K(P)| = (1 \pm \g')\prod_{j=2}^{\ell-1} d_j^{\binom{\ell}{j}} \cdot m^\ell \;.
	\end{equation}
	%

	%
	Let $P_E$ be the unique $(k-1)$-polyad of $\P$ such that $E \sub K(P_E)$.
	Then $|E|=d_E(P_E)|\K(P_E)|$, and since $\P$ is $f$-equitable, $E$ is $(d_{k-1},f(d_0))$-regular in $P_E$. In particular, $|E| \ge (d_{k-1}-f(d_0))|\K(P_E)|$.
	By~(\ref{eq:red-k-count}),
	$$|\K(P_E)| \ge (1-\g')\prod_{j=2}^{k-2} d_j^{\binom{k-1}{j}} \cdot m^{k-1} \;.$$
	Thus,
	\begin{equation}\label{eq:red-k-E}
	|E| \ge (d_{k-1}-f(d_0))|\K(P_E)| \ge (1-\g')d_{k-1}|\K(P_E)| \ge 
	(1-2\g')\prod_{j=2}^{k-1} d_j^{\binom{k-1}{j}} \cdot m^{k-1} \;,\
	\end{equation}
	where the second inequality uses~(\ref{eq:red-k-eps}).
	Furthermore, for every $P_i$ as above we have (recall $P_i$ is a $k$-polyad of $\P$), again by~(\ref{eq:red-k-count}), that
	%
	$$
	|\K(P_i)| \le (1+\g')\prod_{j=2}^{k-1} d_j^{\binom{k}{j}} \cdot m^k
	= (1+\g')d\prod_{j=2}^{k-1} d_j^{\binom{k-1}{j}} \cdot m^k
	\le \frac{1+\g'}{1-2\g'}d|E||V|
	\le (1+4\g')d|K| \;,
	$$
	where the penultimate inequality uses~(\ref{eq:red-k-E}), and the last inequality uses~(\ref{eq:red-k-prod}) and the fact that $\g' \le \g \le \frac18$.
	%
	This proves~(\ref{eq:red-k-tP}).
	
	
	Next we prove~(\ref{eq:red-k-tQ}). Let $\overline{P_i}$ be the unique $k$-complex of $\P$ containing the $k$-polyad $P_i$, and let $\overline{P_i}'$ be the induced $k$-complex $\overline{P_i}'=\overline{P_i}[V_1,\ldots,V_{k-1},V']$.
	Apply Lemma~\ref{lemma:k-slice} (slicing lemma) on $\overline{P_i}$, using the fact that $|V'| \ge \d'|V|$ and $f(x) \le \frac{\d}{2}F_{k-1,\frac14}$ by~(\ref{eq:red-k-f}), to deduce that $\overline{P_i}'$ is $(\frac{2}{\d}f,\, d_2,\ldots,d_{k-1})$-regular.
	Let the $k$-complex $\overline{Q_i}$ be obtained from the $k$-complex $\overline{P_i}'$ by replacing $E$ $(=P[V_1,\ldots,V_{k-1}])$ with $E'$,
	and note that the $(k-1)$-uniform hypergraph $\overline{Q_i}^{(k-1)}$ is precisely the $k$-polyad $Q_i$.
	%
	Apply Fact~\ref{fact:counting} (dense counting lemma, the ``moreover'' part) with $\g'$ on $\overline{P_i}'$, using the fact that 
	$\frac{2}{\d}f(x) 
	\le F_{k,\g'}(x)$ by~(\ref{eq:red-k-f}) and the statement's assumption on $n$, to deduce that
	$$
	|\K(Q_i)| \ge |E'| \cdot (1-\g')d|V'| - \g'|E|\cdot|V'|
	\ge \big(1-\g' - \frac{1}{\d' d}\g'\big)d|E'||V'|
	\ge \big(1-\g)d|K'| \;,
	$$
	where the second inequality uses the assumption that $|E'|\ge \d'|E|$ and the third inequality uses~(\ref{eq:red-k-prod}).
	This proves~(\ref{eq:red-k-tQ}).

	Finally, recall that our goal is to prove~(\ref{eq:red-k-goal}).
	We have
	\begin{align*}
	|H \cap K| &= \sum_i |H \cap \K(P_i)|
	= \sum_i d_H(P_i) \cdot |\K(P_i)|
	\le (1+\g)|K| \cdot d\sum_i d_H(P_i) \;,
	\end{align*}
	where the first equality uses~(\ref{eq:red-k-partitionP}) and the inequality is by~(\ref{eq:red-k-tP}).
	Denoting $d'=d\sum_i d_H(P_i)$, this means that
	\begin{equation}\label{eq:red-k-dP}
	\frac{|H \cap K|}{|K|} \le (1 + \g)d' \;.
	\end{equation}
	Observe that for every $i$, the statement's assumption on $\P$
	implies, together with~(\ref{eq:red-k-threshold}), that
	\begin{equation}\label{eq:red-k-reg}
	d_H(Q_i) \ge \frac23 d_H(P_i) \;.
	\end{equation}
	We have
	\begin{align*}
	|H \cap K'| &= \sum_i |H \cap \K(Q_i)| =
	\sum_i d_H(Q_i) \cdot |\K(Q_i)| \\
	&\ge \sum_i \frac23 d_H(P_i) \cdot |\K(Q_i)|
	\ge \frac23(1-\g)|K'| \cdot d\sum_i d_H(P_i) \;,
	\end{align*}
	where the first equality uses~(\ref{eq:red-k-partitionQ}), the first inequality uses~(\ref{eq:red-k-reg}) and the second inequality uses~(\ref{eq:red-k-tQ}).
	This means that
	$$\frac{|H \cap K'|}{|K'|} \ge \frac23(1-\g)d' \ge
	\frac23\cdot\frac{1-\g}{1+\g}\frac{|H \cap K|}{|K|} \ge \frac12 \frac{|H \cap K|}{|K|} \;,$$
	where the second inequality uses~(\ref{eq:red-k-dP}) and the third inequality uses~(\ref{eq:red-k-quotient}).
	We have thus proved~(\ref{eq:red-k-goal}) and are therefore done.
\end{proof}

%

\end{document}